\newcommand\myshade{85}
\colorlet{mylinkcolor}{violet}
\colorlet{mycitecolor}{orange}
\colorlet{myurlcolor}{black}
\newtheorem{theorem}{Theorem}[section]
\newtheorem{lemma}[theorem]{Lemma}
\newtheorem{definition}[theorem]{Definition}
\let\olddefinition\definition
\renewcommand{\definition}{\olddefinition\normalfont}
\newtheorem{corollary}[theorem]{Corollary}
\newtheorem{proposition}[theorem]{Proposition}
\newtheorem*{claim}{Claim}
\newtheorem{remark}[theorem]{Remark}
\let\oldremark\remark
\renewcommand{\remark}{\oldremark\normalfont}
\newtheorem{observation}[theorem]{Observation}
\let\oldobservation\observation
\renewcommand{\observation}{\oldobservation\normalfont}
\newtheorem{question}[theorem]{Question}
\let\oldquestion\question
\renewcommand{\question}{\oldquestion\normalfont}
\newcommand{\CCC}{CAT(0) cube complex\xspace}
\newcommand{\intc}{intercross\xspace} 
\newcommand{\countc}{countercross\xspace}
\newcommand{\intcs}{intercrosses\xspace}
\newcommand{\countcs}{countercrosses\xspace}
\newcommand{\countcer}{countercrosser\xspace}
\newcommand{\countcers}{countercrossers\xspace}
\newcommand{\N}{\mathbb N}
\newcommand{\Link}{\mathrm{Link}}
\newcommand{\Int}[1]{\mathcal{#1}}
\newcommand{\adjP}{locally parallel\xspace}
\newcommand{\uc}[1]{\tilde{#1}} %homeomorphic
\newcommand{\gp}[1]{#1} %group
\newcommand{\mfld}[1]{#1} %manifold
\newcommand{\CC}[1]{\mathbf{#1}} %CAT(0) cube complex - X,Y
\newcommand{\CCv}[1]{\mathbf{#1}} %CCC vertex  -  x,y,z, m
\newcommand{\CCc}[1]{\mathbf{#1}} %CCC cube - C
\newcommand{\Hyp}[1]{\hat{\mathcal{#1}}} %collection of all hyps - H
\newcommand{\hyp}[1]{\hat{\mathfrak{#1}}} %hyperplane - h,k
\newcommand{\Hypmap}[1]{\hat{#1}_* } %map of Hyps - f
\newcommand{\ltame}{ ^{\rm{tame}}}
\newcommand{\lwild}{ ^{\rm{wild}}}
\newcommand{\Hs}[1]{\mathcal{#1}} %collection of all halfspaces - H
\newcommand{\hs}[1]{\mathfrak{#1}} %halfspace - h,k
\newcommand{\comp}[1]{{#1}^*} %complement of halfspace
\newcommand{\simp}[1]{#1} %Simplicial complex - K
\newcommand{\simpe}[1]{#1} %Simplicial complex - edge - e
\newcommand{\usimp}[1]{\tilde{#1}} %universal covering (UC) Simplicial complex - K
\newcommand{\usimpv}[1]{\tilde{#1}} %UC Simplicial complex - vertex - v
\newcommand{\trk}[1]{#1} %track - t
\newcommand{\ptrn}[1]{{\mathcal{#1}}} %pattern - P
\newcommand{\utrk}[1]{{\tilde{#1}}} %track in the UC - t
\newcommand{\uptrn}[1]{{\tilde{\mathcal{#1}}}} %pattern in the UC - P
\newcommand{\chainofhyps}[4]{\left((\hs{#1}_{#3},\hs{#2}_{#3}),\ldots,(\hs{#1}_{#4},\hs{#2}_{#4})\right)}
\newcommand{\chainoftuples}[4]{\left((\hs{#1}^{1}_{#3},\dots,\hs{#1}^{#2}_{#3}),\ldots,(\hs{#1}^{1}_{#4},\dots,\hyp{#1}^{#2}_{#4})\right)}
\newcommand{\chainofcrosses}[3]{\left(\CCc{#1}_{#2},\ldots,\CCc{#1}_{#3}\right)}
\newcommand{\setofhyps}[4]{\left\{(\hs{#1}_{#3},\hs{#2}_{#3}),\ldots,(\hs{#1}_{#4},\hs{#2}_{#4})\right\}}
\newcommand{\meet}{\wedge} %track - t
\newcommand{\bigmeet}{\bigwedge} %track - t
\newcommand{\join}{\vee} %track - t
\DeclareFontFamily{U}{matha}{\hyphenchar\font45}
\DeclareFontShape{U}{matha}{m}{n}{
      <5> <6> <7> <8> <9> <10> gen * matha
      <10.95> matha10 <12> <14.4> <17.28> <20.74> <24.88> matha12
      }{}
\DeclareSymbolFont{matha}{U}{matha}{m}{n}
\title{Cubical accessibility and bounds on curves on surfaces}
\author{Benjamin Beeker}
\address{Department of Mathematics, Hebrew University, Jerusalem, Israel}
\email{beeker@tx.technion.ac.il}
\urladdr{}
\author{Nir Lazarovich}
\address{Department of Mathematics, ETH Z\"urich, R\"amistrasse 101, 8092 Z\"urich, Switzerland}
\email{nir.lazarovich@math.ethz.ch}
\date{} % Activate to display a given date or no date (if empty),
\begin{document}

\begin{abstract}
We bound the size of $d$-dimensional cubulations of finitely presented groups.
We apply this bound to obtain acylindrical accessibility for actions on CAT(0) cube complexes and bounds on curves on surfaces.
\end{abstract}

\maketitle

% !TeX root = main.tex

\section{introduction}
Let $\Sigma$ be a closed surface of genus $g$. It is a well-known fact that the size of a collection of non-homotopic simple closed curves on $\Sigma$ is bounded by $3g-3$.
Such a collection induces an action of $\pi_1(\Sigma)$ on a dual tree.
Sageev \cite{Sag95} showed how a general collection of curves gives rise to an action on a CAT(0) cube complex. This motivates the following definition.
Let $d\in\N$. A collection $\ptrn{S}$ of homotopy classes of essential curves on $\Sigma$ is called a \emph{$d$-pattern} if any pairwise intersecting set of lifts of them to the universal cover $\uc{\Sigma}$ of $\Sigma$ has cardinality at most $d$.
Applying Sageev's construction to a $d$-pattern yields a CAT(0) cube complex of dimension at most $d$.

%Let $\Sigma$ be a compact surface,  and let $d\in\N$. 
%This definition is motivated by the well-known construction of dual CAT(0) cube complexes by Sageev \cite{Sag95}.

%We remark that for $d=1$ this amounts to saying that $\ptrn{S}$ is a collection of disjoint simple closed curves and arcs. 
%In this case, one obtains a dual tree.
%It is a well-known fact that for $d=1$ the size of $\ptrn{S}$ is bounded by $3g-3+p$ where $g$ and $p$ are the genus and number of boundary components of $\Sigma$ respectively (whenever this number is positive). 
Thus, one is naturally led to ask the following question.

\begin{question}\label{curves on surfaces?}
Is there a bound $D=D(\Sigma,d)$ on the possible size of $\ptrn{S}$?
\end{question}

Similarly one can define $d$-patterns for collections of subsurfaces in 3-manifolds, and ask a similar question.
Let us note, that for $d=1$, this question was answered by Kneser \cite{Kne29} for collections of subspheres in 3-manifolds, and by Haken \cite{Hak61} and Milnor \cite{Mil62} for general subsurfaces. In \cite{BeLa16}, we answered both question affirmatively for $d=2$.

Dunwoody \cite{Dun85} defined the notion of patterns (which we consider as \emph{$1$-patterns}) on general finite 2-dimensional simplicial complexes. 
As in the case of $1$-patterns on surfaces (and 3-manifolds), $1$-patterns on simplicial complexes give rise to dual trees when lifted to the universal cover.
This fact was used in his paper to study actions of finitely presented groups by introducing \emph{resolutions} and studying their properties. 
In particular, Dunwoody proved that the size of a pattern on a finite 2-dimensional simplicial complex is bounded above by a bound which depends only on the simplicial complex. 
This result is a crucial step in the proof of accessibility, and moreover provides an easy combinatorial proof of the aforementioned bounds on $1$-patterns on surfaces and 3-manifolds.

In \cite{BeLa16}, we introduced the notion of $d$-patterns on 2-dimensional simplicial complexes and resolutions of actions on CAT(0) cube complexes. We will review these definitions in Section \ref{tracks and patterns}.
%We remark that for $d=2$, the bound for curves on surfaces can also be obtained by an Euler characteristic argument.

%In \todo{ref to Dunwoody}, Dunwoody studied group actions on trees using resolutions. 
%The resolution of a group action on a tree is a tree obtained by pulling back the mid-edges of the tree to a simply connected complex on which the group acts freely.
%He used these construction to prove the accessibility of finitely presented groups, and to give simple proofs of theorems of Kneser and Haken-Milnor providing bounds on the sizes of collections of disjoint essential embedded surfaces in 3-manifolds.
%
%In \todo{ref to 2d paper}, the authors introduced a similar notions for group actions on cube complexes.
%Similar to Dunwoody's result one obtains bounds on the number of tracks in the resolution of a 2-dimensional cube complex is bounded by a number that depends only on the group. As a corollary, similar bounds on collections of curves on surfaces (and embedded surfaces in 3-manifolds) with the property that any pairwise intersecting collection of lifts of the curves to the universal is of size at most two.

In this paper we extend the main result of \cite{BeLa16} to arbitrary $d$.

\begin{theorem}\label{main result}
	Let $\simp{K}$ be a finite 2-dimensional simplicial complex, and let $d\in\N$. Then there exists a constant $C=C(\simp{K},d)$ such that any $d$-pattern on $\simp{K}$ has at most $C$ parallelism classes of tracks.
\end{theorem}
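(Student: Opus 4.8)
The plan is to reduce the $d$-dimensional statement to the $1$-dimensional case (Dunwoody's theorem) by induction on $d$. The key idea is that a $d$-pattern $\ptrn{S}$ on $\simp{K}$ can be analyzed by looking at the "thickness" of each track: at each point, only finitely many tracks of $\ptrn{S}$ cross in a given lift, bounded by $d$. I would first pass, if necessary, to a subdivision $\simp{K}'$ of $\simp{K}$ with a controlled number of simplices, so that one may assume tracks are in a suitable normal form and that distinct parallelism classes can be separated combinatorially; the number of subdivisions needed should itself be bounded in terms of $\simp{K}$ and $d$.

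The core of the argument is the following dichotomy for tracks $t$ in the pattern. Call a parallelism class \emph{isolated} if it can be pushed off all other classes, i.e. there is a representative of $t$ disjoint from (lifts of) all other tracks; otherwise it is \emph{entangled}. For the isolated classes, the ambient structure gives a $1$-pattern after collapsing, so their number is bounded by Dunwoody's theorem applied to $\simp{K}'$ (or to a bounded modification of it). For entangled classes, the crossing condition of a $d$-pattern forces them to sit inside "blocks" where at most $d$ tracks mutually cross; I would argue that the union of all lifts crossing a fixed lift of $t$ has a product-like structure (an interval of diameter $\le d$), and that passing to the quotient by one such crossing family produces a $(d-1)$-pattern on a simplicial complex whose combinatorial size is bounded in terms of $\simp{K}$ and $d$. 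Invoking the induction hypothesis then bounds the number of entangled classes.

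Concretely, the steps are: (1) normalize the pattern and bound the complexity of the auxiliary subdivision $\simp{K}'$; (2) decompose $\ptrn{S}$ into the isolated part $\ptrn{S}_0$ and the entangled part $\ptrn{S}_1$; (3) bound $|\ptrn{S}_0|$ by reducing to Dunwoody's $1$-pattern bound; (4) show that near an entangled track the pattern has a product/interval structure of width at most $d$, using the $d$-pattern crossing hypothesis together with the combinatorics reviewed in Section~\ref{tracks and patterns}; (5) collapse a maximal crossing family to produce a $(d-1)$-pattern on a complex of bounded size and apply induction to bound $|\ptrn{S}_1|$; (6) add the two bounds to obtain $C(\simp{K},d)$.

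I expect step (4)–(5) to be the main obstacle: controlling how crossing families of tracks can be "collapsed" without destroying the simplicial structure, and verifying that the resulting complex after collapsing really does have size bounded purely in terms of $\simp{K}$ and $d$ (rather than in terms of $|\ptrn{S}|$, which would make the induction circular). This is exactly the place where the hypothesis that $\simp{K}$ is $2$-dimensional should be used decisively — tracks are graphs and their crossing patterns in triangles are rigid enough to be enumerated — whereas in higher dimensions the analogous collapse fails. A secondary subtlety is ensuring that "parallelism" is preserved or at worst finitely distorted under the collapse, so that counting parallelism classes upstairs is faithfully controlled by counting them downstairs.
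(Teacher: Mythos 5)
Your outline takes a different route from the paper --- you propose an induction on $d$ with base case Dunwoody's theorem, whereas the paper works directly in the dual CAT(0) cube complex with no induction on dimension --- and the route breaks down exactly where you flag uncertainty, at steps (4)--(5). The operation of ``collapsing a maximal crossing family'' to produce a $(d-1)$-pattern on a new 2-complex is not defined, and there is no mechanism that would make the resulting complex have size bounded by $\simp{K}$ and $d$ alone rather than by the pattern itself, which as you note would make the induction circular. Worse, the reduction does not exist in general: already for $d=2$ a pattern can have arbitrarily many nonparallel tracks each crossing exactly one other track, so the ``entangled part'' can be huge while containing no crossing family of size $d$ to collapse; there is then nothing to which a $(d-1)$-dimensional hypothesis could apply. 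The isolated/entangled split in step (2) also has a simultaneity problem: two tracks each pushable off all others need not be pushable off each other at the same time, so the isolated part is not automatically a $1$-pattern ready for Dunwoody. These gaps are not cosmetic --- they are the entire content of the theorem.

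For contrast, the paper's argument is of a completely different nature. It passes to halfspaces of intervals in the dual cube complex $\CC{X}$, defines locally parallel adjacent pairs, and proves the existence of a countercrosser above every nonmaximal halfspace by a meet/join counting argument on crosses (Lemma \ref{lemma0}). Ramsey-type reductions (Lemmas \ref{Ramseypowerup}, \ref{reductionforpairs}, \ref{reductionforcrosses}) reduce to staircases and ladders, and the key Lemmas \ref{lemma1staircase}, \ref{lemma1}, \ref{lemma2} bound how many locally parallel pairs can be separated by a median or can lose local parallelism across a triangle. In the proof of Theorem \ref{main result}, each nonparallel track is then charged to an edge, a maximal halfspace in an interval, or a triangle where one of these phenomena occurs, and the total is bounded by a fixed multiple of the numbers of edges and triangles of $\simp{K}$. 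The role of 2-dimensionality is to make this charging scheme work triangle by triangle, not to enable any dimensional collapse of the pattern.
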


As corollary we derive the following theorem, which answers Question \ref{curves on surfaces?}.

\begin{theorem}\label{thm: bound on subsurfaces}
	Let $\Sigma$ be a compact surface, and let $d\in\N$. There exists a constant $C=C(K,d)$ such that any $d$-pattern of curves and arcs on $\Sigma$ has at most $C=C(K,d)$ different homotopy classes.
\end{theorem}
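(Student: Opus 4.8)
The plan is to reduce Theorem~\ref{thm: bound on subsurfaces} to Theorem~\ref{main result} by fixing a triangulation of $\Sigma$ and realizing the curves and arcs of the pattern as tracks in it. First I would fix a triangulation of $\Sigma$, that is, a finite $2$-dimensional simplicial complex $\simp{K}$ homeomorphic to $\Sigma$; when $\partial\Sigma\neq\emptyset$ I would choose the triangulation so that $\partial\Sigma$ is a subcomplex, so that arcs can be handled as tracks with endpoints on boundary edges (alternatively, one may pass to the double $D\Sigma$, doubling each arc to a closed curve, at the cost of replacing $d$ by a constant depending only on $\Sigma$ and $d$). Next, equip $\Sigma$ with a complete hyperbolic metric (or a flat one in the exceptional cases) and homotope every member of the $d$-pattern $\ptrn{S}$ to its geodesic representative, so that the whole collection is simultaneously in minimal position; then push each geodesic, by a further homotopy supported near the $1$-skeleton, into normal form with respect to $\simp{K}$, i.e.\ realize it as a track $\trk{t}$, a finite disjoint union of normal arcs in the triangles of $\simp{K}$. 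Write $\ptrn{T}$ for the resulting collection of tracks.

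The second step is to check that $\ptrn{T}$ is a $d'$-pattern on $\simp{K}$ for some $d'=d'(\Sigma,d)$. Lifting to the universal cover $\uc{\simp{K}}\homeo\uc{\Sigma}$, the point is that a pairwise intersecting family of lifts of tracks in $\ptrn{T}$ should correspond, up to bounded error, to a pairwise intersecting family of lifts of geodesics in $\uc{\Sigma}$: since the curves and arcs were put in minimal position the normalization does not create new essential intersections, so such a family has cardinality at most $d$ (and one can take $d'=d$ in the closed case). Theorem~\ref{main result} then yields a constant $C=C(\simp{K},d')$ bounding the number of parallelism classes of tracks appearing in $\ptrn{T}$.

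Finally I would check that no homotopy classes are lost in passing from $\ptrn{S}$ to $\ptrn{T}$: if two tracks $\trk{t}_1,\trk{t}_2$ of $\ptrn{T}$ are parallel in $\simp{K}$, they cobound a subcomplex homeomorphic to $\trk{t}_1\times[0,1]$, which inside $\Sigma$ is an embedded annulus or strip and hence provides a homotopy (rel $\partial\Sigma$ in the arc case) between the curves/arcs represented by $\trk{t}_1$ and $\trk{t}_2$. Therefore distinct homotopy classes in $\ptrn{S}$ map to distinct parallelism classes in $\ptrn{T}$, and $\ptrn{S}$ has at most $C(\simp{K},d')=:C(\Sigma,d)$ homotopy classes, as desired.

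I expect the main obstacle to be the second step: making precise that the intersection pattern of lifts is preserved (or at worst controllably inflated) by the passage to normal form, and dealing cleanly with arcs and with $\partial\Sigma$ — either by working with tracks meeting boundary edges and extending the notion of parallelism accordingly, or by reducing to the closed case via doubling while keeping explicit track of how the constant $d$ changes. The first and third steps are essentially a dictionary between the surface picture and the simplicial picture and should be routine.
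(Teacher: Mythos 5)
Your reduction is essentially the standard one that the paper itself defers to (the proof is not given here but relegated to Section~5 of \cite{BeLa16}), and all the pieces are in the right place: triangulate, normalize, apply Theorem~\ref{main result}, and translate parallelism of tracks back into homotopy of curves/arcs. The one point you flag as a ``bounded error'' can in fact be made exact: once the collection is realized by geodesics (so that distinct lifts to the universal cover either coincide, are disjoint, or cross transversally exactly once with linked endpoints at infinity), normalization with respect to a fixed triangulation can be performed so that a pair of track-lifts intersects in $\uc{\simp{K}}$ if and only if the corresponding geodesic lifts do, giving $d'=d$; the parallelism-implies-homotopy step is likewise sound, since the region between two parallel lifted tracks in $\uc{\simp{K}}$ is a vertex-free strip whose quotient is an embedded annulus (or, for arcs, a square) cobounded by the two tracks.
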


Similarly for $3$-manifolds, we have the following.

\begin{theorem}\label{thm: bound on curves}
	Let $\mfld M$ be a compact irreducible, boundary-irreducible 3-manifold, and let $d\in\N$. There exists a constant $C=C(M,d)$, such that if $\ptrn S$ is a collection of non-homotopic, $\pi_1$-injective, 2-sided, embedded subsurfaces, such that the size of a pairwise intersecting collection of lifts to $\uc{\mfld M}$ is at most $d$, then $|\ptrn S|\le C$.
\end{theorem}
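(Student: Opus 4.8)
The plan is to deduce Theorem \ref{thm: bound on curves} from Theorem \ref{main result} by encoding a collection of subsurfaces in $\mfld M$ as a $d$-pattern on a fixed finite $2$-complex. First I would fix a triangulation of $\mfld M$ and let $\simp K$ be its $2$-skeleton; the key point is that this complex depends only on $\mfld M$, so the constant $C(\simp K, d)$ produced by Theorem \ref{main result} will depend only on $\mfld M$ and $d$. Given $\ptrn S = \{S_1,\dots,S_n\}$ as in the statement, I would homotope the $S_i$ into a suitable normal form with respect to the triangulation (standard normal surface theory, after an innermost-disk/irreducibility argument), so that each $S_i$ meets the $2$-skeleton $\simp K$ in a $1$-dimensional pattern, i.e.\ a disjoint union of tracks $\trk{t}_i = S_i \cap \simp K$ in $\simp K$.

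The crucial intermediate claim is that the union $\ptrn T = \bigcup_i \trk t_i$, viewed as a pattern on $\simp K$, is a $d$-pattern in the sense reviewed in Section \ref{tracks and patterns}, and moreover that distinct homotopy classes of the $S_i$ give rise to distinct parallelism classes of tracks. For the first part I would argue that a pairwise-intersecting family of lifts of the tracks $\trk t_i$ to $\uc{\simp K}\subset\uc{\mfld M}$ is induced by a pairwise-intersecting family of lifts of the corresponding $S_i$ to $\uc{\mfld M}$: two lifts $\uc S, \uc S'$ of (possibly equal) $S_i, S_j$ that are in normal form intersect in $\uc{\mfld M}$ if and only if their traces on $\uc{\simp K}$ intersect — here $\pi_1$-injectivity, two-sidedness, and irreducibility/boundary-irreducibility of $\mfld M$ are used to rule out inessential intersections and to ensure the normal form is faithful. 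Hence a pairwise-intersecting set of track-lifts of size $>d$ would yield a pairwise-intersecting set of surface-lifts of size $>d$, contradicting the hypothesis; so $\ptrn T$ is a $d$-pattern. For the second part I would show that if $\trk t_i$ and $\trk t_j$ are parallel tracks in $\simp K$ then $S_i$ and $S_j$ cobound a product region in $\mfld M$ and are therefore homotopic, again invoking irreducibility and boundary-irreducibility to promote the $I$-bundle between the parallel tracks to an $I$-bundle between the surfaces.

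Granting these two claims, Theorem \ref{main result} applied to $\ptrn T$ on $\simp K$ bounds the number of parallelism classes of tracks by $C(\simp K, d)$, and since $i\mapsto [\trk t_i]$ is injective on homotopy classes by the second claim, we get $|\ptrn S|\le C(\simp K,d)=:C(\mfld M,d)$, as desired. I expect the main obstacle to be the normal-form step and the faithfulness claim in the second paragraph: putting an arbitrary $\pi_1$-injective embedded surface into a normal form whose intersection pattern with the $2$-skeleton exactly records the intersection pattern of the surfaces (both self-intersections of lifts of a single $S_i$ and mutual intersections) requires care, and is precisely where the hypotheses on $\mfld M$ enter. This is the analogue, for $d$-patterns, of the classical Kneser--Haken--Milnor normalization, and the bulk of the work is checking that passing to normal form changes neither the homotopy classes nor the combinatorics of pairwise intersections of lifts.
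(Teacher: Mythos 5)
Your outline matches the paper's approach: the paper defers the proof of Theorem~\ref{thm: bound on curves} to Section~5 of \cite{BeLa16}, which carries out precisely the Kneser--Haken-style reduction you describe --- normalize the surfaces with respect to a fixed triangulation so that they meet the 2-skeleton in a $d$-pattern of tracks, argue that parallelism of tracks forces the surfaces to cobound $I$-bundles (using irreducibility and boundary-irreducibility of $\mfld M$), and then invoke Theorem~\ref{main result}. The points you flag as delicate (that normalization preserves the combinatorics of essential pairwise intersections of lifts, and that track-parallelism promotes to homotopy of the surfaces) are exactly the content of that reference.
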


%\begin{theorem} \label{thm: bound on submanifolds} \todo{rewrite}
%Let $\mfld M$ be a compact surface or 3-manifold manifold, and let $d\in\N$. There exists a constant $C$, depending only on $\mfld M$ and on $d$, such that if $\ptrn S$ is a collection of non-homotopic, $\pi_1$-injective, co-dimension-1, 2-sided, embedded sub-manifolds, such that the size of a pairwise intersecting collection of lifts to $\uc{\mfld M}$ is at most $d$, then $|\ptrn S|\le C$.\todo{I copied it from the previous paper, but it seems to be missing some assumptions.}
%\end{theorem}

For the proofs of Theorem \ref{thm: bound on subsurfaces} and Theorem \ref{thm: bound on curves}  from Theorem \ref{main result}, we refer to Section 5 in~\cite{BeLa16}.

Dunwoody's bound on patterns was extensively used in the literature to study accessibility of group actions on trees.
In this paper, we focus on generalizing acylindrical accessibility for CAT(0) cube complexes.

Let $G$ be a group, $\mathcal{C}$ be a collection of subgroups of $G$ which is closed under conjugation and subgroups, and $k$ be a natural number. 
We say that the group $G$ acts $(k,\mathcal{C})$-\emph{acylindrically} on a tree if the stabilizer of any segment of $k$ edges in the tree belongs to the collection $\mathcal{C}$. 
Similarly one can define $(k,\mathcal{C})$-\emph{acylindricity on hyperplanes} for actions on cube complexes by requiring that the common stabilizer of any chain of $k$ halfspaces belongs to $\mathcal{C}$.
This notion should not be confused with acylindrical actions (and weak acylindrical actions) on metric spaces, see Bowditch \cite{Bow08}, even though the two are related by recent work of Genevois \cite{Gen16}.

In \cite{Sel97}, Sela proved that for any finitely generated group $G$ and $k$, any reduced $(k,\{1\})$-acylindrical action of $G$ on a tree has a bounded quotient, or equivalently, there is a bound on the number of orbits of edges.
In \cite{Del99}, Delzant proved a similar result for finitely presented groups using Dunwoody's bounds on resolutions. He showed that if $G$ is finitely presented and does not split non-trivially over a subgroup in $\mathcal{C}$, then there is a bound that depends on $G$ and $k$ on the number of edge-orbits of $(k,\mathcal{C})$-acylindrical actions of $G$ on a tree.

Since Theorem \ref{main result} applies more generally to cubulations which come from patterns, following Delzant's proof, we are able to prove the following theorem.

\begin{theorem}[Acylindrical accessibility for CAT(0) cube complexes]\label{acylindrical accessibility}
	Let $G$ be a finitely presented group, let $\mathcal{C}$ be a family of subgroups of $G$ which is closed under conjugation, commensurability, and subgroups, and let $d\in\N$. There exists $D=D(d,G)$ such that if $G$ does not act essentially on a $d$-dimensional CAT(0) cube complex with hyperplanes stabilizers in $\mathcal{C}$, then any $(k,\mathcal{C})$-acylindrical on hyperplanes essential action on a $d$-dimensional CAT(0) cube complex has at most $k\cdot D$ hyperplanes.
\end{theorem}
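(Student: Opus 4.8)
The plan is to follow Delzant's strategy, adapting it from trees to CAT(0) cube complexes and feeding in Theorem \ref{main result} in place of Dunwoody's bound on patterns. The key bridge between the cube-complex world and the combinatorial world of patterns on 2-complexes is \emph{resolution}: given a finitely presented group $G$ with presentation complex $\simp{K}$ (a finite 2-dimensional simplicial complex), and an essential action of $G$ on a $d$-dimensional \CCC $\CC{X}$, one pulls back the hyperplanes of $\CC{X}$ to produce a $G$-equivariant $d$-pattern on $\uc{\simp{K}}$, hence a $d$-pattern on $\simp{K}$. Theorem \ref{main result} then bounds the number $C=C(\simp{K},d)=C(G,d)$ of parallelism classes of tracks in any such pattern. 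The heart of the argument is to show that if $G$ does not act essentially on any $d$-dimensional \CCC with hyperplane stabilizers in $\mathcal{C}$, then a $(k,\mathcal{C})$-acylindrical-on-hyperplanes essential action on a $d$-dimensional \CCC $\CC{X}$ cannot have more than $k\cdot D$ hyperplanes, where $D$ is built from $C(G,d)$.

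First I would set up the resolution: fix a finite presentation of $G$ and its presentation 2-complex $\simp{K}$; equivariantly map $\uc{\simp{K}}$ to $\CC{X}$ (using that $\uc{\simp{K}}$ is simply connected and $\CC{X}$ is CAT(0), so the map can be built skeleton by skeleton and made to behave well with respect to hyperplanes), and take preimages of midcubes to get a $G$-invariant $d$-pattern $\uptrn{S}$ on $\uc{\simp{K}}$ descending to a $d$-pattern $\ptrn{S}$ on $\simp{K}$. By Theorem \ref{main result}, $\ptrn{S}$ has at most $C=C(G,d)$ parallelism classes of tracks. Second, I would analyze what happens when two hyperplanes of $\CC{X}$ pull back to \emph{parallel} tracks in $\uc{\simp{K}}$: parallel tracks cobound a product region in $\uc{\simp{K}}$, and I would argue that the chain of hyperplanes crossed along a transversal to this product region gives a chain of halfspaces whose common stabilizer is therefore large — and if such a chain has length more than $k$, acylindricity on hyperplanes forces that common stabilizer into $\mathcal{C}$. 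Third, I would use the standard Dunwoody/Delzant-type counting: the number of $G$-orbits of hyperplanes of $\CC{X}$ is controlled by the number of parallelism classes of tracks times the maximal length of a "parallel chain" of hyperplanes; the non-splitting hypothesis (no essential action on a $d$-dimensional \CCC with hyperplane stabilizers in $\mathcal{C}$) is what rules out arbitrarily long chains of hyperplanes with a common stabilizer in $\mathcal{C}$ — otherwise one could restrict the $G$-action to such a chain and produce a forbidden essential cubulation. Quantitatively this should give the bound $k\cdot D$ with $D$ depending on $C(G,d)$ and on the combinatorics of $\simp{K}$, hence only on $G$ and $d$.

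The main obstacle, I expect, is the middle step: controlling hyperplanes whose pullbacks are parallel tracks, and converting "many parallel hyperplanes" into "a long chain of halfspaces with a common stabilizer." One has to be careful that the resolution is chosen so that distinct hyperplanes really do give distinct (non-parallel) tracks unless the hyperplanes themselves are "parallel" in the cube complex in a suitable sense — i.e., separated by no other hyperplane whose track is essential in the 2-complex — and that the product region between parallel tracks in $\uc{\simp{K}}$ genuinely maps across a linearly ordered family of hyperplanes in $\CC{X}$. This requires choosing the equivariant map to $\CC{X}$ generically and possibly passing to the cubical subdivision or a "taut" representative so that tracks are in minimal position; the bookkeeping that the number of parallelism classes times $k$ bounds the orbit count of hyperplanes, with closure of $\mathcal{C}$ under commensurability used to absorb finite-index ambiguities coming from the non-properly-discontinuous features of the resolution, is where the argument is most delicate. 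Once that dictionary is in place, the accessibility conclusion follows from Theorem \ref{main result} by the same formal counting as in Delzant's tree argument, with $D=D(d,G)$ essentially $C(G,d)$ up to a multiplicative constant depending on $\simp{K}$.
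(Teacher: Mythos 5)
Your plan correctly follows Delzant's template, with Theorem \ref{main result} replacing Dunwoody's bound, and the first step (resolve the action to a $d$-pattern on a presentation complex and apply Theorem \ref{main result}) matches the paper's. However, the middle and final steps contain gaps that need repair.

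In the middle step you assert that hyperplanes of $\CC{X}$ whose pullback tracks lie in a common parallelism class form a chain of halfspaces, because the tracks are nested and cobound a product region in $\uc{\simp{K}}$. This inference does not hold: the equivariant map $\uc{\simp{K}}\to\CC{X}$ is far from onto, so two hyperplanes of $\CC{X}$ can cross while their pullback tracks are parallel --- the crossing square can be missed entirely by the image. The paper closes this with a Ramsey/Dilworth observation (Observation \ref{Ramsey}): in a $d$-dimensional cube complex at most $d$ halfspaces can be pairwise transverse, so any $R(k,d)$ halfspaces admit a subsequence of $k$ forming a chain. Combined with pigeonhole this gives: once the pattern has more than $R(k,d)\cdot C(\simp{K},d)$ tracks, some parallelism class produces a chain of $k$ hyperplanes of $\CC{X}$.

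In the final step you propose to "restrict the $G$-action to such a chain and produce a forbidden essential cubulation." A chain of hyperplanes in $\CC{X}$ is not $G$-invariant, so there is no action to restrict. The correct contradiction runs through a single track $\trk{t}$ in the large parallelism class: $\Stab(\trk{t})$ permutes the finitely many tracks in the class, so a finite-index subgroup fixes each one and hence lies in the common stabilizer of the extracted length-$k$ chain; acylindricity places that common stabilizer in $\mathcal{C}$, and closure of $\mathcal{C}$ under subgroups and commensurability then places $\Stab(\trk{t})$ itself in $\mathcal{C}$. (This permutation of parallel tracks, not any lack of proper discontinuity, is where commensurability closure is actually used.) The $G$-orbit of $\trk{t}$ alone is then an essential $d$-pattern, and its dual cube complex (Proposition 3.2 of \cite{CaSa11}) carries an essential $d$-dimensional $G$-action whose hyperplane stabilizers are conjugates of subgroups of $\Stab(\trk{t})$, hence lie in $\mathcal{C}$; this contradicts the standing hypothesis on $G$.
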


%\begin{proof}
%	Let $\simp{K}$ be a presentation complex for $G$, i.e $\pi_1 (\simp{K})=G$.
%	Let $\CC{X}$ be a $d$-dimensional CAT(0) cube complex on which $G$ acts $(k,\mathcal{C})$-acylindrically. 
%	Pullback the hyperplanes of $\CC{X}$ to get a $d$-pattern $\ptrn{P}$ on $\simp{K}$. 
%	By the pigeon hole principle, if $\ptrn{P}$ has more than $kC$ tracks, then there are $k+1$ which belong to the same parallelism class. Let $\trk{t}$ be a track in this parallelism class. By assumption, the stabilizers of $\trk{t}$ is in $\mathcal {C}$, and this track alone gives a $d$-pattern on $G$, which induces a $G$ action on a $d$-dimensional CAT(0) cube complex whose hyperplane stabilizers are in $\mathcal {C}$. Contradicting the assumption on $G$.	
%\end{proof}

The following Corollary follows from Theorem \ref{acylindrical accessibility} and item \ref{one end implies no CCC over finite} of Proposition \ref{cube complexes to trees}.

\begin{corollary}\label{one ended acyl accessibility}
	Let $G$ be a finitely presented one-ended group, then for all $d$ there exists a constant $C=C(d,G)$ such that every $(k,\mathcal{F})$-acylindrical on hyperplanes action on a $d$-dimensional CAT(0) cube complex has at most $k\cdot C$ hyperplanes, where $\mathcal {F}$ is the collection of all finite subgroups.
\end{corollary}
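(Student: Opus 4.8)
The plan is to specialize Theorem~\ref{acylindrical accessibility} to the family $\mathcal F$ of all finite subgroups of $G$. The first thing to do is to check that $\mathcal F$ satisfies the requirements imposed on $\mathcal C$ in that theorem: it is closed under passing to subgroups and under conjugation for trivial reasons, and it is closed under commensurability because a group containing a finite subgroup of finite index is itself finite. So $\mathcal F$ is an admissible choice of $\mathcal C$.

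Next I would verify the remaining hypothesis of Theorem~\ref{acylindrical accessibility}, namely that $G$ does not act essentially on a $d$-dimensional \CCC with hyperplane stabilizers in $\mathcal F$. This is exactly item~\ref{one end implies no CCC over finite} of Proposition~\ref{cube complexes to trees}, applied under the hypothesis that $G$ is one-ended; at bottom it reflects the fact that an essential cubulation with finite hyperplane stabilizers would produce a nontrivial action of $G$ on a tree with finite edge stabilizers, hence a splitting of $G$ over a finite subgroup, which is impossible for a one-ended finitely generated group by Stallings' theorem on ends of groups. This is the only place where one-endedness enters, and it is the only step carrying genuine mathematical content — but it has already been carried out in Proposition~\ref{cube complexes to trees}, so for the present corollary it is merely a citation.

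With both hypotheses in place, Theorem~\ref{acylindrical accessibility} supplies a constant $D=D(d,G)$ such that every $(k,\mathcal F)$-acylindrical on hyperplanes essential action of $G$ on a $d$-dimensional \CCC has at most $k\cdot D$ hyperplanes, and one sets $C=C(d,G):=D(d,G)$. To deal with an action that is not a priori essential, I would first pass to the essential core: this does not create new hyperplanes, its hyperplanes correspond to the essential hyperplanes of the original complex with the same stabilizers, and a chain of halfspaces in the essential core is the restriction of a chain of halfspaces in the original complex, so $(k,\mathcal F)$-acylindricity on hyperplanes is inherited. Hence the bound $k\cdot C$ holds for every $(k,\mathcal F)$-acylindrical on hyperplanes action, as claimed. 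I do not expect a substantive obstacle here: the two points needing a moment's care are the commensurability-closure of $\mathcal F$ and the reduction to essential actions, and everything else is a direct invocation of Theorem~\ref{acylindrical accessibility} and Proposition~\ref{cube complexes to trees}.
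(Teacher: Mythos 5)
Your overall strategy coincides with the paper's: the corollary is a direct application of Theorem~\ref{acylindrical accessibility} with $\mathcal{C}=\mathcal{F}$, using item~\ref{one end implies no CCC over finite} of Proposition~\ref{cube complexes to trees} to verify that a one-ended finitely presented group cannot act essentially on a $d$-dimensional \CCC with finite hyperplane stabilizers. Your verification that $\mathcal{F}$ is closed under subgroups, conjugation, and commensurability is correct and worth spelling out. That part is exactly the paper's (one-line) proof.

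The step that does not go through is your reduction to essential actions. You observe that the $G$-essential core has the same stabilizers and that chains in the core come from chains in the ambient complex, so the core inherits $(k,\mathcal{F})$-acylindricity on hyperplanes; all of this is fine. But passing to the core \emph{discards} the $G$-inessential hyperplanes, so a bound on the hyperplanes of the core only bounds the essential hyperplanes of the original complex, not all of them — the implication runs the wrong way for the conclusion you draw. In fact, the corollary is false if read without an essentiality assumption: let $G$ act trivially on a simplicial star with $n$ edges. This is a one-dimensional \CCC in which every chain of halfspaces has length at most~$2$, so the trivial action is $(3,\mathcal{F})$-acylindrical on hyperplanes vacuously, and yet the number of hyperplanes is $n$, which is unbounded (and none of them is $G$-essential). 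The statement should be read as inheriting the word ``essential'' from Theorem~\ref{acylindrical accessibility}, which is the only theorem the paper proves and the only form it invokes; once that qualifier is in place, the non-essential reduction is unnecessary and your proof matches the paper's exactly.
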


As an application we prove the following on embeddings of finitely presented one-ended groups into hyperbolic Coxeter groups.

\begin{corollary}
	Let $G$ be a finitely presented one-ended group, and let $d\in\N$. Then there exists $D=D(G,d)$ such that for any embedding of $G$ into a hyperbolic right-angled Coxeter group $W_\Gamma$ on a graph $\Gamma$ with clique number at most $d$, there exists a subgraph $\Gamma'$ with at most $D$ vertices such that the image of $G$ is in a conjugate of the special parabolic subgroup $W_{\Gamma'}\le W_{\Gamma}$.
\end{corollary}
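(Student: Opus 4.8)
The plan is to combine the acylindrical accessibility statement of Corollary~\ref{one ended acyl accessibility} with the standard geometry of right-angled Coxeter groups acting on their Davis complex. Let $W_\Gamma$ be a hyperbolic right-angled Coxeter group on a graph $\Gamma$ with clique number at most $d$, and suppose $G\hookrightarrow W_\Gamma$ is an embedding of a finitely presented one-ended group. First I would recall that $W_\Gamma$ acts properly and cocompactly on the Davis complex $X_\Gamma$, which is a \CCC of dimension equal to the clique number of $\Gamma$, hence at most $d$; its hyperplanes are stabilized by the conjugates of the standard parabolic subgroups $W_{\mathrm{lk}(v)}$ generated by links of vertices, and more relevantly the \emph{wall stabilizers} are virtually conjugates of such parabolics. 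Restricting the $W_\Gamma$-action to $G$ gives an action of $G$ on a $d$-dimensional \CCC. Since $W_\Gamma$ is hyperbolic, the key point is that this restricted action is $(k,\mathcal F)$-acylindrical on hyperplanes for a \emph{uniform} $k=k(W_\Gamma)$: a long enough chain of halfspaces forces a long geodesic fellow-travelled by its stabilizer, and hyperbolicity of $X_\Gamma$ (equivalently, of $W_\Gamma$) together with properness of the $W_\Gamma$-action bounds the stabilizer of such a chain, so that the common stabilizer of a chain of $k$ halfspaces is finite, i.e.\ lies in $\mathcal F$. (The precise value of $k$ comes from the hyperbolicity constant and the cocompactness constant of $X_\Gamma$; this is where I expect to be slightly careful.)

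Having arranged a $(k,\mathcal F)$-acylindrical on hyperplanes action of $G$ on the $d$-dimensional complex $X_\Gamma$, I would next pass to the essential core. The $G$-action on $X_\Gamma$ need not be essential, but one can restrict to the essential core $Y\subseteq X_\Gamma$ (in the sense of Caprace--Sageev), which is again a $d$-dimensional \CCC on which $G$ acts essentially, and the restriction of an acylindrical-on-hyperplanes action to the essential core remains acylindrical on hyperplanes with the same $k$ (stabilizers of chains only shrink). Now Corollary~\ref{one ended acyl accessibility} applies: there is a constant $C=C(d,G)$, independent of $\Gamma$, such that this essential action has at most $k\cdot C$ hyperplanes. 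Set $D:=k\cdot C$; note $k$ depends only on a fixed bound for the hyperbolicity and cocompactness constants of Davis complexes of hyperbolic RACGs with clique number $\le d$, which one can absorb into a constant depending only on $d$ — so $D=D(G,d)$ as claimed. (If one prefers not to bound $k$ uniformly over all such $\Gamma$, one can instead observe that $G$ finitely generated inside $W_\Gamma$ has image of bounded ``diameter of fundamental domain'', giving a direct bound on $k$.)

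Finally I would translate ``at most $D$ hyperplanes in the essential $G$-action on $Y$'' back into a statement about a subgraph $\Gamma'\subseteq\Gamma$. Each hyperplane of $Y$ is (a piece of) a wall of $X_\Gamma$, and each wall of $X_\Gamma$ is the translate of a standard wall, which is associated to a vertex $v\in\Gamma$ — more precisely, the walls crossed by a geodesic in $X_\Gamma$ from the base vertex to $g$ correspond to the letters in a reduced word for $g$. Let $\Gamma'$ be the (full) subgraph of $\Gamma$ spanned by the vertices $v$ such that some translate of the standard wall of $v$ meets $Y$ nontrivially, equivalently appears as a hyperplane of the essential core; since $Y$ has at most $D$ hyperplanes up to the $G$-action and $G$ is generated by finitely many elements of bounded word length, the set of such vertices has cardinality at most $D$ (after possibly enlarging $D$ by a factor depending on the finite generating set of $G$, still a function of $G$ and $d$). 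Then every element of $G$, written as a reduced word in $W_\Gamma$, only involves generators dual to walls that $Y$ (hence a $G$-orbit) crosses, so $G$ stabilizes the corresponding standard coset and therefore $G$ lies in a conjugate of $W_{\Gamma'}$. The main obstacle is the second step: making the constant $k$ (acylindricity on hyperplanes of the restricted action) genuinely uniform and extracting it cleanly from hyperbolicity of $W_\Gamma$, and ensuring the essential-core reduction does not destroy acylindricity or inflate the dimension; once that is in place, the combinatorial bookkeeping identifying $\Gamma'$ from the finite hyperplane set is routine.
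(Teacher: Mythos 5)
Your overall skeleton is right — set up an action on the Davis complex, establish acylindricity on hyperplanes, invoke Corollary~\ref{one ended acyl accessibility}, and read off a small subgraph $\Gamma'$ — but the central step, making the acylindricity constant $k$ uniform, is where your argument breaks, and it is precisely the step the paper handles differently.

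You try to derive $(k,\mathcal F)$-acylindricity on hyperplanes from the $\delta$-hyperbolicity of the Davis complex, and then claim that $k$ ``depends only on a fixed bound for the hyperbolicity and cocompactness constants of Davis complexes of hyperbolic RACGs with clique number $\le d$, which one can absorb into a constant depending only on $d$.'' There is no such fixed bound, or at any rate you have given no argument for one: hyperbolicity of $W_\Gamma$ is a purely combinatorial condition on $\Gamma$ (no induced $4$-cycle), and nothing prevents the $L^1$ or CAT(0) hyperbolicity constant of the Davis complex from growing with $\Gamma$. Your fallback remark, that $G$ has ``bounded diameter of fundamental domain'' inside $W_\Gamma$, is likewise unsupported — $G$ need not act cocompactly on anything here, and its generators can have arbitrarily long word length in $W_\Gamma$. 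Since $D$ in the statement is required to depend only on $G$ and $d$ and not on $\Gamma$, a $k$ that secretly depends on $\Gamma$ does not give the theorem.

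The paper avoids metric hyperbolicity entirely and obtains the sharp constant $k=2$ combinatorially. In the Davis complex, the stabilizer of a hyperplane labelled by a vertex $v$ is a conjugate of the special parabolic $W_{\mathrm{st}(v)}$. If two halfspaces $\hs h < \hs k$ are adjacent (nothing strictly between them), the carriers of $\hyp h$ and $\hyp k$ share a vertex, so after translating we may take them to be the standard hyperplanes at the identity labelled $u$ and $v$; since they do not cross, $u$ and $v$ are non-adjacent in $\Gamma$. Hyperbolicity of $W_\Gamma$ (no induced $4$-cycle in $\Gamma$) then forces $\mathrm{st}(u)\cap\mathrm{st}(v)$ to be a clique, so the common stabilizer $W_{\mathrm{st}(u)}\cap W_{\mathrm{st}(v)} = W_{\mathrm{st}(u)\cap\mathrm{st}(v)}$ is finite. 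Thus the $W_\Gamma$-action (and a fortiori the restricted $G$-action) is $(2,\mathcal F)$-acylindrical on hyperplanes, uniformly in $\Gamma$, and Corollary~\ref{one ended acyl accessibility} applies at once after the preliminary normalization that $G$ lies in no conjugate of a proper special parabolic. Your observation about passing to the essential core is a legitimate point of care, and your translation from ``finitely many hyperplanes'' to the subgraph $\Gamma'$ is roughly the intended bookkeeping; but without the combinatorial route to a $\Gamma$-independent $k$, the proof as you have written it does not go through.
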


\begin{proof}
	Without loss of generality let $\Gamma$ be such that $G$ does not embed into a conjugate of a proper special subgroup. 
	The embedding induces an action of $G$ on the Davis complex $\CC{X}$ of $W_\Gamma$.
	Each hyperplane of $\CC{X}$ has a corresponding vertex $v$ in $\Gamma$, and the stabilizer of the hyperplane is a conjugate of the special subgroup $W_{\Link(v,\Gamma)}$. 
	By the hyperbolicity of $W_\Gamma$, the stars of any two vertices at distance 2 intersect in a clique. Hence, the common stabilizer of any two adjacent hyperplanes in $A(\Gamma)$ is finite. Thus, the common stabilizer in $G$ is finite. 
	This shows that the action of $G$ on $\CC{X}$ is (2,$\mathcal{F}$)-acylindrical on hyperplanes. 
	By Corollary \ref{one ended acyl accessibility} we obtain the desired conclusion.
\end{proof}

We note that the bounds obtained in Theorem \ref{main result} are probably far from being sharp, since they depend in part on Ramsey's theorem. 
Thus, we did not bother computing them.
However, one may ask what are the effective bounds. 
In particular, even though our bound in Theorem \ref{thm: bound on subsurfaces} depends linearly on the genus of the surface, the question of finding the optimal dependence on $d$ remains open. 

A priori, Question \ref{curves on surfaces?} may appear related to the bounds obtained in  Aougab and Gaster \cite{AoGa15}  or Przytycki \cite{Prz15} on sets of curves with bounded intersections. 
However, we would like to point out that these problem are of fundamentally different nature. 
For example, while there are only finitely many mapping class group orbits of sets of curves with at most $k$ intersections, there are infinitely many orbits of $d$-patterns for any $d\ge 2$. 
%\todo{add discussion about bounds on counting curves on surfaces, Przytycki et al.}

\subsection*{Acknowledgements.}

We would like to thank Jonah Gaster for fruitful conversations on Question \ref{curves on surfaces?}. The second author acknowledges the support received by the ETH Zurich Postdoctoral Fellowship Program and the Marie Curie Actions for People COFUND Program.

% !TeX root = main.tex
\section{Preliminaries}

\subsection{CAT(0) cube complexes and pocsets}

We begin by a short survey of definitions concerning CAT(0) cube complexes and pocsets. A reader who is acquainted with the basic terminology can skip this subsection. For further details see, for example, Sageev \cite{Sag12}.

A \emph{cube complex} is a collection of euclidean cubes of various dimensions in which subcubes have been identified isometrically. 

A simplicial complex is \emph{flag} if every $(n+1)$-clique in its 1-skeleton spans a $n$-simplex.
A cube complex is \emph{non-positively curved} (NPC) if the link of every vertex is a flag simplicial complex. It is a \emph{\CCC} if moreover it is simply connected.

A cube complex $\CC{X}$ can be equipped with two natural metrics, the euclidean and the $L^1$-metric. With respect to the former $\CC{X}$ is NPC if and only if it is NPC \`{a} la Gromov (see Gromov \cite{Gro87} or Bridson and Haefliger \cite{BrHa99}). While the latter is more natural to the combinatorial structure of CAT(0) cube complexes described below.

Given a cube $\CCc{C}$ and an edge $\CCc{e}$ of $\CCc{C}$. The midcube of $\CCc{c}$ associated to $\CCc{e}$ is the convex hull of the midpoints of $\CCc{e}$ and the edges parallel to $\CCc{e}$.
A \emph{hyperplane} associated to $\CCc{e}$ is the smallest subset containing the midpoint of $\CCc{e}$ and such that if it contains a midpoint of an edge it contains all the midcubes containing it.
Every hyperplane $\hyp{h}$ in a \CCC $\CC{X}$ separates $\CC{X}$ into exactly two components, see for example Niblo and Reeves \cite{NiRe98}, called the \emph{halfspaces} associated to $\hyp{h}$. A hyperplane can thus also be abstractly viewed as a pair of complementary halfspaces. 
%The \emph{carrier} $N(\hyp{h})$ of $\hyp{h}$ is the union of the cubes intersecting $\hyp{h}$.
For a \CCC $\CC{X}$ we denote by $\Hyp{H}=\Hyp{H}(X)$ the set of all hyperplanes in $\CC{X}$, and by $\Hs{H}=\Hs{H}(X)$ the set of all halfspaces. For each halfspace $\hs{h}\in \Hs{H}$ we denote by $\comp{\hs{h}}\in\Hs{H}$ its complementary halfspace, and by $\hyp{h}\in\Hyp{H}$ its bounding hyperplane, which we also identify with the pair $\{\hs{h},\comp{\hs{h}}\}$.

%A hyperplane in a \CCC \emph{separates} two points if each one belongs to a different halfspace. Conversely two hyperplanes are \emph{separated} by a point if there is no inclusion relation between the two halfspaces containing the point.
If two halfspaces $\hs{h}$ and $\hs{k}$ are such that none of $\hs{h}\cap \hs{k}$, $\comp{\hs{h}}\cap\hs{k}$, $\hs{h}\cap \comp{\hs{k}}$ and $\comp{\hs{h}}\cap \comp{\hs{k}}$ is empty, we write $\hs h \pitchfork \hs k$. %hyperplanes $\hyp h$ and $\hyp k$ intersect, we write $\hyp h \pitchfork \hyp k$.

We adopt Roller's viewpoint of Sageev's construction. Recall from Roller \cite{Rol98} that a \emph{pocset} is a triple $(\Hs{P},\le,\comp{})$ of a poset $(\Hs{P},\le)$ and an order reversing involution $\comp{}:\Hs{P}\to\Hs{P}$ satisfying $\hs{h}\neq\comp{\hs{h}}$ and $\hs{h}$ and $\comp{\hs{h}}$ are incomparable for all $\hs{h}\in\Hs{P}$.

The set of halfspaces $\Hs{H}$ of a \CCC has a natural pocset structure given by inclusion relation, and the complement operation $\comp{}$. Roller's construction starts with a locally finite pocset $(\Hs{P},\le,\comp{})$ of finite width  (see Sageev \cite{Sag12} for definitions) and constructs a \CCC $\CC{X}(\Hs{P})$ such that $(\Hs{H}(X),\subseteq, \comp{})=(\Hs{P},\le,\comp{})$. 
%We briefly recall the construction, for more details see Roller \cite{Rol98} or Sageev \cite{Sag12}.

%Two halfspaces are \emph{compatible} if their intersection is not empty in the cube complex. A subset of $\Hs{H}$ is an ultrafilter if and only if its halfspaces are pairwise compatible and it is maximal for this property.

%\todo{ Define pocset, interval (directed), and how pocsets of intervals are posets. define our kind of "crosses"}

\subsection{Tracks and patterns}\label{tracks and patterns}
The following definition of tracks and patterns is the same as in \cite{BeLa16}. It is a higher dimensional analogue of the definition of tracks and patterns (or ``$1$-patterns'') in Dunwoody \cite{Dun85}. 
As we describe in the next subsection, the $d$-patterns are used to construct $d$-dimensional CAT(0) cube complexes.

\begin{definition}
	A  \emph{drawing} on a $2$-dimensional simplicial complex $\simp{K}$ is a non empty union of simple paths in the faces of $\simp{K}$ such that:
	\begin{enumerate}
		\item on each face there is a finite number of paths,
		\item the two endpoints of each path are in the interior of distinct edges,
		\item the interior of a path is in the interior of a face,
		\item no two paths in a face have a common endpoint,
		\item if a point $x$ on an edge $\simpe{e}$ is an endpoint then in every face containing $\simpe{e}$ there exists a path having $x$ as an endpoint.
	\end{enumerate}
	
	A \emph{pre-track} is a minimal drawing. A pre-track is \emph{self-intersecting} if it contains two intersecting paths.
	
	Denote by $\usimp{K}$ the universal cover.
	
	\begin{itemize}
		\item A pre-track is a \emph{track} if none of its pre-track lifts in $\usimp{K}$ is self-intersecting.
		\item A \emph{pattern} is a set of tracks whose union is a drawing.
		\item A \emph{$d$-pattern} is a pattern such that the size of any collection of lifts of its tracks in $\usimp{K}$ that pairwise intersect is at most $d$.
	\end{itemize}
\end{definition}

We will sometimes view a pattern as the unions of its tracks in $\simp{K}$.

\subsection{The pocset structures associated to a pattern}\label{pocset} 

Let $\uptrn{P}$ be a pattern on a simply connected 2-simplex $\usimp{K}$.
For each track $\utrk{t}$ of $\uptrn{P}$, the set $\usimp{K}^0$ is naturally split by $\utrk{t}$ in two components  $\hs{h}_{\utrk{t}}$ and $\comp{\hs{h}_{\utrk{t}}}$ (see Dunwoody \cite{Dun85}). 
We call these components the \emph{halfspaces defined by $\utrk{t}$}, and the collection of all halfspaces is denoted by $\Hs{H}=\Hs{H}(\ptrn{P})$. 
This collection forms a locally finite pocset with respect to inclusion and complement operation $\comp{}$. 
If moreover $\uptrn{P}$ is a $d$-pattern, then $\Hs{H}$ has finite width. We denote by $\CC{X}=X(\Hs{H})$ the \CCC constructed from the pocset $\Hs{H}$. 
Note that the dimension of $\CC{X}$ is at most $d$. 

Note that the map $\Hypmap{\phi}$ sending $\utrk{t}\in\ptrn{P}$ to the hyperplane $\{\hs{h}_{\utrk{t}},\comp{\hs{h}_{\utrk{t}}}\} \in \Hyp{H}=\Hyp{H}(\CC{X})$ is not injective. 

\begin{definition}[parallelism]
	Two tracks of a pattern are \emph{parallel} if they define the same halfspaces. In other words if they have the same image under the map $\Hypmap{\phi}$.
\end{definition}

\subsection{Resolutions} \label{resolutions}
Let $\gp{G}$ be a finitely presented group and  $\simp{K}$ be a finite triangle complex such that $\gp{G} = \pi_1(\simp{K})$.
Given an action of $G$ on $\CC{X}$ a $d$-dimensional \CCC, we can associate a (non canonical) $d$-pattern $\ptrn{P}$ on $\simp K$ in the following way.

First build $\varphi$ a $\gp{G}$-equivariant map from $\usimp{K}$ the universal cover of $\simp{K}$ to $\CC{X}$ by arbitrarily assigning an image for a representative of each orbit of vertices of $\usimp{K}$, and then extending $\gp{G}$-equivariantly to all vertices, edges and triangles. 
The pullback of the hyperplanes of $\CC{X}$ is a $\gp{G}$-equivariant pattern on $\usimp{K}$ that induces a pattern $\ptrn{P}$ on $\simp{K}$.

As describe previously, the pattern $\ptrn{P}$ is associated to a pocset structure and a \CCC $\CC{X}'$ called a \emph{resolution} of $\CC{X}$. This resolution is naturally endowed with a $\gp{G}$-equivariant map to $\CC{X}$.

 Proofs and more properties of resolutions can be found in \cite{BeLa16,BeLa16b}.

\subsection{Intervals, crosses, meets and joins}

	Let $\CC{X}$ be a CAT(0) cube complex, and let $\CCv{x},\CCv{y}$ be two vertices in $\CC{X}$. The \emph{interval}  $\Int{I}=[\CCv{x},\CCv{y}]$ spanned by $\CCv{x}$ and $\CCv{y}$ is the poset of all halfspaces satisfying $\CCv{x}\in\hs{h}$ and $\CCv{y}\in\comp{\hs{h}}$.

\begin{remark}
	We remark that usually the interval is defined to be the $L^1$ convex hull of $\CCv{x}$ and $\CCv{y}$.
	For an interval $\Int{I}$ the set $\{\hs{h},\comp{\hs{h}} | \hs{h}\in\Int{I}\}$ is naturally a pocset.
	The associated cube complex is isomorphic to the $L^1$ convex hull of $\CCv{x}$ and $\CCv{y}$ in $\CC{X}$.
\end{remark}

	A \emph{cross} in a cube complex $\CC{X}$ is a collection of pairwise crossing hyperplanes.
	Similarly, a \emph{cross} in an interval $\Int{I}$ is a pairwise incomparable collection of halfspaces.
	The dimension of a cross is its size.

	Let $\Int{I}$ be an interval. On the set of crosses of $\Int{I}$ we define the \emph{meet} (denoted $\meet$) and  \emph{join} (denoted $\join$) operations by:
	\begin{itemize}
		\item $\CCc{C} \meet \CCc{C'} = \left\{ \hs{h} \in \CCc{C} \cup \CCc{C}' | \nexists \hs{k} \in \CCc{C} \cup \CCc{C}',~ \hs{k} < \hs{h} \right\}$.
		\item $\CCc{C} \join \CCc{C'} = \left\{ \hs{h} \in \CCc{C} \cup \CCc{C}' |\nexists \hs{k} \in \CCc{C} \cup \CCc{C}',~ \hs{k} > \hs{h} \right\}$.
	\end{itemize}

By definition, the meet and join are again crosses in the interval $\Int{I}$.
%The fact that the meet and the join are crosses is a direct application of the Diagonal rule.

\begin{observation}
	With respect to these operations the set of crosses of $\Int{I}$ form a (distributive) lattice.
	
	Moreover $\#\CCc{C}+ \#\CCc{C'} \leq \#\left(\CCc{C} \meet \CCc{C'}\right) + \#\left( \CCc{C} \join \CCc{C'}\right)$, and  $\CCc{C}\cup \CCc{C'} = \left(\CCc{C} \meet \CCc{C'}\right) \cup \left( \CCc{C} \join \CCc{C'}\right)$.
	
\end{observation}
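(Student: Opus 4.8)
The statement bundles three claims: that the crosses of $\Int{I}$, together with the operations $\meet$ and $\join$, form a distributive lattice; the supermodular size inequality $\#\CCc{C}+\#\CCc{C'}\le\#(\CCc{C}\meet\CCc{C'})+\#(\CCc{C}\join\CCc{C'})$; and the set identity $\CCc{C}\cup\CCc{C'}=(\CCc{C}\meet\CCc{C'})\cup(\CCc{C}\join\CCc{C'})$. The plan is to prove the set identity first, deduce the inequality from it by a counting argument, and then verify the lattice axioms.

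For the set identity, the inclusion ``$\supseteq$'' is immediate, since by definition both $\CCc{C}\meet\CCc{C'}$ and $\CCc{C}\join\CCc{C'}$ are subsets of $\CCc{C}\cup\CCc{C'}$. For ``$\subseteq$'', let $\hs{h}\in\CCc{C}\cup\CCc{C'}$. If $\hs{h}$ is a minimal element of the sub-poset $\CCc{C}\cup\CCc{C'}$ then $\hs{h}\in\CCc{C}\meet\CCc{C'}$ and we are done; otherwise pick $\hs{k}\in\CCc{C}\cup\CCc{C'}$ with $\hs{k}<\hs{h}$, and assume without loss of generality that $\hs{h}\in\CCc{C}$, whence $\hs{k}\notin\CCc{C}$ (as $\CCc{C}$ is an antichain) and so $\hs{k}\in\CCc{C'}$. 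If $\hs{h}$ were not maximal there would be $\hs{l}\in\CCc{C}\cup\CCc{C'}$ with $\hs{h}<\hs{l}$; again $\hs{l}\notin\CCc{C}$, so $\hs{l}\in\CCc{C'}$, and then $\hs{k}<\hs{h}<\hs{l}$ exhibits two comparable elements of the antichain $\CCc{C'}$, a contradiction. Hence $\hs{h}$ is maximal, i.e.\ $\hs{h}\in\CCc{C}\join\CCc{C'}$.

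For the inequality, combine inclusion--exclusion with the identity just proved: $\#(\CCc{C}\meet\CCc{C'})+\#(\CCc{C}\join\CCc{C'})=\#\big((\CCc{C}\meet\CCc{C'})\cup(\CCc{C}\join\CCc{C'})\big)+\#\big((\CCc{C}\meet\CCc{C'})\cap(\CCc{C}\join\CCc{C'})\big)=\#(\CCc{C}\cup\CCc{C'})+\#\big((\CCc{C}\meet\CCc{C'})\cap(\CCc{C}\join\CCc{C'})\big)$, and since $\#(\CCc{C}\cup\CCc{C'})=\#\CCc{C}+\#\CCc{C'}-\#(\CCc{C}\cap\CCc{C'})$, it suffices to prove $\#(\CCc{C}\cap\CCc{C'})\le\#\big((\CCc{C}\meet\CCc{C'})\cap(\CCc{C}\join\CCc{C'})\big)$. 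This follows from the stronger inclusion $\CCc{C}\cap\CCc{C'}\subseteq(\CCc{C}\meet\CCc{C'})\cap(\CCc{C}\join\CCc{C'})$: if $\hs{h}\in\CCc{C}\cap\CCc{C'}$ and some $\hs{k}\in\CCc{C}\cup\CCc{C'}$ satisfied $\hs{k}<\hs{h}$, then $\hs{k}$ lies in $\CCc{C}$ or in $\CCc{C'}$, and in either case is comparable to $\hs{h}$ inside that antichain, which is impossible; hence $\hs{h}$ is minimal, and dually maximal.

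It remains to treat the lattice structure, which I expect to be the main obstacle. Commutativity and idempotency of $\meet$ and $\join$ are read off from the formulas. Associativity follows from the observation that, for crosses $\CCc{A},\CCc{B},\CCc{C}$, both $(\CCc{A}\meet\CCc{B})\meet\CCc{C}$ and $\CCc{A}\meet(\CCc{B}\meet\CCc{C})$ equal the set of minimal elements of $\CCc{A}\cup\CCc{B}\cup\CCc{C}$ --- an element of $\CCc{A}\cup\CCc{B}$ that is already non-minimal in $\CCc{A}\cup\CCc{B}$ cannot become minimal in a larger poset, so discarding it before the operation with $\CCc{C}$ changes nothing --- and dually for $\join$. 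Ordering crosses by $\CCc{C}\le\CCc{C'}$ iff $\CCc{C}\meet\CCc{C'}=\CCc{C}$, one then checks the absorption laws and distributivity. The place where the interval hypothesis enters is the fact that any two halfspaces of $\Int{I}$ are nested or cross: since both contain $\CCv{x}$ and both avoid $\CCv{y}$, neither $\hs{h}\cap\hs{k}$ nor $\comp{\hs{h}}\cap\comp{\hs{k}}$ can be empty, so the only alternatives to crossing are $\hs{h}\subseteq\hs{k}$ or $\hs{k}\subseteq\hs{h}$; this is the structural input one leans on when verifying absorption and distributivity (alternatively, one may phrase the argument in terms of the down-sets of the poset $\Int{I}$, which visibly form a distributive lattice). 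This axiom-checking --- rather than the purely formal identities above, which hold for antichains in an arbitrary poset --- is where the genuine work lies.
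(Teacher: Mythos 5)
Your proofs of the set identity $\CCc{C}\cup\CCc{C}'=(\CCc{C}\meet\CCc{C}')\cup(\CCc{C}\join\CCc{C}')$ and of the supermodular inequality are correct and cleanly argued: the ``minimal or maximal'' dichotomy for the set identity, and the reduction via inclusion--exclusion to the inclusion $\CCc{C}\cap\CCc{C}'\subseteq(\CCc{C}\meet\CCc{C}')\cap(\CCc{C}\join\CCc{C}')$, are exactly right. Your observation that $(\CCc{A}\meet\CCc{B})\meet\CCc{C}$ and $\CCc{A}\meet(\CCc{B}\meet\CCc{C})$ both equal the minimal elements of $\CCc{A}\cup\CCc{B}\cup\CCc{C}$ is also correct, so $\meet$ (and dually $\join$) is associative, commutative and idempotent; each is a semilattice operation.

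The step you defer --- ``one then checks the absorption laws and distributivity'' --- is the one that fails, and it fails already in the simplest interval. Take a single square, with two crossing hyperplanes whose halfspaces $\hs{a},\hs{b}$ form the interval $\Int{I}=\{\hs{a},\hs{b}\}$ (incomparable). Put $\CCc{C}=\{\hs{a}\}$ and $\CCc{C}'=\{\hs{b}\}$. Then $\CCc{C}\meet\CCc{C}'=\CCc{C}\join\CCc{C}'=\{\hs{a},\hs{b}\}$, so
\[
\CCc{C}\meet\bigl(\CCc{C}\join\CCc{C}'\bigr)=\{\hs{a}\}\meet\{\hs{a},\hs{b}\}=\{\hs{a},\hs{b}\}\neq\{\hs{a}\}=\CCc{C}.
\]
Thus absorption fails, and the two semilattice operations do not come from a common partial order: $\meet$ gives infima for the order $\CCc{C}\le\CCc{C}'\iff\CCc{C}\meet\CCc{C}'=\CCc{C}$, but with respect to that same order $\join$ does not give suprema (here $\CCc{C}\join\CCc{C}'=\{\hs{a},\hs{b}\}$ is actually \emph{below} $\CCc{C}$). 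Nor is the interval hypothesis what rescues this, as you hoped; the counterexample lives in an honest interval. The ``distributive lattice'' clause of the Observation is therefore not literally correct with these operations (it is perhaps a loose echo of the genuine distributive lattice of down-sets, where $\join$ as defined is the correct supremum but the companion infimum is the maximal elements of $\downarrow\CCc{C}\cap\downarrow\CCc{C}'$, not your $\meet$). Fortunately, nothing in the paper uses the lattice structure --- the proof of the subsequent lemma invokes only the cardinality inequality and the decomposition of $\CCc{C}\cup\CCc{C}'$, both of which you proved correctly --- so the substantive content of the Observation is established by your argument; just drop the attempt to verify absorption.
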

% !TeX root = main.tex
\section{Intercrosses and Countercrosses}

Let $\Int{I}$ be an interval. Let $\hs{h}<\hs{k}$ be two halfspaces of $\Int{I}$. 
We say that $\hs{h}$ and $\hs{k}$ are \emph{adjacent} if there is no halfspace $\hs{t}$ such that $\hs{h}<\hs{t}<\hs{k}$. An \emph{\intc} with respect to $\hs{h}<\hs{k}$ is a (non empty) cross $\CCc{C} \subset \Int{I}$ decomposed as two disjoints sets $\CCc{C} = \CCc{H} \cup \CCc{K}$ such that
\begin{itemize}
%\item the sets $\CCc{H}$ and $\CCc{K}$ are not empty,
\item  every element of $\CCc{H}$ is transverse to $\hs{h}$,
\item  every element of $\CCc{K}$ is transverse to $\hs{k}$ and disjoint from $\hs{h}$.
\end{itemize}

Let $\hs{h}<\hs{k}$ be two halfspaces in $\Int{I}$ and let $\CCc{C} = \CCc{H} \cup \CCc{K}$ be an \intc for $\hs{h}$ and $\hs{k}$. A \emph{\countc} is a cross $\CCc{C}' \subset \Int{I}$ such that:
\begin{itemize}
\item $\# \CCc{C} < \# \CCc{C}'$,
\item if $\CCc{K} \neq \emptyset$, there exist elements $\hs{k}'\in\CCc{C}'$ and $\hs{k}\in\CCc{K}$ such that $\hs{k'}\le\hs{k}$,
\item if $\CCc{K} = \emptyset$, then $\hs h\in\CCc{C}'$,
\item there exists an element $\hs{k}'\in\CCc{C}'$ such that $\hs{k}'\ge\hs h$.
\end{itemize}

%If $\hs{h}>\hs{k}$, a cross is a $\hs{h}$-\emph{\countc} for $\CCc{C}$, if it is a $\hs{h}$-\emph{\countc} for the reverse order on halfspaces.

%When it will be obvious from context, we will write \countc instead of $\hs{h}$-\countc.

Given two halfspaces $\hs{h}<\hs{k}$, we say that $\hs{k}$ is \emph{locally parallel} to $\hs{h}$ if they are adjacent and for any \intc $\CCc{C}$ between them and any other adjacent pair $\hs{h}<\hs{k'}$ admits an \intc of dimension greater or equal to the one of $\CCc{C}$.

We emphasize the fact that these definitions are oriented. In particular, if $\hs{k}$ is locally parallel to $\hs{h}$ in $\Int{I}$, it does not imply that $\comp{\hs{h}}$ is locally parallel to $\comp{\hs{k}}$ with respect to the inverse orientation of $\Int{I}$.

%Most of the time we will omit to precise if the local parallelism is from above or from below.

%A pair $(\hs{h}, \hs{k})$ is \adjP if $\hs{k}$ is \adjP to $\hs{h}$. \todo{why do we need this line?}\todo{I just wanted to fix the asymmetry in the notation}

\begin{lemma}\label{lemma0}
Let $\Int{I}$ be an interval, and let $\hs{h}$ be a non-maximal halfspace. Then there exists an adjacent halfspace $\hs{k}>\hs h$, for which any \intc admits a \countc.

%By symmetry, if $\hs{h}$ is a a non-minimal halfspace, there exist $\hs{k}$, adjacent to and below $\hs h$, for which any \intc admits a $\hs h$-\countc.
\end{lemma}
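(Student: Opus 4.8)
The plan is to fix the non-maximal halfspace $\hs h$ and look for the ``right'' candidate for $\hs k$ among the halfspaces adjacent to and above $\hs h$. Such a halfspace exists because $\hs h$ is non-maximal: take any halfspace strictly above $\hs h$ and pass to a minimal one with this property (using local finiteness of the interval, or rather the fact that intervals in \CCC{}s have finite-length chains between comparable halfspaces). Among all such adjacent $\hs k > \hs h$, I would choose one, call it $\hs k_0$, that maximizes the dimension of its largest \intc. The point of this choice is that it makes $\hs k_0$ ``locally parallel'' to $\hs h$ in the sense defined just before the lemma, and I expect local parallelism to be exactly the property that lets the meet/join exchange argument go through.

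Next, given an arbitrary \intc $\CCc{C} = \CCc{H} \cup \CCc{K}$ for $\hs h < \hs k_0$, I would build a \countc $\CCc{C}'$. The natural move is to enlarge $\CCc{C}$ by throwing in a halfspace that ``goes around'' $\hs k_0$. Concretely, consider the halfspace $\hs h$ itself together with $\CCc{C}$: since every element of $\CCc{H}$ is transverse to $\hs h$ it is incomparable with $\hs h$, and every element of $\CCc{K}$ is disjoint from $\hs h$, so $\hs k \le \hs h$ would be needed for comparability — but $\hs h < \hs k \le \hs k_0$ forces the element of $\CCc{K}$ to lie above $\hs h$... here I need to be careful: elements of $\CCc{K}$ are transverse to $\hs k_0$ and disjoint from $\hs h$, so they are either above $\hs h$ or equal to $\comp{\hs h}$; in the former case $\CCc{C} \cup \{\hs h\}$ is still a cross only if no element of $\CCc{K}$ lies below... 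Let me instead use the meet/join observation directly: set $\CCc{C}' = (\CCc{C} \meet \{\hs h\}) $ or $\CCc{C} \join \{\hs h\}$ as appropriate, or more robustly, take a second adjacent pair $\hs h < \hs k_1$ with $\hs k_1 \neq \hs k_0$ if one exists, use local parallelism to get an \intc $\CCc{D}$ for $\hs h < \hs k_1$ with $\#\CCc{D} \ge \#\CCc{C}$, and then form $\CCc{C} \meet \CCc{D}$ and $\CCc{C} \join \CCc{D}$; by the Observation their sizes add up to at least $\#\CCc{C} + \#\CCc{D} \ge 2\#\CCc{C}$, so the larger of the two has size $> \#\CCc{C}$ unless both equal $\#\CCc{C}$, and in the equality case one checks that one of them already satisfies the \countc conditions (it contains something $\le$ an element of $\CCc{K}$ and something $\ge \hs h$, since $\CCc{C}$ and $\CCc{D}$ share the feature of interacting with $\hs h$). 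If instead $\hs k_0$ is the unique halfspace adjacent to and above $\hs h$, then $\CCc{C} \cup \{\hs h\}$ works: $\hs h$ is incomparable with everything in $\CCc{H}$ (transversality), and any element of $\CCc{K}$ that were $\le \hs h$ would give $\hs h < \hs k_0$ not adjacent — wait, $\hs k \le \hs h < \hs k_0$ contradicts $\hs k$ being disjoint from $\hs h$ unless $\hs k = \comp{\hs h}$, which is again excluded — so $\CCc{C} \cup \{\hs h\}$ is a cross of size $\#\CCc{C}+1$, and it trivially satisfies the \countc axioms (it contains $\hs h$, hence something $\ge \hs h$; and if $\CCc{K} \neq \emptyset$ its elements lie above $\hs h$, while $\hs h \le$ itself gives nothing — I'd use that some $\hs k' = \hs h \le \hs k$ is false; rather in the $\CCc{K}\neq\emptyset$ case I would adjoin a halfspace just below the relevant $\hs k \in \CCc{K}$ instead of $\hs h$).

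The main obstacle I anticipate is precisely this bookkeeping: verifying that the enlarged cross $\CCc{C}'$ is genuinely a cross (pairwise incomparable) and simultaneously meets all four bulleted \countc conditions, splitting into the cases $\CCc{K} = \emptyset$ versus $\CCc{K} \neq \emptyset$. The conceptual content — existence of an adjacent $\hs k_0$, and the dimension-maximizing choice forcing local parallelism, and the meet/join inequality producing a strictly larger cross — is straightforward from the Observation and from local finiteness; the delicate part is the poset-relation casework showing the candidate we write down lands in the right region relative to $\hs h$ and $\hs k_0$. I would organize the proof by first disposing of the ``unique adjacent halfspace above $\hs h$'' case by hand, and then in the general case running the meet/join exchange against a second adjacent pair, invoking local parallelism of $\hs k_0$ to guarantee the comparison \intc $\CCc{D}$ is at least as large as $\CCc{C}$.
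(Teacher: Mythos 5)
Your proposal does not follow the paper's argument and, as written, contains gaps that would need to be closed by essentially reproving the lemma from scratch.

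First, the choice of $\hs{k}_0$. You propose taking the adjacent halfspace whose \emph{largest} \intc has maximal dimension and claim this makes $\hs{k}_0$ \adjP to $\hs{h}$. Reading the definition of \adjP carefully, it is the other way around: $\hs{k}$ is \adjP to $\hs{h}$ exactly when every other adjacent pair $(\hs{h},\hs{k}')$ admits an \intc at least as large as any \intc of $(\hs{h},\hs{k})$, i.e.\ $\hs{k}$ should \emph{minimize} the maximal \intc dimension. More importantly, the notion of \adjP plays no role in the paper's proof of this lemma; it is a separate concept used later (in Lemma~\ref{lemma1staircase}). The \countcer of the present lemma is identified by an entirely different mechanism, and you cannot expect an extremal choice of $\hs{k}_0$ alone to hand it to you.

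Second, the construction of the \countc. The move $\CCc{C}\cup\{\hs{h}\}$ fails outright whenever $\CCc{K}\neq\emptyset$: any $\hs{t}\in\CCc{K}$ is comparable to $\hs{h}$ (indeed $\hs{t}>\hs{h}$, since $\hs{t}$ is ``disjoint from'' $\hs{h}$ and transverse to $\hs{k}_0>\hs{h}$), so $\CCc{C}\cup\{\hs{h}\}$ is not a cross. You notice this near the end and propose to ``adjoin a halfspace just below the relevant $\hs{k}\in\CCc{K}$,'' but no such halfspace is produced, and there is no reason for a candidate to remain incomparable with the rest of $\CCc{C}$. Your fallback --- the $\meet$/$\join$ exchange of $\CCc{C}$ with a single \intc $\CCc{D}$ for one other adjacent pair --- is also insufficient: the Observation only gives $\#(\CCc{C}\meet\CCc{D})+\#(\CCc{C}\join\CCc{D})\ge\#\CCc{C}+\#\CCc{D}$, so when $\#\CCc{D}=\#\CCc{C}$ both outputs may have size exactly $\#\CCc{C}$, and a \countc must be \emph{strictly} larger. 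The equality case is not an edge case to be waved away; it is the heart of the difficulty.

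What the paper actually does is quite different and genuinely more involved. It assumes, toward a contradiction, that for \emph{every} adjacent $\hs{k}_i>\hs{h}$ there is some \intc $\CCc{C}_i=\CCc{H}_i\cup\CCc{K}_i$ with no \countc. After disposing of the easy cases ($\CCc{K}_i=\emptyset$, or no element of $\CCc{H}_i$ strictly below $\hs{k}_i$, each handled by adding a single carefully chosen halfspace), the proof brings \emph{all} the adjacent halfspaces into play simultaneously: it observes that $\left(\bigmeet_i\CCc{C}_i\right)\cap\left(\bigcup_i\CCc{K}_i\right)=\emptyset$, passes to a \emph{minimal} subset $\Hs{K}'$ of adjacent halfspaces with this property, proves a structural claim about proper subsets of $\Hs{K}'$, and then runs an inductive $\meet$/$\join$ exchange that either exhibits a $\join$ strictly larger than some $\CCc{C}_j$ (a \countc) or terminates with $\bigmeet_{\Hs{K}'}\CCc{C}_i$, to which $\hs{h}$ can finally be adjoined. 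The strict size increase is extracted from the interplay of the whole family, not from a pairwise comparison. Your two-pair sketch does not reach this, and the ``bookkeeping'' you defer is not bookkeeping but the missing argument.
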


We call such a halfspace a \emph{\countcer}.

\begin{proof}

Let $\Hs{K} = \left\{\hs{k}_1,\dots, \hs{k}_n\right\}$ be the set of halfspaces adjacent to and above $\hs{h}$.
If one element of $\Hs{K}$ does not share an \intc with $\hs{h}$ then it verifies the Lemma.

Otherwise for each $i$, let $\CCc{C}_i = \CCc{H}_i \cup \CCc{K}_i$ be an \intc for the pair $(\hs{h},\hs{k}_i)$. To prove the lemma we need to show that one of these \intcs admits a \countc.

Notice that if some $\CCc{K}_i$ is empty, then $\CCc{C}_i\cup \left\{\hs h\right\}$ is a \countc for $\CCc{C}_i$. Similarly, if $\CCc{K}_i$ is not empty and there is no halfspace in $\CCc{H}_i$ which is strictly below $\hs{k}_i$, then $\CCc{C}_i\cup \left\{\hs k_i\right\}$ is a \countc for $\CCc{C}_i$. We thus can assume that for all $i$ the set $\CCc{K}_i$ is non-empty and there exists $\hs{s}\in\CCc{H}_i$ such that $\hs{s}<\hs{k}_i$ (and in particular, $\CCc{H}_i$ is non-empty).

%The construction use the following fact:
Notice that for any $\hs{t} \in \bigcup_{\hs{k_i} \in \Hs{K}}\CCc{K_i}$ there exists $j$ such that $\hs{k}_j \leq \hs{t}$, and therefore for some $\hs{s} \in \CCc{H}_j$, we have $\hs{s} < \hs{t}$. This implies that $\left(\bigmeet_{\hs{k_i} \in \Hs{K}}\CCc{C}_i\right)\cap\left(\bigcup_{\hs{k_i} \in \Hs{K}}\CCc{K_i}\right) = \emptyset$.

 Let $\Hs{K}'$ be a minimal subset of $\Hs{K}$ such that $\bigmeet_{\hs{k}_i \in \Hs{K}'} \CCc{C}_i \cap \bigcup_{\hs{k}_i \in \Hs{K}'} \CCc{K}_i = \emptyset$.
 % (where $\CCc{C}_i = \CCc{H}'_i \cup \CCc{K}'_i$ is the \intc associated to $\hs{k}'_i$). 
  \begin{claim}
  For any proper non-empty subset $\Hs{K}'' \subset \Hs{K}'$, there exists $\hs{k}_j \in \Hs{K}'\setminus\Hs{K}''$ such that \[\CCc{K}_j \cap \left( \bigmeet_{\hs{k}_i\in \Hs{K}''} \CCc{C}_i \join \CCc{C}_j\right) \neq \emptyset.\]
  \end{claim}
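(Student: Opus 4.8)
The plan is to argue by contradiction: suppose there is a proper non-empty subset $\Hs{K}'' \subset \Hs{K}'$ such that for every $\hs{k}_j \in \Hs{K}' \setminus \Hs{K}''$ we have $\CCc{K}_j \cap \left(\bigmeet_{\hs{k}_i \in \Hs{K}''}\CCc{C}_i \join \CCc{C}_j\right) = \emptyset$. The goal is to contradict the minimality of $\Hs{K}'$, that is, to show that already $\bigmeet_{\hs{k}_i\in\Hs{K}''}\CCc{C}_i \cap \bigcup_{\hs{k}_i\in\Hs{K}''}\CCc{K}_i = \emptyset$, so $\Hs{K}''$ is a strictly smaller subset with the defining property. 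Write $\CCc{M} = \bigmeet_{\hs{k}_i \in \Hs{K}''}\CCc{C}_i$ for brevity.

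First I would unpack what the hypothesis says. Fix $\hs{k}_j \in \Hs{K}'\setminus\Hs{K}''$. Since $\CCc{K}_j \cap (\CCc{M}\join \CCc{C}_j) = \emptyset$ and (by the observation on meets and joins) $\CCc{M}\cup\CCc{C}_j = (\CCc{M}\meet\CCc{C}_j)\cup(\CCc{M}\join\CCc{C}_j)$, any element of $\CCc{K}_j$ that lies in $\CCc{M}\cup\CCc{C}_j$ must actually lie in $\CCc{M}\meet\CCc{C}_j$. But $\CCc{K}_j \subseteq \CCc{C}_j \subseteq \CCc{M}\cup\CCc{C}_j$, so in fact $\CCc{K}_j \subseteq \CCc{M}\meet\CCc{C}_j$. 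Now an element $\hs{t}\in\CCc{K}_j$ lying in the meet $\CCc{M}\meet\CCc{C}_j$ means there is no halfspace of $\CCc{M}\cup\CCc{C}_j$ strictly below it; in particular no element of $\CCc{M}$ is strictly below $\hs{t}$, and no element of $\CCc{C}_j$ — hence of $\CCc{H}_j$ — is strictly below $\hs{t}$. The second consequence is something I would also want to record as the contradiction-driver in a moment; the first is what lets me push $\hs{t}$ into $\CCc{M}$, as follows. I would show $\CCc{K}_j \subseteq \CCc{M}$: an element $\hs{t}\in\CCc{K}_j$ belongs to $\CCc{M}\meet\CCc{C}_j$, and each element of this meet comes from $\CCc{M}$ or from $\CCc{C}_j$; if $\hs{t}$ came from $\CCc{C}_j$ only (not in $\CCc{M}$) I still have $\hs{t}\in\CCc{M}\meet\CCc{C}_j$, but I want it genuinely in $\CCc{M}$. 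Here I would invoke the structure established in the paragraph preceding the Claim: for every $\hs{t}\in\bigcup_{\hs{k}_i\in\Hs{K}}\CCc{K}_i$ there is some $\ell$ with $\hs{k}_\ell \le \hs{t}$ and then some $\hs{s}\in\CCc{H}_\ell$ with $\hs{s}<\hs{t}$; if $\hs{k}_\ell\in\Hs{K}''$ this $\hs{s}\in\CCc{C}_\ell\subseteq\CCc{M}\cup\CCc{C}_j$ contradicts $\hs{t}\in\CCc{M}\meet\CCc{C}_j$, so $\hs{k}_\ell\notin\Hs{K}''$, i.e. $\hs{k}_\ell\in\Hs{K}'\setminus\Hs{K}''$ — so I would rerun the argument with index $\ell$ in place of $j$.

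The main obstacle — and the step I expect to require the most care — is exactly this last re-indexing: I need to conclude $\CCc{K}_\ell \subseteq \CCc{M}\meet\CCc{C}_\ell$ as well and, crucially, that the element $\hs{s}\in\CCc{H}_\ell$ with $\hs{s}<\hs{t}$ creates a contradiction somewhere. The clean way is: $\hs{t}\in\CCc{K}_\ell$ (since $\hs{k}_\ell\le\hs{t}$ and $\hs{t}$ is disjoint from $\hs{h}$ — it lies above some $\hs{k}_i$ which is disjoint from $\hs{h}$), hence $\hs{t}\in\CCc{M}\meet\CCc{C}_\ell$ by the hypothesis applied to $\ell$; but $\hs{s}\in\CCc{H}_\ell\subseteq\CCc{C}_\ell\subseteq\CCc{M}\cup\CCc{C}_\ell$ and $\hs{s}<\hs{t}$ contradicts $\hs{t}$ lying in the meet $\CCc{M}\meet\CCc{C}_\ell$. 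This contradiction shows no such $\hs{t}$ exists, i.e. $\bigcup_{\hs{k}_i\in\Hs{K}'\setminus\Hs{K}''}\CCc{K}_i$ contributes nothing to $\CCc{M}$; combined with the fact (from before the Claim) that the $\CCc{K}_i$ for $\hs{k}_i\in\Hs{K}''$ also meet $\CCc{M}$ trivially via the same below-an-$\hs{s}$ argument, I get $\CCc{M} \cap \bigcup_{\hs{k}_i\in\Hs{K}'}\CCc{K}_i=\emptyset$, and a fortiori $\CCc{M}\cap\bigcup_{\hs{k}_i\in\Hs{K}''}\CCc{K}_i=\emptyset$, contradicting the minimality of $\Hs{K}'$. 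I would double-check the edge cases $\Hs{K}''$ a singleton (then $\CCc{M}=\CCc{C}_i$ and the join/meet identities degenerate but still hold) and that $\Hs{K}'\setminus\Hs{K}''$ being non-empty is guaranteed since $\Hs{K}''\subsetneq\Hs{K}'$, which is what makes the re-indexing legitimate.
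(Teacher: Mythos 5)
Your opening move matches the paper: negating the claim gives $\CCc{K}_j\cap(\CCc{M}\join\CCc{C}_j)=\emptyset$ for all $\hs{k}_j\in\Hs{K}'\setminus\Hs{K}''$, and since $\CCc{K}_j\subseteq\CCc{C}_j\subseteq\CCc{M}\cup\CCc{C}_j=(\CCc{M}\meet\CCc{C}_j)\cup(\CCc{M}\join\CCc{C}_j)$ this forces $\CCc{K}_j\subseteq\CCc{M}\meet\CCc{C}_j$. But from there you diverge, and several things go wrong. First, the target: you aim to show that $\Hs{K}''$ satisfies the defining property, whereas the hypothesis of the contradiction speaks only about crosses indexed by $\Hs{K}'\setminus\Hs{K}''$, so the natural (and, as it turns out, correct) set to squeeze is $\Hs{K}'\setminus\Hs{K}''$, which is what the paper uses. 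Second, and fatally, the re-indexing step asserts ``$\hs{t}\in\CCc{K}_\ell$ (since $\hs{k}_\ell\le\hs{t}$ \dots)'', but by definition every element of $\CCc{K}_\ell$ is \emph{transverse} to $\hs{k}_\ell$; if $\hs{k}_\ell\le\hs{t}$ then $\hs{t}$ is comparable to $\hs{k}_\ell$ and therefore cannot lie in $\CCc{K}_\ell$. Third, the index $\ell$ produced by the paragraph preceding the Claim ranges over all of $\Hs{K}$, so from $\hs{k}_\ell\notin\Hs{K}''$ you may only conclude $\hs{k}_\ell\in\Hs{K}\setminus\Hs{K}''$, not $\hs{k}_\ell\in\Hs{K}'\setminus\Hs{K}''$, and the contradiction hypothesis gives you nothing about indices outside $\Hs{K}'$. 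Finally, the closing sentence invokes ``the fact (from before the Claim) that the $\CCc{K}_i$ for $\hs{k}_i\in\Hs{K}''$ also meet $\CCc{M}$ trivially via the same below-an-$\hs{s}$ argument'', but the only fact established before the Claim is $\bigmeet_{\hs{k}_i\in\Hs{K}}\CCc{C}_i\cap\bigcup_{\hs{k}_i\in\Hs{K}}\CCc{K}_i=\emptyset$ for the \emph{full} set $\Hs{K}$; the analogous statement for the subset $\Hs{K}''$ is precisely what you set out to prove, so this step is circular.

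The paper's route avoids all of this: from $\CCc{K}_j\subseteq\CCc{M}\meet\CCc{C}_j$ for every $\hs{k}_j\in\Hs{K}'\setminus\Hs{K}''$, one observes that any $\hs{t}\in\bigmeet_{\hs{k}_j\in\Hs{K}'\setminus\Hs{K}''}\CCc{C}_j\cap\bigcup_{\hs{k}_j\in\Hs{K}'\setminus\Hs{K}''}\CCc{K}_j$ would then be minimal in $\CCc{M}\cup\CCc{C}_{j_0}$ (for whichever $j_0$ has $\hs{t}\in\CCc{K}_{j_0}$) and simultaneously minimal in $\bigcup_{\hs{k}_j\in\Hs{K}'\setminus\Hs{K}''}\CCc{C}_j$, hence minimal in $\bigcup_{\hs{k}_i\in\Hs{K}'}\CCc{C}_i$, i.e.\ $\hs{t}\in\bigmeet_{\hs{k}_i\in\Hs{K}'}\CCc{C}_i\cap\bigcup_{\hs{k}_i\in\Hs{K}'}\CCc{K}_i$, contradicting the defining property of $\Hs{K}'$. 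This shows $\Hs{K}'\setminus\Hs{K}''$ already satisfies the property, contradicting minimality of $\Hs{K}'$ --- no re-indexing of $\ell$ and no claim that $\hs{t}$ lands in any $\CCc{K}_\ell$.
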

  \begin{proof}By contradiction, assume that for all $\hs{k}_j \in  \Hs{K}'\setminus \Hs{K}''$, we have $\CCc{K}_j \subset  \bigmeet_{\hs{k}_i\in \Hs{K}''} \CCc{C}_i \meet \CCc{C}_j$. 
  But this implies that
  \[ \bigmeet_{\hs{k}_j\in \Hs{K}'\setminus \Hs{K}''} \CCc{C}_j \cap \bigcup_{\hs{k}_j \in  \Hs{K}'\setminus \Hs{K}''}\CCc{K}_j \subseteq \bigmeet_{\hs{k}_i\in \Hs{K}''} \CCc{C}_i \meet \bigmeet_{\hs{k}_j\in \Hs{K}'\setminus \Hs{K}''} \CCc{C}_j=\bigmeet_{\hs{k}_i \in \Hs{K}'} \CCc{C}_i.\] 
  But since $\Hs{K}'$ verifies $\bigmeet_{\hs{k}_i \in \Hs{K}'} \CCc{C}_i \cap \bigcup_{\hs{k}_i \in \Hs{K}'} \CCc{K}_i = \emptyset$, it would imply that  $\Hs{K}'\setminus \Hs{K}''$ verifies 
  \[\bigmeet_{\hs{k}_j\in \Hs{K}'\setminus \Hs{K}''} \CCc{C}_j \cap \bigcup_{\hs{k}_j \in  \Hs{K}'\setminus \Hs{K}''}\CCc{K}_j = \emptyset,\] which contradicts the minimality of $\Hs{K}'$.
\end{proof}
Let us now construct a \countc for some element of $\Hs{K}'$.
Choose some $\hs{k}_{i_1}\in\Hs{K}'$, and set $\Hs{K}''_1 = \{\hs{k}_{i_1}\}$ and $\CCc{D}_1 = \CCc{C}_{i_1}$.
We will construct subsets $\Hs{K}''_i\subset\Hs{K}'$, of size $i$, and crosses $\CCc{D}_i$ inductively, so that they satisfy:
\begin{itemize}
\item $\CCc{D}_i= \bigmeet _{\hs{k}_j\in\Hs{K}''_i} \CCc{C}_j$,
\item $\#\CCc{D}_i\ge\#\CCc{D}_{i-1}$, and
\item $\Hs{K}''_{i-1}$ is strictly contained in $\Hs{K}''_i$.
\end{itemize}

We construct $\CCc{D}_i$ from $\CCc{D}_{i-1}$ in the following way. By the claim there exists $\hs{k}_j\in\Hs{K}'\setminus\Hs{K}''_{i-1}$ such that an element of $\CCc{K}_j$ belongs to $\CCc{D}_{i-1} \join \CCc{C}_j$. If $\#(\CCc{D}_{i-1} \join \CCc{C}_j)>\#\CCc{C}_j$, then $\CCc{D}_{i-1} \join \CCc{C}_j$ is a \countc for $\CCc{C}_j$, and we are done. 
Otherwise $\#(\CCc{D}_{i-1} \meet \CCc{C}_j)\geq \# \CCc{D}_{i-1}$, and we can define $\Hs{K}''_i = \Hs{K}''_{i-1}\cup\{\hs{k}_j\}$ and accordingly $\CCc{D}_i = \CCc{D}_{i-1} \meet \CCc{C}_j$.
 
If we did not find a \countc in the process, we end up (after $p=\#\Hs{K}'$ steps) with $\Hs{K}''_p=\Hs{K}'$ and $\CCc{D}_{p} = \bigmeet_{\hs{k_i} \in \Hs{K}'}\CCc{C}_i$. But since $\CCc{D}_p \cap \CCc{K}_{i_1}=\emptyset$,  $\hs{h}$ is transverse to every element of  $\CCc{D}_{p}$ and therefore $\CCc{D}_{p}\cup \{\hs{h}\}$ (which satisfies $\#(\CCc{D}_{p}\cup \{\hs{h}\})> \#\CCc{D}_p\ge\ldots\ge\#\CCc{D}_1=\#\CCc{C}_{i_1}$) is a \countc for  $\hs{k}_{i_1}$.
\end{proof}

% !TeX root = main.tex

\section{Reductions for sequences}

The goal of this section is to describe the various reductions we will use when considering sequences of pairs of halfspaces and crosses. We assume throughout that the intervals involved have dimension at most $d$.

%\begin{observation}
%	\label{silly obs}
%	In an interval $\Int{I}$ we will often use the following two observations, which we will call the Diagonal rule and the Sandwich rule.
	%\todo{do we really use them?} \todo{We use them when we do inversions of chains and stuff like this. We can however assume it is obvious, it is only 4 elements, let's not use $i$}
	%\begin{description}
		%\item[Diagonal rule] If $\hs{h}_1,\hs{h}_2,\hs{k}_1,\hs{k}_2$, satisfy, $\hs{h}_i\cross\hs{k}_i$ (for $i=1,2$), $\hs{h}_1\le\hs{h}_2$ and $\hs{k}_1\ge\hs{k}_2$, then $\hs{h}_i \cross \hs{k}_{3-i}$ ($i=1,2$).
		%\item[Sandwich rule] If $\hs{k}_1\le\hs{k}_2\le\hs{k}_3$ and $\hs{k}_1\cross\hs{h}\cross\hs{k}_3$ then $\hs{h}\cross \hs{k}_2$.
		%		\item[Basic] If $\hs{h},\hs{k}$ are \low and \up halfspaces respectively, then $\hs{h} \ngeq \hs{k}$.
		%		\item[Triangle] If $\hs{h}$ is \up and $\hs{k}$ is \low, and if $\hs{h}\cross\hs{k'}\ge\hs{k}$ then $\hs{h}\cross\hs{k}$.
		%		Similarly, $\hs{h}\ge\hs{h'}\cross\hs{k}$ then $\hs{h}\cross\hs{h}$.\todo{is it this way or the opposite?}
		%		\item If $\CCc{C}$ and $\CCc{C'}$ are crosses, then $(\CCc{C}\join\CCc{C}')\llow$ are transverse to $(\CCc{C}\meet\CCc{C}')\lup$.
	%\end{description}
%\end{observation}

 A \emph{chain} of halfspaces is a sequence of halfspaces $(\hs{h}_1,\ldots,\hs{h}_n)$ such that either $\hs{h}_1<\hs{h}_2\ldots<\hs{h}_n$, $\hs{h}_1>\hs{h}_2\ldots>\hs{h}_n$ or $\hs{h}_1=\hs{h}_2=\ldots=\hs{h}_n$. We say that the chain is \emph{increasing}, \emph{decreasing}, or \emph{constant} respectively.
 
 A \emph{chain of $p$-tuples} is a sequence of $p$-tuples of halfspaces $\chainoftuples{t}{p}{1}{n}$ such that for all $1\leq i\leq p$, the sequence $\hs{t}^i_1,\ldots,\hs{t}^i_n$ is a chain.
 
 A sequence of crosses $\chainofcrosses{C}{1}{N}$ is \emph{regularly ordered} if all crosses have same dimension $p$ and if there exists a chain of $p$-tuples $\chainoftuples{t}{p}{1}{n}$, such that $C_i = \left\{\hs{t}^1_i,\dots \hs{t}^p_i\right\}$. It is \emph{regularly increasing}, if non of the chains are decreasing.
 A \emph{subchain} of a regularly ordered sequence of crosses is one of the chains $\hs{t}^j_1,\dots, \hs{t}^j_n$.
% Given a regularly ordered sequence of crosses of dimension $p$, we define $p$ maps $\pi_p$ defined as $\pi_i(j) = \hs{h}^i_j$. 

\begin{observation}
	\label{Ramsey}
	For all $n$ there exists $R(n,d)$ such that any sequence of $R(n,d)$ (not necessarily distinct) halfspaces $\hs{h}_1,\ldots,\hs{h}_{R(n,d)}$ contains a subsequence which is a chain of length $n$.%, $\hs{h}_{i_1}<\ldots<\hs{h}_{i_n}$.
\qed
\end{observation}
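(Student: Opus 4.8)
The plan is to apply Ramsey's theorem to a $4$-colouring of pairs of indices encoding the order relation between halfspaces, and to invoke the dimension bound on $\Int{I}$ to discard one of the four colours.

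First I would reduce to the case $n>d$: a chain of length $n'$ contains a chain of length $n$ whenever $n'\ge n$, so it suffices to prove the statement with $n$ replaced by $m:=\max(n,d+1)$. Given a sequence of halfspaces $\hs{h}_1,\ldots,\hs{h}_N$ of $\Int{I}$, colour each pair $\{i,j\}$ with $i<j$ by one of four colours according to whether $\hs{h}_i<\hs{h}_j$, $\hs{h}_i>\hs{h}_j$, $\hs{h}_i=\hs{h}_j$, or $\hs{h}_i$ and $\hs{h}_j$ are incomparable; since $\Int{I}$ is a poset these four cases are exhaustive and mutually exclusive. Let $R(n,d)$ be the $4$-colour Ramsey number guaranteeing a monochromatic subset of size $m$, and assume $N\ge R(n,d)$. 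Then there are indices $i_1<i_2<\cdots<i_m$ all of whose pairs receive the same colour.

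It remains to examine the four cases. If the common colour is ``$<$'', ``$>$'' or ``$=$'', then $(\hs{h}_{i_1},\ldots,\hs{h}_{i_m})$ is respectively a strictly increasing, strictly decreasing, or constant chain, hence (after truncation to the first $n$ terms) a chain of length $n$, as required. If instead the common colour is ``incomparable'', then $\{\hs{h}_{i_1},\ldots,\hs{h}_{i_m}\}$ is a cross in $\Int{I}$ of dimension $m\ge d+1$, contradicting the standing assumption that the intervals under consideration have dimension at most $d$. So the fourth case cannot occur, and the proof is complete.

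The argument is entirely routine; its only substantive ingredient is that a bound on the dimension of $\Int{I}$ is precisely a bound on the size of antichains in its poset of halfspaces, which is what rules out the ``incomparable'' colour. As remarked in the introduction, the appeal to (very large) Ramsey numbers is exactly why the constants extracted this way are far from optimal: a direct Erd\H{o}s--Szekeres/Dilworth-type argument exploiting width $\le d$ would give a much smaller $R(n,d)$, but such improvements are not needed for the results of this paper.
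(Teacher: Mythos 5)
Your argument is correct and is the standard Ramsey-plus-width argument that the paper clearly has in mind when it marks this observation with \qed and omits the proof. The one thing worth making explicit, which you do implicitly by working ``in the poset'': two halfspaces of an interval $\Int{I}=[\CCv{x},\CCv{y}]$ can never be nested with a complement (both contain $\CCv{x}$ and both complements contain $\CCv{y}$), so ``incomparable'' really does force ``transverse,'' whence a monochromatic incomparable set is a cross and the dimension bound on $\Int{I}$ applies. Outside an interval one would have extra colours (disjoint, covering) and the statement would fail, e.g.\ for arbitrarily many pairwise disjoint halfspaces in a tree, so the standing assumption that all halfspaces lie in an interval of dimension at most $d$ is essential and correctly invoked.
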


By applying Observation \ref{Ramsey} several times one can deduce the following lemmas.

\begin{lemma}\label{Ramseypowerup}
	For every $n$ and every $p$ there exists $N=N(n,d,p)$ such that every sequence $\chainoftuples{t}{p}{1}{N}$ of $N$  $p$-tuples of halfspaces. Then there exist subsequence of $n$ $p$-tuples $\chainoftuples{t}{p}{i_1}{i_n}$ which is a chain of $p$-tuples. \qed
\end{lemma}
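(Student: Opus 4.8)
The plan is to obtain Lemma~\ref{Ramseypowerup} by iterating Observation~\ref{Ramsey} coordinate by coordinate. The point is that a sequence of $p$-tuples is a chain of $p$-tuples precisely when \emph{each} of the $p$ coordinate sequences is a chain; so I would extract a good subsequence for the first coordinate, then refine within that subsequence for the second coordinate, and so on, noting that passing to a subsequence never destroys the chain property already achieved in earlier coordinates.

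Concretely, set $N_0 = n$ and define $N_j = R(N_{j-1}, d)$ for $j = 1, \dots, p$, and let $N = N(n,d,p) := N_p$. Given a sequence $\chainoftuples{t}{p}{1}{N}$ of $N = N_p$ $p$-tuples, apply Observation~\ref{Ramsey} to the sequence of first coordinates $\hs{t}^1_1,\dots,\hs{t}^1_{N_p}$: since $N_p = R(N_{p-1},d)$, there is a subset of $N_{p-1}$ indices on which the first coordinates form a chain. Restrict all $p$-tuples to this index set, and then apply Observation~\ref{Ramsey} to the second coordinates of the $N_{p-1} = R(N_{p-2},d)$ remaining tuples, obtaining a further subset of $N_{p-2}$ indices on which the second coordinates form a chain — while the first coordinates still form a chain, being a subsequence of a chain. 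Iterating through all $p$ coordinates, after the $p$-th step we are left with $N_0 = n$ indices $i_1 < \dots < i_n$ on which every coordinate sequence is a chain, i.e.\ $\chainoftuples{t}{p}{i_1}{i_n}$ is a chain of $p$-tuples, as required.

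There is essentially no obstacle here: the only thing to be careful about is the bookkeeping of the constants (choosing $N_j = R(N_{j-1},d)$ in the right order, so that $R$ is always applied to the number of tuples surviving from the previous stage) and the trivial but necessary remark that a subsequence of a chain is again a chain, so that earlier coordinates stay ``chained'' as we refine. The statement as written is slightly garbled (``every sequence \dots Then there exist''), and in writing the proof I would implicitly read it as: for every $n$ and $p$ there is $N = N(n,d,p)$ such that every sequence of $N$ $p$-tuples of halfspaces contains a subsequence of $n$ $p$-tuples forming a chain of $p$-tuples. This is exactly what the coordinatewise iteration of Observation~\ref{Ramsey} delivers.
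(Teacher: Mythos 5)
Your proof is correct and matches the paper's intended argument: the paper only remarks that the lemma follows ``by applying Observation~\ref{Ramsey} several times,'' and your coordinate-by-coordinate iteration with $N_j = R(N_{j-1},d)$ is precisely that, with the bookkeeping spelled out. Nothing to add.
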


The following lemma, which follows from Ramsey's Theorem, shows that for an increasing chain of pairs one can reduce to one of two extreme scenarios:
\begin{itemize}
	\item a \emph{staircase} is an increasing chain of pairs $\chainofhyps{h}{k}{1}{n}$ such that $\hs k_i > \hs h_j$ for all $i\geq j$ and $\hs k_i \pitchfork \hs h_j$ for all $i<j$,
	
	\item a \emph{ladder} is an increasing chain of pairs $\chainofhyps{h}{k}{1}{n}$ such that $\hs h_i < \hs k_i$ for all $i$ and $\hs{k}_i < \hs h_{i+1}$ for all $i<n$.
\end{itemize}

\begin{lemma}\label{reductionforpairs}
	For every $n$ there exists $N=N(n,d)$ such that for every set of $N$ distinct adjacent pairs of halfspaces $\setofhyps{h}{k}{1}{N}$  there exists a increasing chain sequence of $n$ pairs $\chainofhyps{h}{k}{i_1}{i_n}$ which is either a staircase or a ladder.
\end{lemma}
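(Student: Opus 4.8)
The statement asks, given $N$ distinct adjacent pairs $\{\hs h_i,\hs k_i\}$ in an interval $\Int I$ of dimension at most $d$, to extract a long subsequence that is either a staircase or a ladder (after arranging the pairs into an increasing chain). I would prove this by a sequence of Ramsey-type pigeonhole reductions, much in the spirit of Observation~\ref{Ramsey} and Lemma~\ref{Ramseypowerup}.

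\emph{Step 1: Make the pairs themselves into a chain of pairs.} Each pair $(\hs h_i,\hs k_i)$ is a $2$-tuple of halfspaces with $\hs h_i < \hs k_i$. By Lemma~\ref{Ramseypowerup} (with $p=2$), passing to a subsequence of size $n_1 = N(n_2,d,2)$ gives a chain of $2$-tuples: the $\hs h_i$ form a chain and the $\hs k_i$ form a chain. Since the pairs are distinct and each $\hs h_i<\hs k_i$, after possibly reversing the orientation of $\Int I$ I may assume neither chain is decreasing — i.e. we have a regularly increasing situation. This already produces an increasing chain of pairs $\chainofhyps{h}{k}{}{}$ in the required sense; the issue is only to force one of the two extreme cross-patterns.

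\emph{Step 2: Decide, for each unordered pair of indices $i<j$, the relative position of $\hs k_i$ and $\hs h_j$.} For $i<j$ we have $\hs h_i \le \hs h_j$ and $\hs k_i \le \hs k_j$; the only relation not yet pinned down is between $\hs k_i$ and $\hs h_j$. Since $\hs h_j < \hs k_j$ and $\hs h_i < \hs k_i$, and $\hs h_i\le\hs h_j\le\hs k_j$, the possible relations between $\hs k_i$ and $\hs h_j$ are: $\hs k_i > \hs h_j$ (so $\hs h_i<\hs h_j<\hs k_i<\hs k_j$ roughly), $\hs k_i < \hs h_j$ (ladder-type separation), or $\hs k_i \pitchfork \hs h_j$ (staircase-type transversality); equality $\hs k_i=\hs h_j$ can be ruled out or absorbed since halfspaces come with complements and $\hs h_j \ne \hs k_i$ after discarding the finitely many coincidences, or handled as a degenerate sub-case of one of the three. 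This gives a $3$-coloring of the pairs $\{i,j\}$, $i<j$, of a large index set. By Ramsey's theorem (the finite Ramsey number for $2$-element subsets with $3$ colors), choosing $n_1$ large enough relative to the target length $n$, I extract a subsequence of $n$ indices on which this relation is constant.

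\emph{Step 3: Read off the conclusion.} If the constant color is ``$\hs k_i < \hs h_j$ for all $i<j$'', then in particular $\hs k_i < \hs h_{i+1}$ for all consecutive indices, which is exactly the definition of a ladder. If the constant color is ``$\hs k_i \pitchfork \hs h_j$ for all $i<j$'', then combined with $\hs k_i \ge \hs h_i$ (with equality discarded, so $\hs k_i > \hs h_i$, or equality harmlessly allowed) and $\hs k_i > \hs h_j$ vacuous, we must still have $\hs k_i > \hs h_j$ for $i\ge j$: indeed for $i>j$, $\hs h_j \le \hs h_i < \hs k_i$, so $\hs h_j < \hs k_i$; and for $i=j$, $\hs h_i<\hs k_i$. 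Together with transversality for $i<j$ this is exactly the definition of a staircase. The remaining color ``$\hs k_i > \hs h_j$ for $i<j$'' forces $\hs h_j < \hs k_i$ for \emph{all} $i,j$, which (using $\hs h_j\le\hs h_i$ or $\hs h_i\le\hs h_j$ depending on order) makes all of $\hs h_1,\dots,\hs h_n,\hs k_1,\dots,\hs k_n$ a single chain-like configuration where every $\hs h$ lies below every $\hs k$; but then $\hs k_i>\hs h_j$ holds for $i<j$ \emph{and} $i\ge j$, so this is in fact a staircase with the transversality condition vacuously/trivially satisfied — so it, too, falls under the staircase case. Setting $N(n,d)$ to be $N(R_3(n),d,2)$ where $R_3(n)$ is the relevant Ramsey number completes the proof.

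\emph{Main obstacle.} The only genuinely delicate point is Step~2: carefully enumerating the possible order/transversality relations between $\hs k_i$ and $\hs h_j$ given the constraints $\hs h_i<\hs k_i$, $\hs h_i\le\hs h_j$, $\hs k_i\le\hs k_j$, and arguing that the three (or four, with equality) cases exhaust all possibilities and that each constant-color outcome cleanly yields a staircase or a ladder — in particular checking that the ``$\hs k_i>\hs h_j$ for all $i<j$'' color genuinely collapses into the staircase case rather than being a third kind of configuration. The handling of the finitely many equalities $\hs k_i=\hs h_j$ (discard those indices, at a bounded cost) is routine bookkeeping. Everything else is a direct invocation of the finite Ramsey theorem, already used freely in this section.
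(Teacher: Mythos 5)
Your approach is essentially the same as the paper's: after passing to an increasing chain of pairs via Lemma~\ref{Ramseypowerup}, you Ramsey-color unordered index pairs $\{i,j\}$ with $i<j$ by the relation between $\hs k_i$ and $\hs h_j$, and read off a staircase or a ladder from a monochromatic subset. The paper does this more economically with a $2$-coloring (edge iff $\hs k_i\pitchfork\hs h_j$), and the parenthetical remark about ``$d$ bounds the number of distinct adjacent halfspaces'' is its way of dispatching the degenerate situations you defer to bookkeeping.

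However, your Step~3 contains a genuine error in the analysis of the third color. You claim that the outcome ``$\hs k_i>\hs h_j$ for all $i<j$'' is ``a staircase with the transversality condition vacuously/trivially satisfied.'' That is false: the staircase definition demands $\hs k_i\pitchfork\hs h_j$ for $i<j$, which is a concrete transversality requirement, not a vacuous one, and it is \emph{incompatible} with $\hs k_i>\hs h_j$ (comparable halfspaces are never transverse). So if that color actually occurred, you would get neither a staircase nor a ladder and the argument would break. What saves the day — and what you should prove instead — is that this color is \emph{impossible}: since the $\hs h$-chain is strictly increasing, $i<j$ gives $\hs h_i<\hs h_j$, and if also $\hs h_j<\hs k_i$ then $\hs h_j$ lies strictly between the adjacent pair $\hs h_i<\hs k_i$, contradicting adjacency. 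Once you record this observation, only two colors survive and your Ramsey step yields the desired staircase/ladder dichotomy; the remaining $\hs k_i=\hs h_j$ degeneracy is eliminated by passing to, say, alternate indices, since $\hs k_i\le\hs h_j<\hs h_{j+1}$ forces strictness after skipping.
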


\begin{proof}
	By applying Lemma \ref{Ramseypowerup} we may assume that  $\chainofhyps{h}{k}{1}{N}$ is a chain, and by reordering we may assume that that both subchain are increasing (notice that $d$ bounds the number of distinct adjacent halfspaces to a given halfspace $\hs{h}$).
	Let us consider the graph whose vertices are the pairs $\chainofhyps{h}{k}{1}{N}$, and whose edges are the pairs $(\hs{h}_i,\hs{k}_i)$ and $(\hs{h}_j,\hs{k}_j)$ ($i<j$) such that $k_i$ crosses $h_j$.
	By Ramsey's theorem there exists $N$ such that either there exists a $n$-clique or a $n$-independent set, these correspond to the staircase and ladder scenarios.
\end{proof}

	Given a chain of pairs $\chainofhyps{h}{k}{1}{n}$ and a chain of halfspaces $\hs{t}_1,\ldots,\hs{t}_{n-1}$. we say that the chain of halfspaces is \emph{tame} with respect to the chain of pairs if for all $1\leq i<n$, we have $\hs{t}_i<\hs{h}_{i+1}$. The chain of halfspaces is \emph{wild} if $\hs{t}_i \pitchfork \hs{h}_{j}$ for all $j>i$.

	Let $\chainofhyps{h}{k}{1}{n}$ be a chain of pairs  and $\chainofcrosses{C}{1}{n}$ be a regularly ordered chain of crosses. The chain of crosses is \emph{weakly tame} if for each halfspace $t_i \in C_i$ we have $\hs t_i\not > \hs h_{i+1}$. It is \emph{tame} if one of its subchains is tame. It is \emph{$\CCc{K}$-tame} if for all subchain of halfspaces  $\hs{t}_1,\ldots,\hs{t}_{n-1}$, either the chain is tame or for all $i$, $\hs{t}_i \pitchfork \hs{h}_i$.% \todo{is it what we want?}

Note that in the case of crosses, tame and $\CCc{K}$-tame imply weakly tame. However since $\CCc{K}$ may be empty, $\CCc{K}$-tame does not imply tame.

\begin{lemma}\label{reductionforcrosses}%\label{tameorunbounded}\label{Ramseyforcubes}\label{orderedimpliesincreasing}
	For all $n$ there exists $N=N(n,d)$ such that for every sequence  $\chainofcrosses{C}{1}{N}$ of crosses of dimension $p$ there exists a regularly increasing sequence $\chainofcrosses{D}{i_1}{i_n}$ of crosses of dimension $p$ such that if $\chainofcrosses{C}{1}{N}$ have any of the following properties
	\begin{itemize}
		\item tame;
		\item \intcs;
		\item having a subchain $(\hs{t}_1, \dots \hs{t}_n)$ such that $\hs{t}_i \geq \hs{h}_i$;
	\end{itemize}
	with respect to $\chainofhyps{h}{k}{1}{N}$, then $\chainofcrosses{D}{i_1}{i_n}$ have the same properties with respect to $\chainofhyps{h}{k}{i_1}{i_n}$.
	If moreover $\chainofcrosses{C}{1}{N}$ are weakly tame then one can choose  $\chainofcrosses{D}{i_1}{i_n}$ so that every subchain $(\hs{t}_{i_1}, \dots \hs{t}_{i_n})$ of $\chainofcrosses{D}{i_1}{i_n}$ is either tame of wild.
\end{lemma}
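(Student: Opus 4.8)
The plan is to prove the lemma by iterating Ramsey's theorem, in the forms already recorded in Observation~\ref{Ramsey} and Lemma~\ref{Ramseypowerup}, applied to finite colourings of pairs: each application passes to a subsequence on which one more feature of the configuration becomes homogeneous, after which we check that every property listed in the statement survives passage to subsequences. First I would fix, for each $i$, an arbitrary enumeration $C_i=\{\hs{c}^1_i,\dots,\hs{c}^p_i\}$ and colour each pair $i<j$ by the matrix $\bigl(r(\hs{c}^a_i,\hs{c}^b_j)\bigr)_{a,b\le p}$, where $r(\hs{u},\hs{v})\in\{<,>,=,\pitchfork\}$ records the relation between two halfspaces of the interval (two halfspaces of an interval are always comparable, equal, or transverse). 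Since $p\le d$ there are at most $4^{d^2}$ colours, so once $N$ is large enough Ramsey's theorem yields a subsequence on which this matrix equals a fixed matrix $T$. For each $a$ the halfspaces $\hs{c}^a_{i_1},\hs{c}^a_{i_2},\dots$ are then pairwise in relation $T_{aa}$; were $T_{aa}={\pitchfork}$ they would form a cross of $\Int{I}$ whose size is the length of the subsequence, contradicting $\dim\Int{I}\le d$ as soon as that length exceeds $d$. Keeping the subsequence of length at least $\max(n,d+1)$, every ``column'' $(\hs{c}^a_{i_1},\hs{c}^a_{i_2},\dots)$ is therefore a chain, i.e.\ the subsequence is regularly ordered; after reordering the enumerations and, if necessary, reversing the order of the sequence, we take it to be regularly increasing.

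Next I would check that the three bulleted properties are preserved, each being expressed purely through relations among halfspaces whose indices follow the terms of the sequence. If $(\hs{t}_1,\dots,\hs{t}_{n-1})$ is a tame subchain then for $i_1<i_2<\cdots$ one has $\hs{t}_{i_s}<\hs{h}_{i_s+1}\le\hs{h}_{i_{s+1}}$, since $(\hs{h}_i)$ is increasing, so the restricted subchain is again tame; the same monotonicity shows a wild subchain stays wild, while the condition $\hs{t}_i\ge\hs{h}_i$ and, in the intercross case, the requirements that every element of $\CCc{H}_i$ be transverse to $\hs{h}_i$ and every element of $\CCc{K}_i$ be transverse to $\hs{k}_i$ and disjoint from $\hs{h}_i$ are per-term statements, inherited verbatim. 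The only bookkeeping point is to make the decomposition $C_i=\CCc{H}_i\cup\CCc{K}_i$ compatible with the chosen enumeration: there are at most $2^d$ ways to record which columns lie in $\CCc{H}_i$ and which in $\CCc{K}_i$, so one more pigeonhole step puts us on a subsequence where this record is independent of $i$. Each reduction enlarges the length needed only by a factor depending on $d$, and only boundedly many are used, so $N=N(n,d)$; this proves the first assertion.

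For the ``moreover'' part, which is the subchain analogue of the staircase/ladder dichotomy of Lemma~\ref{reductionforpairs}, suppose $(C_1,\dots,C_N)$ is weakly tame and fix one of the at most $d$ subchains $(\hs{t}_1,\dots,\hs{t}_N)$ of the regular sequence. Colour each pair $i<j$ by $r(\hs{t}_i,\hs{h}_j)$; on a long monochromatic subset $i_1<i_2<\cdots$ the common colour is neither $>$ nor $=$: if $r(\hs{t}_{i_1},\hs{h}_{i_2})$ were $>$ then $\hs{t}_{i_1}>\hs{h}_{i_2}\ge\hs{h}_{i_1+1}$, contradicting weak tameness, and if it were $=$ then $\hs{h}_{i_2}=\hs{t}_{i_1}=\hs{h}_{i_3}$, impossible for a strictly increasing chain. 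Thus the colour is $\pitchfork$, whence the restricted subchain is wild, or $<$, whence it is tame. Restricting to a subsequence preserves regularity and all properties already arranged (including the status of the subchains treated earlier), so handling the subchains one at a time produces a final subsequence on which every subchain is tame or wild.

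The step I expect to be the main obstacle is this first reduction: one must match up the halfspaces of different crosses coherently along the whole sequence, so that ``column'', ``regularly ordered'' and ``subchain'' are well defined. The dimension hypothesis is exactly what forces this to succeed --- a diagonal of pairwise transverse halfspaces would be a cross of unbounded size --- and the remaining difficulty is the bookkeeping required to keep this normalization compatible with the intercross decomposition and with the regularly increasing conclusion, while holding $N$ to a function of $n$ and $d$ alone.
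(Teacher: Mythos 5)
Your Ramsey-based reduction to a regularly ordered subsequence is fine and essentially what the paper does (Lemma~\ref{Ramseypowerup}), and your treatment of the ``moreover'' dichotomy via the colouring $r(\hs{t}_i,\hs{h}_j)$ and weak tameness matches the paper's argument. The gap is in your passage from ``regularly ordered'' to ``regularly increasing''. You write that ``after reordering the enumerations and, if necessary, reversing the order of the sequence, we take it to be regularly increasing''. Neither operation achieves this in general: permuting the column labels $a\in\{1,\dots,p\}$ does not change whether a given column is increasing or decreasing, and reversing the index $i\mapsto N+1-i$ turns every increasing column into a decreasing one and vice versa, so a sequence with a mixture of increasing and decreasing columns stays mixed. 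Moreover, reversing the sequence would also reverse the ambient chain of pairs $\chainofhyps{h}{k}{1}{N}$, which is fixed data and cannot be reordered without destroying the properties (tame, intercross, $\hs{t}_i\ge\hs{h}_i$) that are defined relative to it.

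The paper's resolution is the part you skipped: it does \emph{not} take a subsequence of the $\CCc{C}_i$'s but builds new crosses
\[
\CCc{D}_i \;=\; \{\hs{t}^a_i \mid \text{column }a\text{ non-decreasing}\}\;\cup\;\{\hs{t}^a_{n-i} \mid \text{column }a\text{ decreasing}\},
\]
i.e.\ it flips the decreasing columns in place by reading them from the opposite end, while leaving the non-decreasing columns and the pair chain untouched. Verifying that each $\CCc{D}_i$ is still a cross (the flipped halfspace stays transverse to the others because $\hs{t}^a_i\pitchfork\hs{t}^b_i$ and $\hs{t}^a_{n-i}\pitchfork\hs{t}^b_{n-i}$ sandwich any would-be comparability) and that the three listed properties survive this mixing is the genuine content of the first half of the lemma, and it is not a subsequence argument. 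As written, your proof establishes only that one may pass to a regularly \emph{ordered} subsequence preserving the properties, which is strictly weaker than the claim; the regularly \emph{increasing} conclusion, which later lemmas (e.g.\ Lemma~\ref{vertical horizontal trick}) rely on, is left unproved.
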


\begin{proof}
	By applying Lemma \ref{Ramseypowerup} we may pass to a subsequence of $N'$ crosses which is regularly ordered. By abuse of notation we will assume that $\chainofcrosses{C}{1}{N'}$ are regularly ordered.
	Let us consider the crosses
	$$\CCc{D}_i= \{\hs{t}_i | \{\hs{t}_j\}_j \text{ is non-decreasing}\} \cup \{\hs{t}_{n-i} | \{\hs{t}_j\}_j\text{ is decreasing}\}.$$
	The sequence $\chainofcrosses{D}{1}{N'}$ is a regularly increasing sequence of crosses. Moreover, each of the three properties in the lemma pass on to $\chainofcrosses{D}{1}{N'}$.
	
	As in the proof of Lemma \ref{reductionforpairs}, an application of Ramsey's theorem shows that for $N'$ big enough, we can pass to a subsequence of $n$  crosses, which by abuse of notation we will denote again by $\chainofcrosses{D}{1}{n}$ such that every subchain $(\hs{t}_1, \dots \hs{t}_n)$ of $\chainofcrosses{D}{1}{n}$ is either tame or wild. 
	Since the three properties pass to subsequences they remain true for $\chainofcrosses{D}{1}{n}$. 
	Note that weak tameness is needed to insure that each $\hs t_i$ is below or transverse to $\hs h_{i+1}$, and thus is not above all $\hs h_{j}$.
\end{proof}

\begin{lemma}\label{vertical horizontal trick}
	Let $\chainofhyps{h}{k}{1}{n}$ be a staircase, and let $\chainofcrosses{C}{1}{n}$ be a tame regularly increasing sequence of crosses of dimension $p$ such that any subchain is either tame or wild. Assume that each cross $\CC{C}_i$ contains a halfspace $\hs{s}_i$ such that $\hs{h}_i\leq\hs{s}_i$. Then there exists a regularly increasing  sequence of crosses $\chainofcrosses{D}{1}{n'}$ of dimension $p$ which are tame and $\CCc{K}$-tame \intc, with tame or wild subchains with respect to the chain of pairs $\chainofhyps{h}{k}{2}{2n'}$ of even indices where $n' =\lfloor\frac{ n-1}{2} \rfloor$.
\end{lemma}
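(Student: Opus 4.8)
The plan is to build the crosses $\CCc{D}_i$ from pairs of consecutive crosses of the original sequence, using the staircase structure to convert the halfspaces $\hs{s}_i$ (which sit above $\hs{h}_i$) into the ``$\CCc{K}$-part'' of an \intc for a later pair in the staircase. Concretely, for each odd-to-even step I would pair up $\CC{C}_{2j-1}$ (or the appropriate index) with the pair $(\hs{h}_{2j},\hs{k}_{2j})$: since $\chainofhyps{h}{k}{1}{n}$ is a staircase, we have $\hs{k}_{2j} > \hs{h}_{2j-1}$ and hence $\hs{k}_{2j} > \hs{h}_i$ for all $i < 2j$; meanwhile $\hs{k}_{2j}\pitchfork\hs{h}_{2j}$ fails (they are adjacent with $\hs{h}_{2j}<\hs{k}_{2j}$), so the staircase gives us transversality of $\hs{k}_{2j}$ with the ``earlier'' lower halfspaces. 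The idea is that the halfspace $\hs{s}_{2j-1}\geq\hs{h}_{2j-1}$ in $\CC{C}_{2j-1}$, together with the fact that $\hs{h}_{2j-1}$ is transverse to $\hs{h}_{2j}$ (staircase, $2j-1<2j$) — wait, that is the wrong direction; rather I use $\hs{s}_{2j-1}$ as a candidate element of the $\CCc{K}$-set relative to $(\hs{h}_{2j},\hs{k}_{2j})$ after verifying it lies below $\hs{k}_{2j}$ and is disjoint from $\hs{h}_{2j}$.

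More precisely, the key steps in order would be: (1) Reindex so that $\chainofcrosses{D}{1}{n'}$ is to be constructed relative to the even-indexed pairs $\chainofhyps{h}{k}{2}{2n'}$, with $n'=\lfloor(n-1)/2\rfloor$; the discarding of roughly half the indices is exactly what forces this floor. (2) For each $j$, split $\CC{C}_{2j-1}$ (or a suitable $\CC{C}_i$ with $i$ near $2j$) as $\CCc{H}_j\cup\CCc{K}_j$ where $\CCc{K}_j$ collects those halfspaces of the cross that lie strictly below $\hs{k}_{2j}$ and are disjoint from $\hs{h}_{2j}$, and $\CCc{H}_j$ collects the rest, which I then show are transverse to $\hs{h}_{2j}$. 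This uses the staircase inequalities plus the tame/wild dichotomy on subchains: a tame subchain halfspace $\hs{t}_i$ satisfies $\hs{t}_i<\hs{h}_{i+1}\leq\hs{h}_{2j}$-ish, while a wild subchain halfspace is transverse to all later $\hs{h}_j$; one must check these land in the correct part. (3) Verify that the resulting $\CCc{D}_j$ is genuinely a cross of dimension $p$ (the split doesn't change the underlying set, it just reorganizes it), that it is an \intc for $(\hs{h}_{2j},\hs{k}_{2j})$, and that $\CCc{K}_j\neq\emptyset$ precisely because $\hs{s}_{2j-1}$ provides an element below $\hs{k}_{2j}$. (4) Apply Lemma \ref{reductionforcrosses} once more to the sequence $\chainofcrosses{D}{1}{n'}$ to restore regular increase, \intc-ness, tameness, and the tame-or-wild property of subchains with respect to $\chainofhyps{h}{k}{2}{2n'}$, passing to a further subsequence and shrinking $n'$ accordingly.

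The main obstacle I anticipate is step (2): correctly setting up the $\CCc{H}/\CCc{K}$ decomposition so that every element of $\CCc{H}_j$ is transverse to $\hs{h}_{2j}$ and every element of $\CCc{K}_j$ is both below $\hs{k}_{2j}$ and disjoint from $\hs{h}_{2j}$. The danger is a halfspace $\hs{t}$ of the cross that is disjoint from $\hs{h}_{2j}$ but also disjoint from (not below) $\hs{k}_{2j}$, which would fit in neither part; ruling this out is where the staircase hypothesis ($\hs{k}_i>\hs{h}_j$ for $i\geq j$, $\hs{k}_i\pitchfork\hs{h}_j$ for $i<j$) and the tame-or-wild dichotomy must be combined carefully — and it is also why we can only use every other index, since we need a ``buffer'' pair between the cross we are splitting and the pair it is an \intc for, so that the relevant staircase inequalities are strict in the right direction. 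I would also need the $\CCc{K}$-tameness conclusion, which should follow because each subchain of $\chainofcrosses{D}{}{}$ is either tame or, if wild, its halfspaces are transverse to the corresponding $\hs{h}_{2j}$ by construction of $\CCc{H}_j$. Once the decomposition is set up correctly, steps (3) and (4) are routine bookkeeping plus an invocation of Lemma \ref{reductionforcrosses}.
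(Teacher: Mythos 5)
Your proposal misstates the definition of \intc and, more importantly, misses the key combinatorial device in the paper's proof. An element of the $\CCc{K}$-part of an \intc for $(\hs h,\hs k)$ must be \emph{transverse} to $\hs k$, not ``strictly below'' it as you write in step (2), and this is not merely a slip: it reflects a flawed plan. If you try to split the single cross $\CCc{C}_{2j-1}$ into an $\CCc{H}/\CCc{K}$-decomposition for the pair $(\hs h_{2j},\hs k_{2j})$, you cannot get an \intc at all: any tame halfspace $\hs t\in\CCc C_{2j-1}$ satisfies $\hs t<\hs h_{2j}<\hs k_{2j}$, so $\hs t$ is neither transverse to $\hs h_{2j}$ (it cannot go in $\CCc H$) nor transverse to $\hs k_{2j}$ (it cannot go in $\CCc K$). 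Since the lemma asks for \intcs of the same dimension $p$, you cannot simply discard these halfspaces, and the danger you flag (``a halfspace that is disjoint from $\hs h_{2j}$ but not below $\hs k_{2j}$'') is not the actual obstruction --- halfspaces that lie \emph{below} $\hs h_{2j}$ are. In particular $\hs s_{2j-1}$, when it is tame, lands precisely in this dead zone, so it cannot be the $\CCc K$-element you want.

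The paper's construction is genuinely different: it \emph{mixes} two crosses, setting $\CCc D_i=\CCc C\lwild_i\cup\CCc C\ltame_{i+2}$ and declaring this an \intc for the middle pair $(\hs h_{i+1},\hs k_{i+1})$. The wild part of $\CCc C_i$ is transverse to $\hs h_{i+1}$ by definition of wild, so it serves as $\CCc H$. The tame part of the \emph{later} cross $\CCc C_{i+2}$ is where the $\hs s_j$ hypothesis enters, but not in the role you imagined: each such $\hs t$ is $<\hs h_{i+3}$ by tameness, yet is not $\le\hs h_{i+1}$ because it crosses $\hs s_{i+2}\ge\hs h_{i+2}>\hs h_{i+1}$; combined with the staircase relation $\hs k_{i+1}\pitchfork\hs h_{i+3}$ this forces $\hs t$ to be transverse to $\hs k_{i+1}$ (so serves as $\CCc K$) and also lets one check $\CCc D_i$ is genuinely a cross. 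So $\hs s_{i+2}$ is not itself a $\CCc K$-element; it is the witness that pushes the tame part of the later cross up past $\hs h_{i+1}$. Without this two-cross mixing the argument does not go through, and applying Lemma~\ref{reductionforcrosses} afterwards (your step (4)) cannot repair it, since that lemma only preserves the \intc property, it does not create it.
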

 
\begin{proof}
	Let $\CCc{C}\lwild$ (resp. $\CCc{C}\ltame$) be the set of all wild (resp. tame) halfspaces in $\CCc{C}$. Then the sequence $\chainofcrosses{D}{1}{n-1}$ of crosses which are defined by $\CCc{D}_i = \CCc{C}\lwild_i \cup \CCc{C}\ltame_{i+2}$ are \intcs for $(\hs{h}_{i+1},\hs{k}_{i+1})$. %both $(\hs{h}_i,\hs{k}_i)$ and $(\hs{h}_{i+1},\hs{k}_{i+1})$. 
	The set $\CCc{D}_i$ is a cross because an element in $\CCc{C}\ltame_{i+2}$ cannot be strictly below an element of $\CCc{C}_{i}$ by the regular increasing order on $\CCc{C}_i$, and it cannot be strictly above an element of $\CCc{C}\lwild_i$ since it is below $\hs{h}_{i+3}$ and every element of $\CCc{C}\lwild_i$ crosses $\hs{h}_{i+3}$. 
	Moreover elements of $\CCc{C}\lwild_i$ intersect $\hs{h}_{i+1}$, elements  of $\CCc{C}\ltame_{i+2}$ are not smaller or equal to $\hs h_{i+1}$ since they intersect $\hs{s}_{i+2}$, and cannot be above $\hs{k}_{i+1}$ since  $\hs{k}_{i+1} \pitchfork \hs{h}_{i+3}$. Therefore $\CCc{D}_i$ is an \intc.
	
	Since $\chainofcrosses{C}{1}{n}$ are tame, the chain of crosses with odd indices $\{\CCc{D}_1, \CCc{D}_3, \dots\}$ is tame with respect to the subsequence of $\{(\hs{h}_{2},\hs{k}_{2}),(\hs{h}_{4},\hs{k}_{4}),\ldots\}$ of even indexed pairs. 
	It is also $\CCc{K}$-tame because the only halfspaces that do not intersect $\hs{h}_i$ are coming from $\CCc{C}\ltame_{i}$.
\end{proof}

%\begin{lemma}\label{tametostronglytame}
%	Let $\chainofhyps{h}{k}{1}{n}$ be a chain of adjacent pairs, and let $\chainofcrosses{C}{1}{n}$ be a sequence of regularly increasing sequence of \intcs such that every subchain of halfspaces is either tame or wild. Let $\CCc{C}\ltame$ (resp. $\CCc{C}\lwild$) be the set of all tame (resp. wild) halfspaces in $\CCc{C}$. Then the sequence $\chainofcrosses{D}{2}{n-1}$ of crosses which are defined by $\CCc{D}_i = \CCc{C}\lwild_{i-1} \cup \CCc{C}\ltame_{i}$ are $\CCc{K}$-tame \intcs for $(\hs{h}_{i},\hs{k}_{i})$.
%\end{lemma}

%\begin{proof}
%	The set $\CCc{D}_i$ is a cross as in the proof of Lemma \ref{vertical horizontal trick}. It is $\CCc{K}$-tame because the only halfspaces that do not intersect $\hs{h}_i$ are coming from $\CCc{C}\ltame_{i}$.
%\end{proof}

% !TeX root = main.tex
\section{Bounds on locally parallel pairs of halfspaces}

%\begin{remark}\label{stronglytameimpliestamecountc}
%	If a chain of \intcs is $\CCc{K}$-tame, then the chain of \countcs is tame, and contains a halfspace which is equal or above $\hs h$.
%\end{remark}

\begin{lemma}\label{lemma1staircase}
Given an interval $\Int I$. There exists a constant $C$ depending only on the dimension such that at most $C$ pairs of \adjP halfspaces can form a staircase.
\end{lemma}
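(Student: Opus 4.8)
The plan is to argue by contradiction, combining Lemma~\ref{lemma0} with the Ramsey-type reductions of Section~4. Suppose $\chainofhyps{h}{k}{1}{N}$ is a staircase of \adjP pairs in $\Int I$ with $N$ exceeding the dimension-dependent constants that will emerge below. The organizing observation is that local parallelism of $(\hs h_i,\hs k_i)$ means that an \intc of $(\hs h_i,\hs k_i)$ of maximal dimension realizes the \emph{minimum}, over all halfspaces $\hs k'>\hs h_i$ adjacent to $\hs h_i$, of the maximal dimension of an \intc of $(\hs h_i,\hs k')$; write $\mu(\hs h_i)\le d$ for this minimum. In particular $(\hs h_i,\hs k_i)$ admits no \intc of dimension larger than $\mu(\hs h_i)$. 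By contrast, Lemma~\ref{lemma0} hands us, for each $\hs h_i$, an adjacent $\hs k_i^\sharp>\hs h_i$ every \intc of which admits a \countc; since a maximal \intc of $(\hs h_i,\hs k_i^\sharp)$ has dimension at least $\mu(\hs h_i)$ (the latter being a minimum over all adjacent pairs above $\hs h_i$), such a \countc is a cross $\CCc{E}_i$ with $\#\CCc{E}_i>\mu(\hs h_i)$ that contains a halfspace $\hs s_i\ge\hs h_i$. The contradiction will be produced by transporting the $\CCc{E}_i$ to \intcs of the original pairs, of dimension $>\mu(\hs h_i)$, which cannot exist.

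To carry this out, I would first use Observation~\ref{Ramsey} to pass to a sub-staircase on which $\mu(\hs h_i)\equiv\mu$, then run Lemma~\ref{lemma0} there, then pigeonhole on $\#\CCc{E}_i\in\{\mu+1,\dots,d\}$ to assume all the $\CCc{E}_i$ have one fixed dimension $q\ge\mu+1$. Next I would feed the sub-staircase together with $(\CCc{E}_i)$ into Lemma~\ref{vertical horizontal trick}: its hypotheses ask that the crosses be regularly increasing of fixed dimension, tame, with tame-or-wild subchains, and that each contain a halfspace above the corresponding $\hs h_i$ — and the last of these is exactly the condition $\hs s_i\ge\hs h_i$. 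The regular-increasing and tame-or-wild conditions are then extracted by Lemma~\ref{reductionforcrosses}, which preserves the property ``has a subchain $(\hs t_i)$ with $\hs t_i\ge\hs h_i$'' on subsequences, provided the $\CCc{E}_i$ are already weakly tame with respect to the staircase.

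Granting the input hypotheses, Lemma~\ref{vertical horizontal trick} returns a regularly increasing sequence of crosses of dimension $q$ that are \intcs with respect to the even-indexed pairs $(\hs h_2,\hs k_2),(\hs h_4,\hs k_4),\dots$ of the surviving sub-staircase. Those pairs are still \adjP — local parallelism is an intrinsic property of a pair of halfspaces of $\Int I$ and survives passing to a sub-staircase — so none of them admits an \intc of dimension above $\mu<q$; this is the desired contradiction. It follows that $N$ is bounded by an explicit function of $d$ built from the Ramsey constants appearing in Observation~\ref{Ramsey}, Lemma~\ref{Ramseypowerup}, Lemma~\ref{reductionforcrosses} and Lemma~\ref{vertical horizontal trick}, and this function is the constant $C$ of the statement.

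The step I expect to be the main obstacle is verifying weak tameness of the $\CCc{E}_i$ (and, more broadly, controlling the $\CCc{E}_i$ away from $\hs h_i$): Lemma~\ref{lemma0} only guarantees that $\CCc{E}_i$ straddles $\hs h_i$ and meets the $\CCc{K}$-part of the ambient \intc of $(\hs h_i,\hs k_i^\sharp)$, and says nothing about how its elements sit relative to $\hs h_{i+1},\hs h_{i+2},\dots$, whereas Lemma~\ref{vertical horizontal trick} needs at least weak tameness. The remedy I have in mind is to apply Lemma~\ref{lemma0} not in $\Int I$ but in the sub-interval of $\Int I$ truncated just above $\hs h_{i+1}$, so that no halfspace of that sub-interval — hence no element of the $\CCc{E}_i$ produced there — lies above $\hs h_{i+1}$; one then has to check that a \countcer and a \countc computed in this sub-interval are still a \countcer and a \countc, of the same dimensions, when read back in $\Int I$. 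Making that compatibility precise, and bookkeeping which pair-indices survive the iterated Ramsey reductions, is the technical heart; the remainder is pigeonhole and the ready-made lemmas of Section~4.
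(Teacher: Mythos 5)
Your overall architecture — argue by contradiction, pigeonhole the critical dimension, invoke Lemma~\ref{lemma0} for \countcers, and use Lemma~\ref{vertical horizontal trick} to convert the resulting crosses into \intcs for the original pairs — is the same as the paper's, and the reading of ``\adjP'' as minimizing the maximal \intc dimension over all adjacent pairs above $\hs h_i$ is correct. But the plan has a genuine gap exactly where you flag it, and the remedy you sketch does not repair it.

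The problem is that the \countcs $\CCc{E}_i$ coming directly out of Lemma~\ref{lemma0} have no positional control relative to the staircase: they need not be weakly tame (some element may lie above $\hs h_{i+1}$), and they need not be tame (the element below something in $\CCc K$ need not lie below $\hs h_{i+1}$). Both properties are prerequisites: Lemma~\ref{reductionforcrosses} needs weak tameness to produce the tame-or-wild dichotomy, and Lemma~\ref{vertical horizontal trick} explicitly requires a tame chain of crosses. Your truncation idea does not fix this: the set $\{\hs t\in\Int I:\hs t\not>\hs h_{i+1}\}$ is not in general an interval; even if one replaces $\CCv y$ by a vertex on the $\CCv x$-side of $\hyp h_{i+1}$, adjacency above $\hs h_i$ in the truncated interval is weaker than adjacency in $\Int I$, so the $\hs k_i^\sharp$ returned by Lemma~\ref{lemma0} there may not be adjacent in $\Int I$; and, more seriously, the \countcer property proved in the truncated interval only counters \intcs living inside it, whereas the high-dimensional \intc of $(\hs h_i,\hs k_i^\sharp)$ supplied by local parallelism lives in $\Int I$ and may contain halfspaces above $\hs h_{i+1}$. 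So you cannot invoke the \countcer on it.

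What the paper does, and what is missing from your plan, is a two-stage transport that manufactures tameness \emph{before} applying the \countcer. The staircase structure automatically provides, for each $i$, an adjacent halfspace $\hs l_i>\hs h_i$ with $\hs l_i\le\hs h_{i+1}$; the maximal \intcs $\CCc C_{\hs l,i}$ of the pairs $(\hs h_i,\hs l_i)$ are then \emph{automatically} weakly tame, which makes the Ramsey machinery of Lemma~\ref{reductionforcrosses} and the wild case available from the start. After disposing of the wild case, one uses Lemma~\ref{vertical horizontal trick} once to push these tame crosses into tame \emph{and $\CCc K$-tame} \intcs $\CCc C_{\hs o,i}$ for the \countcer pairs $(\hs h_i,\hs o_i)$; only \emph{then} does one apply the \countcer, and it is the $\CCc K$-tameness of $\CCc C_{\hs o,i}$ that forces the resulting \countcs to have a tame subchain (the element lying below $\CCc K$ is below $\hs h_{i+1}$). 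A second application of Lemma~\ref{vertical horizontal trick} then gives the contradictory high-dimensional \intc for $(\hs h_i,\hs k_i)$. The waypoint $\hs l_i$ and the first pass of the vertical-horizontal trick are the missing idea: you cannot dispense with them and feed raw \countcs directly into Lemma~\ref{vertical horizontal trick}.
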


\begin{corollary}\label{lemma1}
Given an interval $\Int{I}$ and a point $\CCv{m}$ of $\Int{I}$. There exists a constant $C$ depending only on the dimension such that at most $C$ pairs of \adjP halfspaces are separated by $\CCv{m}$.
\end{corollary}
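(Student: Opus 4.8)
The plan is to deduce the corollary from Lemma \ref{lemma1staircase} together with the structural reduction for increasing chains of adjacent pairs, Lemma \ref{reductionforpairs}. Suppose for contradiction that there are more than $N(C_0+1, d)$ pairs of \adjP halfspaces separated by the point $\CCv m$, where $C_0$ is the constant produced by Lemma \ref{lemma1staircase} and $N(\cdot,d)$ is the function from Lemma \ref{reductionforpairs}. Each such pair $(\hs h, \hs k)$ is an adjacent pair, so the hypotheses of Lemma \ref{reductionforpairs} apply, and we obtain a subcollection of $C_0+1$ pairs $\chainofhyps{h}{k}{i_1}{i_{C_0+1}}$ which is either a staircase or a ladder. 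In the staircase case we are immediately done: this contradicts Lemma \ref{lemma1staircase}, since $C_0$ bounds the length of any staircase of \adjP pairs. So the remaining work is to rule out — or rather, to convert — the ladder case.

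The key observation that handles the ladder case is that a ladder cannot be separated by a single point $\CCv m$. Recall that in a ladder $\chainofhyps{h}{k}{1}{n}$ we have $\hs h_i < \hs k_i < \hs h_{i+1}$ for all $i$, so the halfspaces $\hs h_1 < \hs k_1 < \hs h_2 < \hs k_2 < \cdots$ form a strictly increasing chain in the interval $\Int I$. A point of $\Int I$ is an ultrafilter on the pocset, and it separates the pair $(\hs h_i, \hs k_i)$ precisely when it contains $\hs h_i$ but not $\hs k_i$, i.e.\ it contains $\comp{\hs k_i}$. But if $\CCv m$ separates both $(\hs h_i, \hs k_i)$ and $(\hs h_{i+1}, \hs k_{i+1})$, then $\CCv m$ contains $\hs h_{i+1}$ while also containing $\comp{\hs k_i}$; since $\hs k_i < \hs h_{i+1}$ we have $\comp{\hs h_{i+1}} < \comp{\hs k_i}$, i.e.\ $\comp{\hs k_i}$ and $\hs h_{i+1}$ are nested the wrong way, so no consistent ultrafilter contains both $\comp{\hs k_i}$ and $\hs h_{i+1}$ unless they are comparable in a compatible way — in fact $\hs h_{i+1} \subsetneq \hs k_i$ is false and $\comp{\hs k_i} \cap \hs h_{i+1} = \emptyset$, a contradiction. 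Hence at most one pair of a ladder can be separated by $\CCv m$. Consequently the subcollection produced by Lemma \ref{reductionforpairs} must be a staircase once $C_0 + 1 \ge 2$, and we reduce to the previous paragraph.

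Putting these together: set $C = N(C_0+1,d)$ with $C_0$ the constant from Lemma \ref{lemma1staircase}; this depends only on $d$. If more than $C$ pairs of \adjP halfspaces were separated by $\CCv m$, Lemma \ref{reductionforpairs} would yield a staircase of $C_0+1$ \adjP pairs (the ladder alternative being excluded by the separation argument), contradicting Lemma \ref{lemma1staircase}. The main obstacle I anticipate is purely bookkeeping: making sure that the separation-by-$\CCv m$ hypothesis is genuinely incompatible with the ladder configuration, which comes down to the elementary fact that an ultrafilter (a vertex of the cube complex) cannot contain two halfspaces $\hs a, \hs b$ with $\hs a \cap \hs b = \emptyset$; once that is in hand, everything else is a routine application of the already-established reduction lemmas.
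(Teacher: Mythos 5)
Your overall structure is exactly the paper's: apply Lemma \ref{reductionforpairs} to obtain a staircase or a ladder, observe that separation by the single point $\CCv{m}$ rules out the ladder, and then invoke Lemma \ref{lemma1staircase}. However, the detailed argument you give for ruling out the ladder has the orientations reversed, and as written it is not correct.

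Concretely: a pair $\hs{h}_i<\hs{k}_i$ in the interval $\Int{I}=[\CCv{x},\CCv{y}]$ has $\CCv{x}\in\hs{h}_i\subset\hs{k}_i$, so the point $\CCv{m}$ separates the pair precisely when $\CCv{m}\in\hs{k}_i\cap\comp{\hs{h}_i}$, i.e.\ $\CCv{m}$ contains $\hs{k}_i$ but not $\hs{h}_i$. You wrote the opposite (``contains $\hs{h}_i$ but not $\hs{k}_i$''), which is impossible for any point since $\hs{h}_i\subset\hs{k}_i$. The downstream step is likewise mirrored: from separating both $(\hs{h}_i,\hs{k}_i)$ and $(\hs{h}_{i+1},\hs{k}_{i+1})$ one should conclude $\CCv{m}\in\hs{k}_i$ and $\CCv{m}\in\comp{\hs{h}_{i+1}}$, and these two halfspaces are genuinely disjoint because $\hs{k}_i<\hs{h}_{i+1}$. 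Your version instead asserts $\comp{\hs{k}_i}\cap\hs{h}_{i+1}=\emptyset$, which is false --- that intersection is precisely the nonempty slab between $\hyp{k}_i$ and $\hyp{h}_{i+1}$. So the ``elementary fact'' you invoke at the end (an ultrafilter cannot contain two disjoint halfspaces) is the right tool, but you applied it to a pair that is not disjoint. Once the orientations are corrected --- use $\hs{k}_i\cap\comp{\hs{h}_{i+1}}=\emptyset$ --- the argument goes through and matches what the paper leaves implicit. Perhaps the cleanest way to say it: the slabs $\hs{k}_j\cap\comp{\hs{h}_j}$ of a ladder are pairwise disjoint, so $\CCv{m}$ lies in at most one, and hence a ladder of length $\ge2$ cannot have every pair separated by $\CCv{m}$.
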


\begin{proof}[Proof of Corollary \ref{lemma1}]
If there is no bound, from Lemma \ref{reductionforpairs}, we can assume, that the pairs of \adjP halfspaces form a staircase or a ladder. But since each of the pairs is separated by $\CCc{m}$ it has to be a staircase. Lemma \ref{lemma1staircase} concludes.
\end{proof}

\begin{proof}[Proof of Lemma \ref{lemma1staircase}]
By contradiction, assume that for any $C$, there exists an interval $\Int I$ and a staircase of \adjP pairs $(\hs{h}_1, \hs{k}_1)\dots (\hs{h}_C, \hs{k}_C)$. %Recall that local parallelism is not symetrical. 
%However, up to taking more pairs and reversing the orientation of $\Int I$, we can assume that $\hs{h}_i < \hs{k}_i$ for all $i$.

For each $1\le i<C$, let $\hs{l}_i > \hs{h}_i$ and $\hs{o}_i > \hs{h}_i$ be halfspaces adjacent to $\hs{h}_i$ such that $\hs{l}_i \leq \hs{h}_{i+1}$ and $\hs{o}_i$ is a \countcer (see Lemma \ref{lemma0}). 
Moreover if $\hs{o}_i \leq \hs{h}_{i+1}$, we assume $\hs{o}_i=\hs{l}_i$. Let $\CCc{C}_{\hs{l},i}$ be an \intc for the pair $(\hs{h}_i, \hs{l}_i)$ of maximal dimension.

Since $\hs{l}_i\le\hs{h}_{i+1}$ for $1\le i<C$, the intercrosses $\left\{\CCc{C}_{\hs{l},1},\ldots,\CCc{C}_{\hs{l},C-1}\right\}$ are weakly tame with respect to $\chainofhyps{h}{k}{1}{C}$. Hence by Lemma \ref{reductionforcrosses} we may assume the following.
\begin{enumerate}
\item \label{l is o} The sequence of halfspaces $\left\{\hs{o}_i\right\}$ is either tame or wild with respect to the chain of pairs $\chainofhyps{h}{k}{1}{C}$. %Either $\hs{o}_i$ below $\hs{h}_{i+1}$ for all $i$, or $\hs{o}_i$ is transverse to $\hs{h}_j$ for $j>i$ \todo{use the terminology tame and wild?}. 
In particular either $\hs{o}_i = \hs{l}_i$ for all $i$ or $\hs{o}_i \neq  \hs{l}_i$ for all $i$.
\item The dimension of the $\CCc{C}_{\hs{l},i}$ is a constant that we denote $p$.
\item \label{reginccubes} The $\CCc{C}_{\hs{l},i}$ are regularly increasing.
\item Either $\CCc{C}_{\hs{l},i}$ contains a halfspace $\geq \hs{h}_i$, or $\CCc{C}_{\hs{l},i}\cup \hs{h}_i$ is a cross (which trivially contains a halfspace $\geq \hs{h}_i$).
\item \label{tameorwildcrosses} Every subchain of halfspaces of the chain of crosses is either tame or wild.%Either $\CCc{C}_{\hs{l},i}$ is tame for all $i$, or every chain of halfspace of $\CCc{C}_{\hs{l},i}$ is transverse to $\hs{h}_j$ for $j>i$ \todo{use wild?}.
\end{enumerate}
%The points \ref{l is o} and \ref{reductionforcrosses} follow from Lemma \ref{tameorunbounded}. The point \ref{reginccubes} follows from Lemmas \ref{Ramseyforcubes} and \ref{orderedimpliesincreasing}.

Note that since  $\CCc{C}_{\hs{l},i}$ is a \intc of maximal dimension for $\hs{h}_i$ and $\hs{l}$, by definition of \adjP the pair $(\hs{h}_i,\hs{k}_i)$ share no \intc of dimension $>p$.

If the $\CCc{C}_{\hs{l},i}$ are wild, then $\CCc{C}_{\hs{l},i} \cup \left\{\hs{h}_{i+2}\right\}$ is a \intc for the pair $(\hs{h}_{i+1}, \hs{k}_{i+1})$ since $\CCc{C}_{\hs{l},i}$ is transverse to $\hs{h}_{i+1}$ and $\hs{h}_{i+2}$ is transverse to $\hs{k}_{i+1}$. But this is a contradiction as the dimension of this \intc is $p+1$.

So the crosses $\CCc{C}_{\hs{l},i}$ are tame.
We first build a tame \intc for the pair $(\hs{h}_i, \hs{o}_i)$.

If $\hs{o}_i = \hs{l}_i$ for all $i$ then $\CCc{C}_{\hs{l},i}$ is the cross that we want. Otherwise the halfspaces $\hs{o}_i$ are wild, i.e, the chain of pairs $\chainofhyps{h}{o}{1}{C}$ form a staircase. 

 %Applying Lemma \ref{tameorunbounded}, we can assume that every subchain of $\left\{\CCc{C}_{\hs{l},i}\right\}$ is tame or wild. 
We can apply Lemma \ref{vertical horizontal trick}, to obtain a regularly increasing sequence of tame and $\CCc{K}$-tame \intc $\CCc{C}_{\hs{o},i}$ for the pairs $(\hs{h}_i,\hs{o}_i)$ for the even indices $1<i<C$.

%Applying Lemma \ref{tametostronglytame} we can assume that $\CCc{C}_{\hs{o},i}$ is $\CCc{K}$-tame. 
As $\hs{o}_i$ are \countcers, we can produce \countcs $\CCc{C}'_{\hs{o},i}$ of dimension $p+1$ for the crosses $\CCc{C}_{\hs{o},i}$. Tameness and $\CCc{K}$-tameness of $\CCc{C}_{\hs{o},i}$ and the definition of the \countc imply that $\CCc{C}'_{\hs{o},i}$ is tame and has an element above $\hs{h}_i$.

Using Lemma \ref{reductionforcrosses}, we can assume that the crosses $\CCc{C}'_{\hs{o},i}$ are regularly increasing and that every subchain of $\left\{\CCc{C}'_{\hs{o},i}\right\}$ is tame or wild.
We can then apply Lemma \ref{vertical horizontal trick}, to get \intcs of dimension $p+1$ for the pairs $(\hs{h}_i,\hs{k}_i)$ when $4|i$, which is a contradiction.
\end{proof}

\begin{lemma}\label{lemmaforlemma2}
	For all $n$ there exists $N=N(n,d)$ such that for every ladder of adjacent halfspaces $\chainofhyps{h}{k}{1}{N}$ and a regularly increasing sequence of $d$-dimensional \intcs $\chainofcrosses{C}{1}{N}$ there is a sequence  $\chainofcrosses{D}{i_1}{i_n}$ of $n$ regularly ordered crosses such that for every subchain $\hs{t}_{i_1},\ldots,\hs{t}_{i_n}$, either for all $1\le r\le n$ the halfspace $\hs{t}_{i_r}$ crosses $\hs{h}_{i_j}$ for all $j$ (in which case we call it \emph{unbounded}), or for all $1\le r\le n$ $\hs{t}_{i_r}$ is between $\hs{h}_{i_{r-1}}$ and $\hs{h}_{i_{r+1}}$ (in which case we call it \emph{bounded}).
\end{lemma}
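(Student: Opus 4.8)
The plan is to run several more rounds of Ramsey‑type reductions (Observation~\ref{Ramsey}, Lemma~\ref{Ramseypowerup}, Lemma~\ref{reductionforcrosses}) on the $d$ subchains of $\chainofcrosses{C}{1}{N}$, forcing each subchain individually into one of the two prescribed shapes and then intersecting the finitely many resulting constraints; since any subsequence of a regularly increasing sequence is again regularly ordered, the ``regularly ordered'' requirement on $\chainofcrosses{D}{i_1}{i_n}$ will come for free.

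First I would normalise the position of each term inside its intercross. Writing $\CCc{C}_i=\CCc{H}_i\cup\CCc{K}_i$ for the decomposition relative to $(\hs{h}_i,\hs{k}_i)$, each of the $d$ subchains has, at index $i$, a term $\hs{t}_i$ lying either in $\CCc{H}_i$ (so $\hs{t}_i\pitchfork\hs{h}_i$) or in $\CCc{K}_i$ (so $\hs{t}_i\pitchfork\hs{k}_i$, and since $\hs{h}_i<\hs{k}_i$ are adjacent, $\hs{t}_i>\hs{h}_i$). Colouring $\{1,\dots,N\}$ by the resulting vector in $\{\CCc{H},\CCc{K}\}^{d}$ and pigeonholing, I pass to a subsequence on which every subchain has constant type; by Lemma~\ref{reductionforcrosses} I keep a regularly increasing sequence of $d$‑dimensional intercrosses.

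Next I analyse a single subchain $(\hs{t}_i)_i$, which by regular increasing order is a non‑decreasing chain. As the rungs $\hs{h}_1<\hs{h}_2<\cdots$ form a chain, for each $i$ the set $A_i=\{m:\hs{t}_i>\hs{h}_m\}$ is an initial segment $[1,a_i]$, the set $B_i=\{m:\hs{t}_i<\hs{h}_m\}$ a final segment $[b_i+1,N]$, and the ``interaction window'' $M_i=\{m:\hs{t}_i\pitchfork\hs{h}_m\}=[a_i+1,b_i]$ the interval between them (equality $\hs{t}_i=\hs{h}_m$ is excluded by the type). Monotonicity of the $\hs{t}_i$ gives $A_i\subseteq A_{i+1}$ and $B_{i+1}\subseteq B_i$, so $a_i,b_i$ are non‑decreasing in $i$, and the type pins the window near $i$: one has $a_i<i\le b_i$ in the $\CCc{H}$‑case, while in the $\CCc{K}$‑case adjacency of $\hs{h}_i<\hs{k}_i$ together with $\hs{t}_i\pitchfork\hs{k}_i$ forces $a_i=i$, hence $M_i\subseteq[i+1,b_i]$. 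Applying Observation~\ref{Ramsey} to $(|M_i|)_i$ I may assume either $|M_i|$ is constant, equal to some $L$, or it is strictly increasing. In the first case I take a subsequence $i_1<\dots<i_n$ with all consecutive gaps $>L$; then $i_{r-1}\in A_{i_r}$ and $i_{r+1}\in B_{i_r}$, i.e.\ $\hs{t}_{i_r}$ lies strictly between $\hs{h}_{i_{r-1}}$ and $\hs{h}_{i_{r+1}}$, so the subchain is \emph{bounded}. In the second case I would use $a_i\le i\le b_i$, the monotonicity of $a_i,b_i$, and $b_i-a_i\to\infty$ to choose the subsequence so that the window of its first term already reaches past its last chosen index while the window of its last term still starts before its first chosen index, making every $\hs{t}_{i_r}$ transverse to every $\hs{h}_{i_j}$, i.e.\ \emph{unbounded}.

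Finally, iterating this subchain by subchain — each iteration replacing the current sequence by a sufficiently long subsequence via Lemma~\ref{Ramseypowerup} — and trimming to length $n$ produces the desired $\chainofcrosses{D}{i_1}{i_n}$. I expect the real difficulty to be the ``unbounded'' case of the window‑length dichotomy: one must rule out intermediate behaviours — e.g.\ a window that is a growing prefix $[1,b_i]$ of rungs with $b_i-i$ bounded, or (in the $\CCc{K}$‑case) a window $[i+1,N]$ reaching the top rung — which after any sparsification are neither bounded nor unbounded. This is exactly where the hypothesis that the intercrosses are \emph{$d$‑dimensional} must enter: as in the proof of Lemma~\ref{lemma1staircase}, maximality of a $d$‑dimensional cross in a dimension‑$\le d$ interval forces both $\CCc{H}_i$ and $\CCc{K}_i$ to be nonempty and restricts how far $\hs{t}_i$ can sit above $\hs{h}_i$ while remaining transverse to $\hs{k}_i$, and I would leverage this (together with the fact that the $\CCc{C}_i$ are intercrosses for the ladder rungs) to show that a growing window must grow on both sides, so that the dichotomy is exhaustive.
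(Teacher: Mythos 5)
Your approach is genuinely different from the paper's, and it has a real gap exactly where you flag it. In the ``strictly increasing window'' branch you need to find $i_1<\cdots<i_n$ with $a_{i_n}<i_1$ and $i_n\le b_{i_1}$; but both $a_i$ and $b_i$ are non-decreasing, and the only thing you control is that $b_i-a_i\to\infty$. Nothing prevents, say, $a_i=i-1$ and $b_i=2i$: then $|M_i|=i+1$ is strictly increasing, yet $a_{i_n}<i_1$ forces $i_n\le i_1$, so already $n=2$ is impossible. This is the ``growing prefix'' behaviour you mention, and your suggested rescue via maximality of $d$-dimensional intercrosses is not developed into an argument and does not appear in the paper's proof at all. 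There is also a secondary fragility in the ``iterate subchain by subchain'' step: each sparsification changes the index set, hence changes the windows $M_i$ of every other subchain, so the Erd\H{o}s--Szekeres dichotomy would have to be re-derived for the new, thinned data at each stage rather than intersected.

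The paper sidesteps all of this with a replacement rather than a sparsification. After applying Lemma~\ref{reductionforcrosses} in both orientations of the interval (using that intercrosses along a ladder are weakly tame), one may assume every subchain is tame or wild \emph{in each direction}, giving a four-way partition of each cross $\CCc{C}_i$ into pieces $\CCc{C}_i^{ut,dt}$, $\CCc{C}_i^{uw,dt}$, $\CCc{C}_i^{ut,dw}$, $\CCc{C}_i^{uw,dw}$. For indices $1\le i\le n$ one then sets
\[
\CCc{D}_i = \CCc{C}_i^{ut,dt}\cup\CCc{C}_0^{uw,dt}\cup\CCc{C}_{n+1}^{ut,dw}\cup\CCc{C}_i^{uw,dw},
\]
that is, the two mixed types (wild in only one direction) are swapped out for the corresponding elements of the \emph{extremal} crosses $\CCc{C}_0$ and $\CCc{C}_{n+1}$. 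A $(uw,dt)$ element of $\CCc{C}_0$ is wild upward, hence transverse to every $\hs h_j$ with $j\ge 1$; dually a $(ut,dw)$ element of $\CCc{C}_{n+1}$ is transverse to every $\hs h_j$ with $j\le n$. So these replaced slots become (constant, hence regularly ordered) \emph{unbounded} subchains, the $(ut,dt)$ pieces are exactly \emph{bounded}, the $(uw,dw)$ pieces are \emph{unbounded}, and the intermediate behaviours you worried about simply never arise. One then checks (using regular increasing order and tameness/wildness, as in Lemma~\ref{vertical horizontal trick}) that each $\CCc{D}_i$ is still a cross and still an intercross for $(\hs h_i,\hs k_i)$. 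This is the idea your proposal is missing; without it the growing-window case cannot be closed.
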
	

\begin{proof}
	Since $\chainofhyps{h}{k}{1}{N}$ is a ladder and $\chainofcrosses{C}{1}{N}$ are \intcs, it follows that $\chainofcrosses{C}{1}{N}$ are weakly tame with respect to $\chainofhyps{h}{k}{1}{N}$. Lemma \ref{reductionforcrosses} applied twice for the two orientations of the interval, gives a subsequence of crosses, which by abuse of notation we denote by $\chainofcrosses{C}{0}{n+1}$, in which every halfspaces is one of the 4 possible options of being tame/wild in the two directions. 
	Let us denote the partition of each cross into the 4 categories by $\CCc{C}_i^{ut,dt},\CCc{C}_i^{uw,dt},\CCc{C}_i^{ut,dw},\CCc{C}_i^{uw,dw}$, where the letters stand for \underline{u}p, \underline{d}own, \underline{t}ame and \underline{w}ild.
	For $i=1,\ldots,n$ form the crosses $\CCc{D}_i$ by \[\CCc{D}_i = \CCc{C}_i^{ut,dt}\cup\CCc{C}_0^{uw,dt}\cup\CCc{C}_{n+2}^{ut,dw}\cup\CCc{C}_i^{uw,dw}.\]
	It is easy to verify that the sets $\CCc{D}_i$  are \intcs and that they have the desired property with respect to $\chainofhyps{h}{k}{1}{n}$.
\end{proof}

\begin{lemma}\label{lemma2}
	Let $\CCv{x},\CCv{y}_1,\CCv{y}_2$ be three vertices, let $\CCv{m}$ be their median, and let $\Int{I}_i$, $i=1,2$, be the interval spanned between $\CCv{x}$ and $\CCv{y}_i$. There exists a constant $C$ depending only on the dimension such that at most $C$ pairs of adjacent halfspaces which separate $\CCv{x}$ and $\CCv{m}$, are \adjP in $\Int{I}_1$ but not in $\Int{I}_2$.
	
	The same statement is also true for the intervals $\Int{I}'_i =[\CCv{y}_i, \CCv{x}]$.
\end{lemma}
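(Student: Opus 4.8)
The plan is to argue by contradiction, exploiting the locally-parallel structure in $\Int{I}_1$ together with the failure of local parallelism in $\Int{I}_2$, and then to transport intercrosses between the two intervals using the fact that $\Int{I}_1\cap\Int{I}_2=[\CCv{x},\CCv{m}]$. Suppose no such bound $C$ exists; then for every $C$ we find a large family of adjacent pairs $(\hs{h}_i,\hs{k}_i)$ separating $\CCv{x}$ from $\CCv{m}$ which are \adjP in $\Int{I}_1$ but not in $\Int{I}_2$. Since all these pairs separate $\CCv{x}$ from $\CCv{m}$, and $\CCv{m}$ is a single point of $\Int{I}_1$, Corollary \ref{lemma1} already bounds the number of \adjP pairs separated by $\CCv{m}$ in $\Int{I}_1$ --- so the point is that here we are only using \adjP in $\Int{I}_1$ to produce intercrosses, and failure in $\Int{I}_2$ to produce a bigger one. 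First I would apply Lemma \ref{reductionforpairs}: since every pair separates $\CCv{x}$ from $\CCv{m}$, the reduced increasing chain cannot be a ladder (a ladder has $\hs{k}_i<\hs{h}_{i+1}$, so consecutive pairs are not all separated by the same median point --- more precisely, $\CCv{m}$ lies above $\hs{h}_i$ and below $\comp{\hs{k}_i}$ for all $i$, which forces the staircase pattern), hence we may assume $(\hs{h}_1,\hs{k}_1),\dots,(\hs{h}_C,\hs{k}_C)$ is a staircase in the ambient complex.

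Next, for each $i$ the pair $(\hs{h}_i,\hs{k}_i)$ is \emph{not} \adjP in $\Int{I}_2$, so by definition there is an adjacent pair $\hs{h}_i<\hs{k}'_i$ in $\Int{I}_2$ admitting an intercross $\CCc{C}_i$ of strictly larger dimension than every intercross for $(\hs{h}_i,\hs{k}_i)$ in $\Int{I}_2$. The key observation is that $\hs{h}_i$ separates $\CCv{x}$ from $\CCv{m}$, hence lies in $[\CCv{x},\CCv{m}]\subseteq\Int{I}_1\cap\Int{I}_2$, so halfspaces adjacent to and above $\hs{h}_i$ in $\Int{I}_1$ and in $\Int{I}_2$ that lie below $\CCv{m}$ coincide; and any intercross built from halfspaces transverse to $\hs{h}_i$ and sitting below $\CCv{m}$ is an intercross in \emph{both} intervals. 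I would therefore argue: either a maximal-dimension intercross for $(\hs{h}_i,\hs{k}_i)$ in $\Int{I}_1$ can be pushed into $\Int{I}_2$ (using that $\CCv{m}$ separates, so the relevant halfspaces are shared), contradicting \adjP in $\Int{I}_1$ once we produce the strictly larger $\CCc{C}_i$; or the extra halfspace witnessing failure in $\Int{I}_2$ is transverse to $\hs{h}_i$ and below $\CCv{m}$, hence already visible in $\Int{I}_1$. This is the step where I expect the bookkeeping to be delicate: one must check that the "extra" dimension gained in $\Int{I}_2$ is not an artifact of halfspaces lying strictly between $\CCv{m}$ and $\CCv{y}_2$ --- the staircase geometry and the fact that $\CCv{m}$ is the median should rule this out, because a halfspace separating $\CCv{m}$ from $\CCv{y}_2$ cannot separate $\CCv{x}$ from $\CCv{m}$ and hence interacts with the $\hs{h}_i$'s only by crossing.

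Having set this up, I would run the same staircase-escalation machinery as in the proof of Lemma \ref{lemma1staircase}: apply Lemma \ref{reductionforcrosses} to make the sequence of intercrosses $\{\CCc{C}_i\}$ regularly increasing with tame-or-wild subchains, rule out the wild case by the usual trick ($\CCc{C}_i\cup\{\hs{h}_{i+2}\}$ would be an intercross of dimension $p+1$ for $(\hs{h}_{i+1},\hs{k}_{i+1})$), and in the tame case invoke Lemma \ref{vertical horizontal trick} to fabricate, from the larger-dimensional intercrosses in $\Int{I}_2$, an intercross of dimension $>p$ for some pair $(\hs{h}_i,\hs{k}_i)$ living inside $[\CCv{x},\CCv{m}]$, hence inside $\Int{I}_1$ --- contradicting that $(\hs{h}_i,\hs{k}_i)$ is \adjP in $\Int{I}_1$. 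The main obstacle, as indicated, is the transfer lemma between $\Int{I}_1$ and $\Int{I}_2$ across the common sub-interval $[\CCv{x},\CCv{m}]$: one needs that comparability and transversality relations among halfspaces that separate $\CCv{x}$ from $\CCv{m}$ are the same in both intervals, and that a witness to failure of \adjP in $\Int{I}_2$ can always be taken within this shared region. Finally, the statement for $\Int{I}'_i=[\CCv{y}_i,\CCv{x}]$ follows by the same argument applied to the reversed orientation of the intervals, noting that $\CCv{m}$ is still the median and the role of "above/below" is simply exchanged.

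Elaborating the hard step: the cleanest route is to prove a small auxiliary claim --- if $(\hs{h},\hs{k})$ is an adjacent pair separating $\CCv{x}$ from $\CCv{m}$, then $(\hs{h},\hs{k})$ is \adjP in $\Int{I}_1$ if and only if it is \adjP in $[\CCv{x},\CCv{m}]$, and similarly for $\Int{I}_2$. Indeed, halfspaces adjacent to and above $\hs{h}$ in $\Int{I}_1$ that are $\leq\hs{h}_{i+1}\leq\comp{\hs{k}_i}$-side of $\CCv{m}$ are exactly those appearing in $[\CCv{x},\CCv{m}]$, and any intercross for such a pair consists of halfspaces crossing $\hs{h}$ (hence separating $\CCv{x}$ from $\CCv{m}$ is not required of them, but they may be chosen inside $[\CCv{x},\CCv{m}]$ after intersecting with that sub-interval without decreasing dimension, by the lattice structure of crosses via meets). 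Granting this claim, \adjP in $\Int{I}_1$ and not in $\Int{I}_2$ becomes \adjP in $[\CCv{x},\CCv{m}]$ and not in $[\CCv{x},\CCv{m}]$, an outright contradiction --- so in fact no such pair exists and one may take $C$ as small as the constant from Corollary \ref{lemma1}, or even $C=0$. If the claim is too strong as stated (because "adjacent above $\hs{h}$ in $\Int{I}_1$" may include halfspaces straddling $\CCv{m}$), one falls back on the contradiction argument of the previous paragraph, where the straddling halfspaces are handled by the staircase reduction. I expect the final write-up to take the auxiliary-claim route if it holds, and the Lemma \ref{lemma1staircase}-style escalation otherwise.
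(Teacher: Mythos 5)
There is a genuine misreading of the hypothesis that derails the argument. You read ``pairs which separate $\CCv{x}$ and $\CCv{m}$'' as if it were the hypothesis of Corollary~\ref{lemma1} (``separated by $\CCv{m}$'', i.e.\ $\CCv{m}$ lying between $\hs{h}_i$ and $\hs{k}_i$) and conclude that the pairs must form a staircase. The lemma's actual hypothesis is that both $\hs{h}_i$ and $\hs{k}_i}$ lie in the common subinterval $[\CCv{x},\CCv{m}]$, which is perfectly compatible with a ladder $\hs{h}_1<\hs{k}_1<\hs{h}_2<\cdots$ sitting entirely below $\CCv{m}$. The paper's proof goes the opposite way: after Lemma~\ref{reductionforpairs}, the staircase possibility is \emph{eliminated} by Lemma~\ref{lemma1staircase} (a staircase of pairs that are locally parallel in $\Int{I}_1$ is already bounded), and the remaining --- and only nontrivial --- case is the ladder, which your proposal never engages with. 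Note also that your fallback, ``run the Lemma~\ref{lemma1staircase}-style escalation on the staircase'', is circular: if the pairs really did form a staircase of pairs locally parallel in $\Int{I}_1$, then Lemma~\ref{lemma1staircase} already finishes the proof, so there would be nothing left to escalate.

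The ladder case needs a different ingredient, namely Lemma~\ref{lemmaforlemma2}, which splits intercrosses into \emph{bounded} halfspaces (lying between consecutive $\hs{h}_i$, hence inside $[\CCv{x},\CCv{m}]$ and transferable between $\Int{I}_1$ and $\Int{I}_2$) and \emph{unbounded} ones (crossing all the $\hs{h}_j$, visible in only one of the two intervals). One then swaps the unbounded parts: $\CCc{C}^b_i\cup\CCc{D}^{ub}_i$ is an \intc for $(\hs{h}_i,\hs{t}_i)$ in $\Int{I}_1$ and $\CCc{C}^{ub}_i\cup\CCc{D}^{b}_i$ is one for $(\hs{h}_i,\hs{k}_i)$ in $\Int{I}_2$, and adding the two resulting dimension inequalities (from local parallelism in $\Int{I}_1$, from its failure in $\Int{I}_2$) yields the contradiction. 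Your ``auxiliary claim'' --- that local parallelism for a pair in $[\CCv{x},\CCv{m}]$ can be tested in $[\CCv{x},\CCv{m}]$ alone --- is false for exactly this reason: the unbounded halfspaces straddling $\CCv{m}$ are what differ between the two intervals and what can flip local parallelism, and bounding how often this flipping occurs is the whole content of the lemma. Had the auxiliary claim held, the lemma would be vacuously true with $C=0$ and would not need a proof of its own.
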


\begin{proof}
	By contradiction, let $(\hs{h}_1, \hs{k}_1)\dots (\hs{h}_C, \hs{k}_C)$ be such pairs. By Lemma \ref{reductionforpairs} we can assume that it forms a staircase or a ladder, and by 
	Lemma \ref{lemma1staircase} we can assume that it is a ladder. %$\hs{k}_i < \hs{h}_i$. \todo{call it strongly nicely ordered, or something}
	
	Let $\CCc{C}_i$ be the \intc of maximal dimension in $(\hs{h}_i,\hs{k}_i)$ in the interval $\Int{I}_2$. 
	By assumption, for every $i$ there exists a locally parallel halfspace $\hs{t}_i$ for $\hs{h}_i$ such that all the \intcs in $(\hs{h}_i,\hs{t}_i)$ in $\Int{I}_2$ have strictly smaller dimension than that of $\CCc{C}_i$.
	By Lemmas \ref{reductionforpairs} and \ref{lemma1staircase} we may assume that $(\hs{h}_1,\hs{t}_1)\ldots,(\hs{h}_n,\hs{t}_n)$ is a ladder
	and in particular separate $\CCv{x}$ and $\CCv{m}$. 
	This implies that they can be considered as halfspaces in $\Int{I}_1$ as well.
	Let $\CCc{D}_i$ be an \intc of maximal dimension for $(\hs{h}_i,\hs{t}_i)$ in the interval $\Int{I}_1$.
	
	Apply Lemma \ref{lemmaforlemma2}, for both $\CCc{C}_i$ and $\CCc{D}_i$. Denote by $\CCc{C}^b_i,\CCc{C}^{ub}_i$ (resp. $\CCc{D}^b_i,\CCc{D}^{ub}_i$) the \underline{b}ounded and \underline{u}n\underline{b}ounded halfspaces of $\CCc{C}_i$ (resp. $\CCc{D}_i$). Note that $\CCc{C}^b_i\cup \CCc{D}^{ub}_i$ (resp. $\CCc{C}^{ub}_i\cup\CCc{D}^{b}_i$) is an \intc for $(\hs{h}_i,\hs{t}_i)$ in $\Int{I}_1$ (resp. \intc for for $(\hs{h}_i,\hs{k}_i)$ in $\Int{I}_2$).
	Thus by assumption \[\#\CCc{C}^b_i+\#\CCc{C}^{ub}_i = \#\CCc{C}_i > \#\CCc{C}^{ub}_i + \#\CCc{D}^{b}_i.\] On the other hand, since $(\hs{h}_i,\hs{k}_i)$ is \adjP, \[\#\CCc{D}^b_i+\#\CCc{D}^{ub}_i=\#\CCc{D}_i \ge \#\CCc{C}^b_i + \#\CCc{D}^{ub}_i.\] Adding these two inequalities gives a contradiction.
	
	For the intervals $\Int{I}'_i$, the claim follows easily since if $(\hs{h},\hs{k})$ are locally parallel in $\Int{I}'_1$ then they must be locally parallel in $\Int{I}'_2$. This is because any halfspace which is greater than $\hs{h}$ in one of the intervals then it also belongs to the other interval.
\end{proof}
% !TeX root = main.tex
\section{Proof of the main theorem}
We follow the proof of Theorem A' in \cite{BeLa16}.

\begin{proof}[Proof of Theorem \ref{main result}]
Let $\usimp{K}$ be the universal cover of $\simp{K}$ and $\uptrn{P}$ the pattern on $\usimp{K}$ associated to $\ptrn{P}$. Since $\ptrn P$ is a $d$-pattern, the \CCC $\CC{X}$ is a $d$ dimensional cube complex.

For a vertex $\usimpv{x}$ in $\usimp{K}$ call $\CCv{\bar{x}}$ the corresponding vertex in $\CC{X}$. Similarly the halfspaces corresponding to a track $\utrk{t}$ in $\uptrn{P}$ are called $\hs{h}_{\utrk{t}}$ and $\comp{\hs{h}}_{\utrk{t}}$. A triangle in $\CC{X}$ is a triplets of vertices $(\CCv{\bar{x}}, \CCv{\bar{y}}, \CCv{\bar{z}})$  coming from a triangle $(\usimpv{x},\usimpv{y},\usimpv{z})$ of $\usimp{K}$.

Two tracks $\utrk{t}$ and $\utrk{t}'$ of $\uptrn P$ are \emph{\adjP}  if they cross an edge $[\usimpv{x},\usimp{y}]$ such that $\hs{h}_{\utrk{t}}$ and $\hs{h}_{\utrk{t}'}$ are \adjP in one of the oriented interval defined by $\CCv{\bar{x}}$ and $\CCv{\bar{y}}$.

Note that if two halfspaces $(\hs{h}, \hs{k})$ in $\CC{X}$ are not parallel but intersect an interval $[\CCv{\bar{v}}, \CCv{\bar{w}}]$ in which they are \adjP, then:
\begin{enumerate}
\item  either there exists some triangle $(\CCv{\bar{x}}, \CCv{\bar{y}}, \CCv{\bar{z}})$ such that $(\hs{h}, \hs{k})$ is \adjP in  $[\CCv{\bar{x}}, \CCv{\bar{y}}]$ but is separated by the midpoint of $(\CCv{\bar{x}}, \CCv{\bar{y}}, \CCv{\bar{z}})$,
\item or  there exists some triangle $(\CCv{\bar{x}}, \CCv{\bar{y}}, \CCv{\bar{z}})$ such that $(\hs{h}, \hs{k})$ is \adjP in  $[\CCv{\bar{x}}, \CCv{\bar{y}}]$, intersects $[\CCv{\bar{x}}, \CCv{\bar{z}}]$ but is not \adjP in it.
\end{enumerate}

If there are no parallel tracks in $\ptrn P$, a halfspace $\hs h$ in $\CC{X}$  belongs to one of the following categories that can be bounded.

\begin{enumerate}
\item \label{enum1} The halfspace $\hs h$ is associated to a track belonging to an edge of $K$ which is not in a triangle. Two tracks of this form on the same edge are parallel, therefore on each edge there is at most one track $\trk{t}$, associated to two halfspaces $\hs{h}_{\trk{t}}$ and $\comp{\hs{h}}_{\trk{t}}$.
\item \label{enum2} The halfspace $\hs h$ belongs to an interval $[\CCv{\bar{x}}, \CCv{\bar{y}}]$ and is maximal in it. For each directed interval there are at most $d$ maximal halfspaces, and thus at most $2 d$ per edge.  Note that this case contains the previous one.
\item  \label{enum3} There exist some halfspace $\hs k$ and some triangle $(\CCv{\bar{x}}, \CCv{\bar{y}}, \CCv{\bar{z}})$ such that $(\hs{h}, \hs{k})$ is \adjP in  $[\CCv{\bar{x}}, \CCv{\bar{y}}]$ but is separated by the midpoint of $(\CCv{\bar{x}}, \CCv{\bar{y}}, \CCv{\bar{z}})$. By lemma \ref{lemma1} each triangle and directed interval $\Int{I}$ defined by an edge of the triangle, there is a bound $C_1$ of pairs of \adjP halfspaces in $\Int{I}$ separated by the midpoint of the triangle. There are $6$ directed intervals associated to each triangle.
\item \label{enum4} There exist some halfspace $\hs k$ and some triangle $(\CCv{\bar{x}}, \CCv{\bar{y}}, \CCv{\bar{z}})$ such that $(\hs{h},\hs{k})$ is \adjP in  $[\CCv{\bar{x}}, \CCv{\bar{y}}]$, intersects $[\CCv{\bar{x}}, \CCv{\bar{z}}]$ but is not \adjP in it. By lemma \ref{lemma2}, for each triangle and each pair of intervals $[\CCv{\bar{x}}, \CCv{\bar{y}}]$ and $[\CCv{\bar{x}}, \CCv{\bar{z}}]$ there is a bound $C_2$ of pairs of halfspaces   that intersect both $[\CCv{\bar{x}}, \CCv{\bar{y}}]$ and $[\CCv{\bar{x}}, \CCv{\bar{z}}]$, \adjP in the first one but not the second one. There are $6$ choices of such a pair of intervals per triangle.
\item \label{enum5} There exist some halfspace $\hs k$ and some triangle $(\CCv{\bar{x}}, \CCv{\bar{y}}, \CCv{\bar{z}})$ such that $(\hs{h},\hs{k})$ is \adjP in  $[\CCv{\bar{y}}, \CCv{\bar{x}}]$, intersects $[\CCv{\bar{z}}, \CCv{\bar{x}}]$ but is not \adjP in it. By lemma \ref{lemma2}, given a triangle and a pair of intervals $[\CCv{\bar{x}}, \CCv{\bar{y}}]$ and $[\CCv{\bar{x}}, \CCv{\bar{z}}]$ there are no pair of halfspaces  that intersects both $[\CCv{\bar{y}}, \CCv{\bar{x}}]$ and $[\CCv{\bar{y}}, \CCv{\bar{x}}]$, \adjP in the first one but not the second one.
\end{enumerate}

 If we denote by $E$ and $T$ the number of edges and triangles in $\simp K$, then there are at most $2dE +(6C_1+6C_2)T$ non parallel halfspaces  in $\CC{X}$.
\end{proof}
% !TeX root = main.tex

\section{Cubical acylindricity}

%For $k\in\N$ and a class of subgroups $\mathcal{C}$ which is closed under conjugation and subgroups, we say that the $G$ action on the CAT(0) cube complex $\CC{X}$ is \emph{$(k,\mathcal{C})$-acylindrical} if the common stabilizer of any $k$-chain of hyperplanes is in $\mathcal {C}$.\todo{remove}
% This notion should not be confused with acylindrical actions on metric spaces, even though there is a resemblance between the two when considering actions on trees.

%Now that we have proved Theorem \ref{main result} we turn to the proof of Theorem \ref{acylindrical accessibility}.\todo{write something better}

Using Theorem \ref{main result} and following the proof of Theorem 1 in \cite{Del99}, we prove Theorem \ref{acylindrical accessibility}.

\begin{proof}[Proof of Theorem \ref{acylindrical accessibility}]
	Let $\simp{K}$ be a presentation complex for $G$, so that $\pi_1 (\simp{K})=G$.
	Let $\CC{X}$ be a $d$-dimensional CAT(0) cube complex on which $G$ acts $(k,\mathcal{C})$-acylindrically on hyperplanes. 
	Pullback the hyperplanes of $\CC{X}$ to get a $d$-pattern $\ptrn{P}$ on $\simp{K}$ (see construction in Section \ref{resolutions}).
	Every hyperplane of $\CC{X}$ has at least one track in its pullback which is $G$-essential in the induced CAT(0) cube complex.
	Remove all non-$G$-essential tracks from the pattern.
	
	Let $R=R(k,d)$ be as in Observation \ref{Ramsey}, and let $C=C(K,d)$ be as in Theorem \ref{main result}.
	By the pigeon hole principle, if $\ptrn{P}$ has more than $R\cdot C$ tracks, then there are $R$ tracks which belong to the same parallelism class, and hence $k$ of them correspond to a chain in $\CC{X}$. 
	Let $\trk{t}$ be a track in this parallelism class.
	
	Since any element that stabilizes the hyperplane defined by $\trk{t}$ also stabilizes the set of tracks in the parallelism class of $\trk{t}$. 
	Thus, up to passing to a finite index subgroup it stabilizes each of the tracks in the parallelism class, and hence in the common stabilizer of the corresponding hyperplanes in $\CC{X}$.
	By the $(k,\mathcal{C})$-acylindricity on hyperplanes of the action, the stabilizer of $\trk{t}$ is in $\mathcal {C}$ since it stabilizes a chain of $k$ hyperplanes in $\CC{X}$. 
	The hyperplane defined by this track alone gives a $d$-pattern on $G$, which, by Proposition 3.2 of \cite{CaSa11} induces an essential $G$-action on a $d$-dimensional CAT(0) cube complex whose hyperplane stabilizers are in $\mathcal {C}$.
	Contradicting the assumption on $G$.
\end{proof}

\begin{proposition}\label{cube complexes to trees}
	Let $G$ be a finitely presented group. 
	\begin{enumerate}
	\item \label{one end implies no CCC over finite} If $G$ acts on finite dimensional \CCC with finite hyperplane stabilizers. Then either $G$ fixes a point or has more than one end.
	\item \label{Z CCC implies Z tree}
	If $G$ is moreover one-ended hyperbolic group and is not a triangle group. If $G$ acts on finite dimensional \CCC with virtually cyclic hyperplane stabilizers, then either $G$ fixes a point or $G$ splits over a cyclic group.
	\end{enumerate}
\end{proposition}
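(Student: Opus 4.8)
The plan is to handle both items by the same mechanism: if $G$ does not fix a point in $\CC{X}$, extract from the action a small codimension‑one subgroup and feed it into classical splitting and ends theorems. I would prove item~(1) first and then use it inside the proof of item~(2).

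\emph{Item~(1).} Suppose $G$ fixes no point of $\CC{X}$. The cube metric on the finite‑dimensional complex $\CC{X}$ is complete CAT(0), so a bounded orbit would have a circumcentre fixed by $G$; hence all orbits are unbounded and the essential core $\CC{Y}$ of the action (Caprace--Sageev \cite{CaSa11}) is a non‑empty \CCC on which $G$ acts essentially. Every hyperplane of $\CC{Y}$ is a hyperplane of $\CC{X}$, so it has finite $G$‑stabiliser. Fix a hyperplane $\hyp h$ of $\CC{Y}$, a bounding halfspace $\hs h$, a base vertex $\CCv v\in\CC{Y}$ and a finite generating set $S$ of $G$, and put $H=\Stab(\hs h)$ (finite, of index at most two in $\Stab(\hyp h)$) and $A=\{g\in G: g\CCv v\in\hs h\}$. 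I claim $H$ is a codimension‑one subgroup of $G$ in the sense of Sageev \cite{Sag12}: the set $A$ is left‑$H$‑invariant; for each $s\in S$, an edge $(g,gs)$ of the Cayley graph of $G$ lies in the coboundary of $A$ only if some translate of $\hyp h$ separates $\CCv v$ from $s\CCv v$, and there are finitely many such translates, forming finitely many left‑$H$‑cosets, so the coboundary of $A$ is $H$‑finite; while essentiality of the action, together with the finiteness of $H$, forces $A$ and $G\setminus A$ to be $H$‑infinite. Thus $G$ contains a finite codimension‑one subgroup, so by Stallings' theorem $G$ splits non‑trivially over a finite subgroup; in particular $G$ has more than one end.

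\emph{Item~(2).} Run the same reduction: as $G$ fixes no point, it acts essentially on a non‑empty \CCC $\CC{Y}$ with virtually cyclic hyperplane stabilisers, and — exactly as in~(1) — every hyperplane stabiliser of $\CC{Y}$ is codimension‑one in $G$. If one were finite, the argument of~(1) would give $G$ more than one end, contradicting one‑endedness; so every hyperplane stabiliser of $\CC{Y}$ is infinite, hence two‑ended, hence quasiconvex in the hyperbolic group $G$. Fix a hyperplane $\hyp h$ and let $H=\Stab(\hs h)$ for a bounding halfspace $\hs h$; then $H$ is two‑ended, quasiconvex and codimension‑one in $G$. Its limit set is a two‑point subset of $\partial G$, and since $H$ is quasiconvex and codimension‑one this pair topologically separates $\partial G$, so $\partial G$ has a local cut point. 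By Bowditch's theorem on cut points and canonical splittings of one‑ended hyperbolic groups, either $G$ splits over a two‑ended (hence virtually cyclic) subgroup, or $G$ is a cocompact Fuchsian group, i.e.\ virtually the fundamental group of a closed hyperbolic $2$‑orbifold. In the latter case $G$ is not a triangle group by hypothesis, so the underlying orbifold carries an essential simple closed curve, and cutting along it exhibits $G$ as an amalgamated product or HNN extension over an infinite cyclic subgroup. In all cases $G$ splits over a cyclic subgroup.

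I expect the routine parts to be the two reductions to the essential core and the bookkeeping identifying $A$ as having $H$‑finite coboundary. The substantial ingredients are external and would be cited rather than reproved: for~(1), that a finite codimension‑one subgroup detects more than one end (Stallings; Scott--Wall); for~(2), the quasiconvexity of two‑ended subgroups of hyperbolic groups, the fact that the limit set of a codimension‑one quasiconvex subgroup topologically separates the boundary, and Bowditch's cut‑point theorem, together with the elementary but case‑by‑case check that among closed hyperbolic $2$‑orbifolds exactly the triangle ones carry no essential simple closed curve (this last point is what pins down the exceptional class to the triangle groups named in the statement; note also that Bowditch's theorem literally yields a two‑ended edge group, so ``cyclic'' there is to be read as ``virtually cyclic'', or refined by the standard argument). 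The step I expect to be the real obstacle is making ``essential action $\Rightarrow$ codimension‑one hyperplane stabiliser'' airtight in the absence of cocompactness — which is exactly why everything is routed through the essential core and a fixed base vertex, so that the coboundary estimate uses only that finitely many translates of a hyperplane can separate two given vertices.
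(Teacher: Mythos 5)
Your proof takes a genuinely different route from the paper's. The paper passes through a \emph{resolution}: it pulls the hyperplanes of $\CC{X}$ back to a finite presentation complex $\simp{K}$ for $G$, so that $G$ acts cocompactly on the resulting pattern, and then argues on tracks — for item~(1), compact essential tracks give a finite separating set in $\usimp{K}$; for item~(2), infinite virtually cyclic track stabilizers are quasiconvex and give a separating pair in $\partial G$, after which Bowditch applies. You instead stay inside the cube complex, pass to the Caprace--Sageev essential core $\CC{Y}$, and exhibit $H=\Stab(\hs{h})$ as a codimension-one subgroup of $G$ via Sageev's $A$-set criterion, then feed this into ends theory for~(1) and into the Swarup/Bowditch separation and cut-point machinery for~(2). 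Your route is the more intrinsic one (no pattern/resolution formalism), and you are commendably more explicit than the paper in two places the paper glosses: the reduction of the Fuchsian exceptional case to triangle groups, and the fact that Bowditch's theorem yields a two-ended (not literally cyclic) edge group. The paper's route via resolutions is shorter \emph{in context}, because cocompactness on tracks is free once that machinery is set up, and both routes lean on Caprace--Sageev's Proposition~3.5 and on Bowditch in the end.

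There is, however, a real gap in your write-up of the codimension-one step. To get $e(G,H)\geq 2$ you need $A=\{g\in G: g\CCv{v}\in\hs{h}\}$ and $G\setminus A$ to be $H$-infinite, not merely infinite. In item~(1) you justify this by ``essentiality of the action, together with the finiteness of $H$''. But in item~(2) the group $H$ is infinite two-ended, and you simply say the codimension-one claim holds ``exactly as in~(1)'' — it does not, because the finiteness of $H$ was load-bearing in your stated justification. The argument that actually works uniformly is: the combinatorial distance to $\hyp{h}$ is $H$-invariant (since $H$ stabilizes $\hyp{h}$), hence constant on each $H$-orbit of vertices; if $A$ were $H$-finite, the orbit $G\CCv{v}$ would meet $\hs{h}$ only at bounded distance from $\hyp{h}$, contradicting $G$-essentiality of $\hs{h}$ in $\CC{Y}$. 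Replace the ``finiteness of $H$'' clause with this observation and both items go through; as written, item~(2) has a missing step.
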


\begin{proof}
	Let $K$ be a the presentation complex of $G$.
	Let $\uptrn{P}$ be the pattern obtained by a pullback of the hyperplanes of the \CCC on which $G$ acts, and let $\CC{X}'$ be the induced cube complex.
	There are only finitely many orbits of hyperplanes in $\CC{X}'$.
	By Proposition 3.5 in \cite{CaSa11}, we may assume that the action is also essential by removing the non-essential tracks. 
	As always for finitely presented, $G$ acts cocompactly on the tracks of the pattern $\ptrn{P}$. 
	
	In the setting of \ref{one end implies no CCC over finite}, the tracks are essential and finite, proving that $G$ has more than one end.

	To prove \ref{Z CCC implies Z tree}, note that 
	by \ref{one end implies no CCC over finite}, either $G$ fixes a vertex of the resolution, and hence in the original action, or the track stabilizers are infinite virtually cyclic subgroups.
	In this case, since each track separates $\uc{K}$ to two essential components, and any virtually cyclic group is quasiconvex, we obtain a separating pair of points at the boundary.
	By Theorem 6.2 of \cite{Bow98}, this implies that $G$ splits over a virtually cyclic group.
\end{proof}

We finish this section by showing that acylindrical on hyperplanes actions on cube complexes and hyperbolic cubulations are the same for geometric actions.

\begin{proposition}\label{hyperbolicity acylindricity}
	Let $G$ be a group acting properly, cocompactly  on a CAT(0) cube complex $\CC{X}$. Then, $G$ is hyperbolic if and only if $G$ acts $(k,\mathcal F)$-acylindrically on hyperplanes, for some $k\in\N$.
\end{proposition}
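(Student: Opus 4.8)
The plan is to prove both directions of Proposition~\ref{hyperbolicity acylindricity} using the structure theory of CAT(0) cube complexes on which a group acts properly and cocompactly.

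\textbf{The easy direction.} Suppose $G$ acts $(k,\mathcal F)$-acylindrically on hyperplanes for some $k$. I would argue by contradiction: if $G$ is not hyperbolic, then by Hagen's criterion (or equivalently by the flat plane theorem together with the combinatorial structure), $\CC{X}$ contains an isometrically embedded flat plane, and by cocompactness one can promote this to a periodic flat, i.e.\ a $\Z^2$-subgroup $A\le G$ acting on a combinatorial flat $F\subseteq\CC{X}$. Inside such a flat there are two transverse families of hyperplanes, and for each direction there are arbitrarily long chains of pairwise parallel hyperplanes whose common stabilizer contains an infinite cyclic subgroup of $A$ (the translations along the other direction). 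In particular, for any $k$, one finds a chain of $k$ halfspaces whose common stabilizer is infinite, contradicting $(k,\mathcal F)$-acylindricity. Alternatively one can invoke directly the characterization: a cocompactly cubulated group is hyperbolic iff it contains no $\Z^2$ (Sageev--Wise / Caprace--Sageev), and a $\Z^2$ manifestly fails acylindricity on hyperplanes. This direction is essentially standard.

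\textbf{The hard direction.} Suppose $G$ is hyperbolic. I want to produce $k$ such that the common stabilizer of any chain of $k$ halfspaces is finite. First, since the action is proper and cocompact, hyperplane stabilizers are quasiconvex (hyperplanes are convex subcomplexes, and their stabilizers act cocompactly on them), hence themselves hyperbolic, and the carrier of a hyperplane is quasi-isometrically embedded. The key point is a linear-growth estimate: if $\hs{h}_1\supsetneq\dots\supsetneq\hs{h}_k$ is a chain of halfspaces, then the hyperplanes $\hyp{h}_1,\dots,\hyp{h}_k$ are pairwise disjoint and ``stacked'', so the distance between the carrier of $\hyp{h}_1$ and the carrier of $\hyp{h}_k$ grows at least linearly in $k$. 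Any element $g$ in the common stabilizer of all $\hs{h}_i$ fixes (setwise) each hyperplane in the chain, hence coarsely fixes a subset of diameter $\ge c\cdot k$ for some $c>0$ depending only on $\CC{X}$. A hyperbolic group acting properly and cocompactly on $\CC{X}$ has the property that the stabilizer of a set of diameter $D$ has cardinality bounded by a function of $D$ (more precisely: in a proper cocompact action of a hyperbolic group, a group element moving a point a bounded distance lies in a finite set, and here one gets that the pointwise-coarse stabilizer of a large-diameter quasiconvex set is finite once the diameter exceeds a threshold — this uses that $\mathrm{Stab}(\hyp{h}_1)\cap\mathrm{Stab}(\hyp{h}_k)$ is contained in a coarse intersection of two quasiconvex subsets that are far apart, and far-apart quasiconvex subsets of a hyperbolic space have uniformly bounded coarse intersection). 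Choosing $k$ so that $c\cdot k$ exceeds this threshold forces the common stabilizer to be finite, giving $(k,\mathcal F)$-acylindricity on hyperplanes.

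\textbf{Main obstacle.} The delicate step is the quantitative claim that the common stabilizer of a long chain of halfspaces is finite — i.e.\ converting ``two quasiconvex subgroups whose minsets are far apart have finite intersection, uniformly'' into the cube-complex statement. I would handle this via the bounded packing / acylindricity properties of quasiconvex subgroups of hyperbolic groups (Gitik--Mitra--Rips--Sageev bounded packing, or the uniform quasiconvexity of carriers), noting that a chain of $k$ halfspaces produces $k$ hyperplane-stabilizers each a distinct coset displacement of a fixed finite family of quasiconvex subgroups, and the common stabilizer injects into the intersection of the first and last, which is finite once $k$ is large by the linear separation of their carriers. The reader can also be referred to Genevois~\cite{Gen16}, where exactly this equivalence (hyperbolicity $\Leftrightarrow$ acylindricity on hyperplanes for geometric cubulations) is established in detail; here I would give the short argument above and cite \cite{Gen16} for the full treatment.
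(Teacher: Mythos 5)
Your proof is correct and, at the high level, follows the same strategy as the paper's: the flat plane theorem for one implication and the geometry of long nested chains of hyperplanes for the other. Where you and the paper part ways is in how a flat is turned into a contradiction with acylindricity. You propose promoting the flat to a \emph{periodic} flat (a $\Z^2$ subgroup acting on a combinatorial flat), or alternatively citing the characterization ``cocompactly cubulated hyperbolic $\Leftrightarrow$ no $\Z^2$'' — both of which rest on Caprace--Sageev rank rigidity / flat closing for cube complexes. The paper avoids this heavier input: from an arbitrary (not necessarily periodic) flat $F$ it extracts, for every $k$, a chain of $k$ hyperplanes crossing $F$ in parallel lines; the carriers of the two outermost ones have unbounded coarse intersection (they both contain a half-plane of $F$ up to bounded distance), and the standard properness-plus-cocompactness argument then forces their common stabilizer to be infinite, contradicting $(k,\mathcal F)$-acylindricity. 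For the other direction — hyperbolic implies acylindrical on hyperplanes — you and the paper are making the same geometric observation in different vocabulary: an infinite common stabilizer of a long chain of nested halfspaces supplies an infinite-order element whose axis lies close to all the hyperplanes in the chain, producing what the paper calls ``an arbitrarily wide strip'' and what you phrase as an unbounded coarse intersection of two far-apart uniformly quasiconvex subsets; either formulation contradicts $\delta$-hyperbolicity once the chain length $k$ is large enough. Both routes are sound; your treatment of the hyperbolic-implies-acylindrical direction is more explicit about the quasiconvexity constants and the bounded-packing input, while the paper's treatment of the converse is more elementary since it sidesteps flat closing entirely.
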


\begin{proof}
	If $G$ is hyperbolic then the cube complex $\CC{X}$ is $\delta$-hyperbolic for some $\delta$, and hence if it is not $(k,\mathcal{F})$-acylindrically on hyperplanes for any $k\in\N$ then one can find an arbitrarily wide strip in $\CC{X}$, contradicting hyperbolicity.
	
	For the converse, by the Corollary of \cite{Bri95}, it suffices to show that there are no flats in $\CC{X}$. Assume $F$ is a 2-dimensional flat in $\CC{X}$. Let $\Hs{H}_F$ be the hyperplanes that are transverse to the flat $F$. There is a chain of hyperplanes in $\Hs{H}_F$ of length $k$ which intersect $F$ in parallel lines. This implies that there are two hyperplanes $\hyp{h},\hyp{k}$ whose common stabilizer is finite but their $R$ neighborhoods have unbounded intersection for some $R>0$. By a standard argument this implies that the common stabilizer is infinite, contradicting the acylindricity on hyperplanes.
	%	Let $\gamma$ be an infinite geodesic in the intersection of the $R$ neighborhoods of $\hyp{h}$ and $\hyp{k}$. The stabilizer of $\hyp{h}$ acts cocompactly on $\hyp{h}$. Let $g_n$ be elements of $\Stab_G(\hyp{h})$ that send $\gamma(0)$ within distance $K$ from $\gamma(n)$. Up to passing to a subsequence, and taking differences (i.e, $g_s' = g_n g_m^{-1)$), there are infinitely many elements in the common stabilizer of $\hyp{h}$ and $\hyp{k}$, contradicting the acylindricity assumption. 
\end{proof}

%The following corollary is an easy corollary from the works of Sela-Rips todo{ref Sela-Rips} and Delzant \todo{ ref Delzant}, that shows that there are only finitely many conjugacy classes of embeddings of a one-ended group into a hyperbolic group.
%However, it also follows easily from Corollary \ref{one ended acyl accessibility} and Proposition \ref{hyperbolicity acylindricity}.
%
%\begin{corollary}
%	Let $G$ be a hyperbolic cubulated group. Let $H$ be a one ended group, then there exists $C$, such that any embedding of $H$ into $G$ has at most $C$ orbits of $H$-essential hyperplanes.	
%\end{corollary}

%\begin{proposition} \label{Z CCC implies Z tree}
%	Let $G$ be a one-ended hyperbolic group. If $G$ acts on finite dimensional \CCC with virtually cyclic hyperplane stabilizers, then either $G$ fixes a point or $G$ splits over a cyclic group.
%\end{proposition}
%
%\begin{proof}
%	Let $K$ be a the presentation complex of $G$.
%	Let $\uptrn{P}$ be the pattern obtained by a pullback of the hyperplanes of the \CCC on which $G$ acts.
%	$G$ acts cocompactly on the tracks of the pattern. By the previous proposition, either $G$ fixes a vertex of the resolution, and hence in the original action, or the track stabilizers are infinite virtually cyclic subgroups.
%	In this case, since each track separates $\uc{K}$ to two components, and any virtually cyclic group is quasiconvex, we obtain a separating pair of points.
%	By Bowditch \todo{ref}, this implies that $G$ splilts over a virtually cyclic group.
%\end{proof}

\bibliographystyle{hplain}
\bibliography{main}

\begin{thebibliography}{10}

\bibitem{AoGa15}
Tarik {Aougab} and Jonah {Gaster}.
\newblock Curves intersecting exactly once and their dual cube complexes.
\newblock {\em ArXiv e-prints}, February 2015,
  \url{https://arxiv.org/abs/1502.00311}.

\bibitem{BeLa16}
Benjamin Beeker and Nir Lazarovich.
\newblock Resolutions of {CAT}(0) cube complexes and accessibility properties.
\newblock {\em Algebr. Geom. Topol.}, 16(4):2045--2065, 2016.

\bibitem{BeLa16b}
Benjamin Beeker and Nir Lazarovich.
\newblock Stallings folds for {CAT(0)} cube complexes and quasiconvex
  subgroups.
\newblock {\em ArXiv e-prints}, May 2016,
  \url{https://arxiv.org/abs/1605.02758}.

\bibitem{Bow98}
Brian~H. Bowditch.
\newblock Cut points and canonical splittings of hyperbolic groups.
\newblock {\em Acta Math.}, 180(2):145--186, 1998.

\bibitem{Bow08}
Brian~H. Bowditch.
\newblock Tight geodesics in the curve complex.
\newblock {\em Invent. Math.}, 171(2):281--300, 2008.

\bibitem{Bri95}
Martin~R. Bridson.
\newblock On the existence of flat planes in spaces of nonpositive curvature.
\newblock {\em Proc. Amer. Math. Soc.}, 123(1):223--235, 1995.

\bibitem{BrHa99}
Martin~R. Bridson and Andr{\'e} Haefliger.
\newblock {\em Metric spaces of non-positive curvature}, volume 319.
\newblock Springer, 1999.

\bibitem{CaSa11}
Pierre-Emmanuel Caprace and Michah Sageev.
\newblock Rank rigidity for {CAT}(0) cube complexes.
\newblock {\em Geom. Funct. Anal.}, 21(4):851--891, 2011.

\bibitem{Del99}
Thomas Delzant.
\newblock Sur l'accessibilit\'e acylindrique des groupes de pr\'esentation
  finie.
\newblock {\em Ann. Inst. Fourier (Grenoble)}, 49(4):1215--1224, 1999.

\bibitem{Dun85}
Martin~J. Dunwoody.
\newblock The accessibility of finitely presented groups.
\newblock {\em Invent. Math.}, 81(3):449--457, 1985.

\bibitem{Gen16}
Anthony {Genevois}.
\newblock Acylindrical action on the hyperplanes of a {CAT(0)} cube complex.
\newblock {\em ArXiv e-prints}, October 2016,
  \url{https://arxiv.org/abs/1610.08759}.

\bibitem{Gro87}
Mikhael Gromov.
\newblock {\em Hyperbolic groups}, volume~8 of {\em Math. Sci. Res. Inst.
  Publ.}
\newblock Springer, New York, 1987.

\bibitem{Hak61}
Wolfgang Haken.
\newblock Ein {V}erfahren zur {A}ufspaltung einer {$3$}-{M}annigfaltigkeit in
  irreduzible {$3$}-{M}annigfaltigkeiten.
\newblock {\em Math. Z.}, 76:427--467, 1961.

\bibitem{Kne29}
Hellmuth Kneser.
\newblock Geschlossene flächen in dreidimensionalen mannigfaltig\-keiten.
\newblock {\em Jahresbericht der Deutschen Mathematiker-Vereinigung},
  38:248--259, 1929.

\bibitem{Mil62}
John Milnor.
\newblock A unique decomposition theorem for {$3$}-manifolds.
\newblock {\em Amer. J. Math.}, 84:1--7, 1962.

\bibitem{NiRe98}
Graham~A. Niblo and Lawrence~D. Reeves.
\newblock The geometry of cube complexes and the complexity of their
  fundamental groups.
\newblock {\em Topology}, 37(3):621--633, 1998.

\bibitem{Prz15}
Piotr Przytycki.
\newblock Arcs intersecting at most once.
\newblock {\em Geom. Funct. Anal.}, 25(2):658--670, 2015.

\bibitem{Rol98}
Martin~A. Roller.
\newblock Poc sets, median algebras and group actions. an extended study of
  dunwoody’s construction and sageev’s theorem.
\newblock Univ. of Southampton, 1998.

\bibitem{Sag95}
Michah Sageev.
\newblock Ends of group pairs and non-positively curved cube complexes.
\newblock {\em Proceedings of the London Mathematical Society}, 3(3):585--617,
  1995.

\bibitem{Sag12}
Michah Sageev.
\newblock {CAT}(0) cube complexes and groups.
\newblock Park City Mathematics Institute, 2012.

\bibitem{Sel97}
Zlil Sela.
\newblock Acylindrical accessibility for groups.
\newblock {\em Invent. Math.}, 129(3):527--565, 1997.

\end{thebibliography}

\end{document}